%% file: main-inv-perm-rev0.tex
\documentclass[12pt]{article}
\usepackage{preprint-layout-light}
\usepackage{preprint-notation}

\DeclareMathOperator*{\argmin}{arg\,min}

\newcommand{\data}{\ensuremath{y_L^\dagger}}
\newcommand{\datah}{\ensuremath{y_{L,h}^\dagger}}

\title{Latent Inversion of Material Coefficients from Boundary Data via Finite Tests and Neural Surrogates}

\date{\today}

\author{
Erik Burman,
Mats G. Larson,
Karl Larsson,
Jonatan Vallin
}
\date{}

\begin{document}

\maketitle

\begin{abstract}
We reconstruct a spatially varying material coefficient in a scalar elliptic equation from finitely many boundary excitations, each producing a full Dirichlet trace. To mitigate the ill-posedness and the cost of repeated PDE solves, we restrict the coefficient to a low-dimensional family, specified analytically or learned from samples, and solve the inverse problem in its latent coordinates. We consider a \(C^1\) parametrization with \(m\) latent coordinates and full-rank derivative at a reference point. If the continuous linearized Neumann-to-Dirichlet map is injective on the corresponding tangent space, at most \(m\) excitations suffice for local injectivity and Lipschitz stability. Convergence of the coefficient sensitivities then transfers this stability to conforming finite element discretizations. For sufficiently fine meshes, the stability constant and neighborhood can be chosen independently of the mesh size. Under a local residual-comparison condition, uniform accuracy of the surrogate forward map yields coefficient-error bounds separating representation error, data noise, finite element error, and surrogate error. Derivative accuracy additionally preserves the surrogate's own local stability. All stability statements are local to a reference coefficient. Two-dimensional numerical experiments combine analytic and learned representations of inclusions and crack-like coefficients with neural forward surrogates. They illustrate latent-space reconstruction, reduced online cost, and further improvement from optional FEM-based refinement.
\end{abstract}

\input{sec-intro-rev0.tex}

\input{sec-method-rev0.tex}

\input{sec-analysis-rev0.tex}

\input{sec-analysis-discrete-rev0.tex}
\input{sec-material-rev0.tex}
\input{sec-numerical-experiments-rev0.tex}
\input{sec-conclusions-rev0.tex}
\input{sec-appendix-rev0.tex}

\bigskip
\paragraph{Acknowledgement.} MGL, KL, and JV were supported in part by the Swedish Research
Council grants Nos.\ 2021-04925, 2025-05562;  Knut and Alice Wallenberg Foundation grant No.\ 
KAW 2025.0277; the Swedish Research Programme Essence.  EB was supported in part by EPSRC grants Nos.\
EP/V050400/1,  EP/T033126/1,  EP/X042650/1.

During the preparation of this manuscript, the authors used OpenAI ChatGPT and Codex with the GPT-5.5 and GPT-5.6 Sol models to assist with drafting and revising text, improving mathematical exposition, checking notation and internal consistency, editing LaTeX, and developing computational code. These tools were not used to fabricate or directly alter research data or numerical results. All AI-assisted material, including mathematical statements and computational code, was critically reviewed and validated by the authors, who take full responsibility for the accuracy, originality, and integrity of the manuscript.

\bibliographystyle{habbrv}
\footnotesize{
\bibliography{perm_biblio}
}

\bigskip
\bigskip
\noindent
\footnotesize {\bf Authors' addresses:}

\smallskip
\noindent
Erik Burman,  \quad \hfill \addressuclshort\\
{\tt e.burman@ucl.ac.uk}

\smallskip
\noindent
Mats G. Larson,  \quad \hfill \addressumushort\\
{\tt mats.larson@umu.se}

\smallskip
\noindent
Karl Larsson, \quad \hfill \addressumushort\\
{\tt karl.larsson@umu.se}

\smallskip
\noindent
Jonatan Vallin, \quad \hfill \addressumushort\\
{\tt jonatan.vallin@umu.se}

\end{document}

%% file: sec-intro-rev0.tex
\section{Introduction}
\label{sec:introduction}

\paragraph{Background.}
Recovering an unknown parameter in a partial differential equation from
measurements is a classical inverse problem. A prominent example is the
Calder\'on problem, in which the conductivity coefficient of a scalar elliptic
equation is reconstructed from boundary measurements. Applications include
medical imaging and the geosciences.

We consider the reconstruction of a coefficient
\(a:\Omega\to\mathbb{R}\) from measurements of the solution to
\begin{equation}
-\nabla \cdot \big( a \nabla u \big) = 0 \quad \text{in } \Omega\label{eq:scalar_elliptic}
\end{equation}
where \(\Omega\subset\mathbb{R}^d\) is a bounded domain. In many applications,
only boundary measurements of \(u\) are available, but even when \(u\) is known
throughout \(\Omega\), the recovery of \(a\) reduces to a linear but ill-posed problem.

In the standard formulation, the data are encoded by the
Dirichlet-to-Neumann map, or equivalently the Neumann-to-Dirichlet (NtD) map \(\mathcal N_a\), which we use below.
More precisely, for each compatible Neumann datum 
\(g_N\in H^{-1/2}(\partial\Omega)\), we assume that the corresponding 
Dirichlet trace \(u|_{\partial \Omega}=\mathcal N_a g_N\) of the solution to 
\eqref{eq:scalar_elliptic} is known.

To formulate the inverse problem, let \(\mathcal A_{\mathrm{ad}}\) denote the
admissible coefficient class and let
\(a^\dagger\in\mathcal A_{\mathrm{ad}}\) be the unknown exact coefficient. The
Calder\'on problem is to recover \(a^\dagger\) from the complete NtD operator
\(\mathcal N_{a^\dagger}\). Under suitable assumptions on the coefficient, the
domain, and the dimension, this operator uniquely determines \(a^\dagger\).
Equivalently, the recovery problem can be written as
\begin{equation}
a^\star\in\argmin_{a\in\mathcal A_{\mathrm{ad}}}
\|\mathcal N_a-\mathcal N_{a^\dagger}\|
\quad\text{where}\quad
\|\mathcal N_a-\mathcal N_{a^\dagger}\|
=
\;\,
\sup_{\mathclap{\substack{
0\ne g_N\\
\langle g_N,1\rangle_{\partial\Omega}=0
}}}
\;\,
\frac{
\|(\mathcal N_a-\mathcal N_{a^\dagger})g_N\|_{H^{1/2}(\partial\Omega)}
}{
\|g_N\|_{H^{-1/2}(\partial\Omega)}
}\label{eq:data_fitting}
\end{equation}
where \(a^\star\) denotes an optimizer, and \(a^\star=a^\dagger\) under the
preceding uniqueness assumptions.

Despite this uniqueness, the reconstruction is severely unstable.
Under appropriate a priori smoothness bounds on the coefficients and
assumptions on the domain and dimension, the discrepancy
\(\|\mathcal N_a-\mathcal N_{a^\dagger}\|\leq\varepsilon\) yields a
conditional logarithmic stability estimate of the form \cite{Ale88}
\begin{equation}
\|a-a^\dagger\|_{L^\infty(\Omega)}\leq C|\log(\varepsilon)|^{-s},
\quad 0<\varepsilon<\varepsilon_0<1
\quad \text{for some }s\in(0,1)\label{eq:introduction-logarithmic-stability}
\end{equation}
The constants depend on the a priori coefficient class. This instability
limits reconstruction over general infinite-dimensional classes, whereas
additional structural restrictions can permit stronger stability estimates.

Related approximation barriers have been rigorously quantified for unique
continuation \cite{BNO25}, which plays a central role in the analysis of the
Calder\'on problem. These results illustrate that discretization and
regularization must contend with the underlying instability rather than
eliminate it. Access to full operator data (the entire NtD map) is a further
practical limitation, since such data are rarely available in applications.
Even when full operator data are
available, solving \eqref{eq:data_fitting} numerically may require many
iterations, each involving repeated PDE solves. These limitations motivate
reductions in both the coefficient space and the number of boundary experiments.

\paragraph{Summary of Main Ideas.}
\label{par:main-idea}
We develop a framework for reconstructing an elliptic material coefficient from
finitely many boundary excitations, each producing a full Dirichlet trace. To
establish local stability, we first assume that the exact coefficient belongs
to a low-dimensional latent family
\(\mathcal M_{\mathrm{lat}}=\Phi(\mathcal Z)\subset\mathcal A_{\mathrm{ad}}\),
prescribed analytically or learned from coefficient samples, with
\(a^\dagger=\Phi(z^\dagger)\). We assume that \(\Phi\) is \(C^1\) as a map
into \(L^\infty(\Omega)\) and that \(D\Phi(z^\dagger)\) has full column rank.
If the admissible Neumann excitations separate
every nonzero perturbation in the latent tangent space at \(z^\dagger\), then
at most \(m\) can be selected so that the derivative of the latent finite-test
map has full column rank. Writing \(\varepsilon_L\) for the aggregate
\(H^{1/2}(\partial\Omega)\)-discrepancy between the selected traces, this yields
local injectivity and the Lipschitz stability estimate
\begin{equation}
\|\Phi(z)-\Phi(z^\dagger)\|_{L^\infty(\Omega)}\leq C\varepsilon_L
\quad\text{for }z\in\mathcal Z\text{ sufficiently close to }z^\dagger\label{eq:introduction-local-lipschitz-stability}
\end{equation}
Under suitable regularity and dimensional assumptions, we verify the required
separation property for the admissible experiment class using complex
geometrical optics solutions. Corollary~\ref{cor:constant-reference-two-dimensional-family}
gives an elementary two-dimensional example for affine families near a constant
reference coefficient.

Theorems~\ref{thm:finite-tests-from-linearized-injectivity}
and~\ref{thm:local-injectivity-from-finite-tests} establish the test count and
continuous local stability. Convergence of the coefficient sensitivities then
transfers this stability to sufficiently fine conforming finite element
discretizations, with a stability constant and neighborhood independent of the mesh size
(Theorem~\ref{thm:local-stability-discrete-finite-test-map}). To reduce the
cost of repeated discrete evaluations, we train a neural surrogate offline.
Uniform value accuracy suffices for a coefficient-error bound separating data
noise, finite element error, and surrogate error under residual comparison
(Corollary~\ref{cor:material-image-error-surrogate-residual-minimizers}).
Derivative accuracy additionally preserves the surrogate's own injectivity
and local stability
(Theorem~\ref{thm:stability-under-small-c1-perturbations}). These approximation
conditions are explicit hypotheses and are not certified by the training
procedure. Corollary~\ref{cor:l2-observation-stability} gives the same local
conclusions for the boundary \(L^2\)-norm used in the numerical objectives.

The stability theory is local around \(z^\dagger\) within the latent family.
For a true coefficient outside that family,
Remark~\ref{rem:representation-mismatch-error} adds a representation-error term
relative to a comparison coefficient in the same local stability set. Numerical experiments
use analytically parametrized elliptic inclusions and learned representations of
inclusions and crack-like materials. They illustrate latent-space reconstruction,
reduced online cost, and further improvement from optional FEM-based refinement
in the same latent space.

\paragraph{Previous Work.}
The mathematical study of the inverse conductivity problem began with Calder\'on
\cite{Calderon1980}. Its applications include electrical impedance tomography
(EIT) \cite{Cheney1999, Bor02, Hyv04} and related optical tomography problems
\cite{Arr99,Arr11,ArridgeSchotland2009}. Foundational work focused on uniqueness and 
identifiability, notably the results of Kohn and Vogelius 
\cite{KohnVogelius1984,KV85} and the breakthrough global 
uniqueness theorem of Sylvester and Uhlmann \cite{SU87}. 
A constructive reconstruction procedure in two dimensions was later
provided by Nachman \cite{Nachman1996}. Stability estimates of logarithmic type were obtained in \cite{Ale88}, with an extension to Lipschitz boundaries in \cite{Ale90}. The arguments in \cite{Man01} show that logarithmic stability cannot be improved.

Two main classes of computational methods have emerged.
Direct reconstruction techniques based on complex
geometrical optics (CGO) solutions and the D-bar method provide explicit 
inversion procedures in two dimensions \cite{Nachman1996,Knudsen2009}. 
Iterative approaches based on PDE-constrained
optimization have become the standard in practical EIT applications. 
These methods typically rely on variational formulations, such as the 
Kohn--Vogelius functional \cite{KV87, KM90} and Tikhonov-type 
regularization \cite{JM12, KJ15,HKR20,HK21}. Recently, machine-learning-based regularization has received increasing attention; see \cite{AMOS19} and references therein.

Finite element approximations and regularization methods for
EIT have been proposed in several works, including the variational and 
discrete approaches of \cite{Knowles1998,Know01, BGZ03, MMM04, GehreJinLu2014,Hinze2018}. 
The analysis of discretization effects and convergence 
of numerical schemes for the inverse conductivity problem has been 
studied in \cite{LR08 , GehreJinLu2014, Rond16, JXZ17, FR24}. Computational methods using neural networks to approximate unknowns have been considered in \cite{CJSZ23, CJQZ24}. For an overview of machine learning approaches to EIT, see \cite{denker2025deeplearningbasedreconstruction}.

Lipschitz stability can typically be recovered when the coefficient is restricted to a finite-dimensional space \cite{AV05, Sinc07,BCH11,Bour13,BFMRV14,AHGS17, RS22}. For piecewise constant coefficient classes, the stability constant can deteriorate exponentially as the number of subdomains increases \cite{Rond06}. More recently, Lipschitz stability has also been established under structural or geometric assumptions on the variation of the coefficient \cite{GHH26}.

Finite element approaches for inverse problems exploiting finite dimensionality have been considered in \cite{Harr19, Harr23, BH25}, with particular emphasis on monotonicity methods. Variational finite element methods for reconstructing piecewise constant coefficients on polygonal partitions were studied in \cite{BMPS18}. For inverse Robin problems with finite-dimensional coefficient spaces, finite element methods based on stability from unique continuation were analyzed in \cite{BCJZ25,BKO25}. In the context of unique continuation, collective data were used in \cite{BJL25} to isolate a low-dimensional manifold of solutions.

Despite these advances, the computational solution of the Calder\'on 
problem remains challenging, especially in the presence of partial or 
limited data. When the coefficient space is finite-dimensional, finitely many measurements of the Neumann-to-Dirichlet map are known to suffice to determine the coefficient \cite{AS19calderon, AS22independent, AS22, ASS23}. The Calder\'on-specific results in \cite{AS19calderon, AS22independent} also address reconstruction and, in the latter case, boundary inputs independent of the unknown and stability with respect to noise and model mismatch. A complementary recent result proves almost-sure identifiability of a \(d\)-dimensional analytic model class from \(2d+1\) random scalar measurements under full-data injectivity and suitable sampling assumptions \cite{APSS26}.

This finite-dimensional viewpoint motivates the use of machine learning to
construct low-dimensional latent representations of the coefficient space and
to perform the reconstruction in latent rather than ambient coordinates
\cite{AHSS24, BLLL25}. Such approaches require repeated evaluations of the
forward map and are therefore closely connected with the approximation of
high-dimensional parametric PDEs; see \cite{CD15}. In particular, we use a neural
surrogate operator, an approach related to the developments in
\cite{Dung2022,DNP23}.

The finite-dimensional stability principle is established in prior work,
including the manifold formulation in \cite{ASS23}. The latent finite element
approach of \cite{BLLL25} addresses unique continuation through a learned
representation of boundary data and the associated solution operator.
The surrogate reconstruction framework of
\cite{burman2026solvinginverseparametrizedproblems} treats parametrized inverse
problems, with quantitative photoacoustic tomography as a numerical example.
Here the unknown is the leading conductivity coefficient, and the observations
are Neumann-to-Dirichlet boundary responses. The main contribution is the
transfer of local finite-test stability through the continuous, finite element,
and surrogate maps by controlling coefficient sensitivities. The analysis also
separates the value accuracy needed for reconstruction-error control from the
derivative accuracy needed for stability of the surrogate itself.

\paragraph{Paper Outline.}
\label{par:outline-of-the-paper}
Section~\ref{sec:problem-formulation-finite-element-method-and-neural-surrogate-approximation}
introduces the forward problem, finite element method, and neural surrogate.
Section~\ref{sec:continuous-theory-for-latent-inversion-and-compressed-observations}
develops the continuous inverse theory, and
Section~\ref{sec:discrete-theory-for-the-latent-forward-map} its discrete and
surrogate counterparts. Section~\ref{sec:material-representations} discusses
material representations, Section~\ref{sec:numerical-experiments} presents the
numerical experiments, and Section~\ref{sec:conclusions} states the conclusions.
The appendix verifies the separation condition using CGO solutions.

%% file: sec-method-rev0.tex
\section{Mathematical and Computational Framework}
\label{sec:problem-formulation-finite-element-method-and-neural-surrogate-approximation}

\subsection{Continuous Forward and Inverse Problems}
\label{subsec:continuous-problem}

\paragraph{Domain and Material Coefficient.}
\label{par:domain-and-material-coefficient}
Let \(\Omega\subset \mathbb{R}^d\), \(d\in\{2,3\}\), be a bounded, connected Lipschitz domain with boundary \(\partial\Omega\). The material coefficient is denoted by \(a:\Omega\to\mathbb{R}\). We work with the admissible coefficient class
\begin{equation}
\mathcal{A}_{\mathrm{ad}}
=
\bigl\{
 a\in L^\infty(\Omega):0<a_{\min}\le a(x)\le a_{\max}\ \text{a.e. in }\Omega
\bigr\}
\label{eq:admissible-coefficient-class}
\end{equation}
where \(a_{\min}\) and \(a_{\max}\) are fixed positive constants. The coefficient \(a\) describes the material distribution and is the unknown in the inverse problem.
Whenever differentiability with respect to the coefficient is invoked, the
forward maps are understood as restrictions of maps defined on an open
\(L^\infty(\Omega)\)-neighborhood of \(\mathcal A_{\mathrm{ad}}\) consisting of
uniformly positive coefficients.

\paragraph{Boundary Value Problem.}
\label{par:neumann-boundary-value-problem}
We denote the space of compatible Neumann data by
\begin{equation}
H^{-1/2}_{\diamond}(\partial\Omega)
=
\bigl\{
 g\in H^{-1/2}(\partial\Omega):\langle g,1\rangle_{\partial\Omega}=0
\bigr\}\label{eq:h-minus-one-half-diamond}
\end{equation}
and the boundary-mean-zero trace space by
\begin{equation}
H^{1/2}_{\diamond}(\partial\Omega)
=
\bigl\{
v\in H^{1/2}(\partial\Omega):(v,1)_{\partial\Omega}=0
\bigr\}\label{eq:h-plus-one-half-diamond}
\end{equation}
Here and below, \(\langle\cdot,\cdot\rangle_{\partial\Omega}\) denotes the
duality pairing between \(H^{-1/2}(\partial\Omega)\) and
\(H^{1/2}(\partial\Omega)\).
Given a Neumann boundary datum \(g_N\in H^{-1/2}_{\diamond}(\partial\Omega)\), we consider the boundary value problem
\begin{align}
-\nabla\cdot(a\nabla u) &= 0 \quad \text{in }\Omega
\label{eq:strong-neumann-problem-pde}\\
a\nabla u\cdot n &= g_N \quad \text{on }\partial\Omega
\label{eq:strong-neumann-problem-bc}\\
(\gamma u,1)_{\partial\Omega} &= 0\label{eq:strong-neumann-problem-normalization}
\end{align}
where \(n\) denotes the unit outward normal on \(\partial\Omega\), \(\gamma\)
denotes the trace operator, and the boundary condition is understood in the
weak conormal sense through the weak formulation below. The compatibility condition
\(\langle g_N,1\rangle_{\partial\Omega}=0\) in
\eqref{eq:h-minus-one-half-diamond}, together with the normalization
\eqref{eq:strong-neumann-problem-normalization}, ensures that the solution \(u\)
is uniquely determined by the material coefficient \(a\) and the Neumann datum
\(g_N\). The boundary-mean normalization selects one representative of the
solution modulo constants, and all continuous and discrete NtD operators below
use this representative.

\paragraph{Weak Formulation.}
\label{par:weak-formulation-continuous-problem}
We introduce the boundary-mean-zero space
\begin{equation}
V=\bigl\{v\in H^1(\Omega):(\gamma v,1)_{\partial\Omega}=0\bigr\}\label{eq:mean-zero-space}
\end{equation}
For a fixed coefficient \(a\in \mathcal{A}_{\mathrm{ad}}\) and Neumann datum \(g_N\in H^{-1/2}_{\diamond}(\partial\Omega)\), we define the bilinear form \(A_a(\cdot,\cdot):V\times V\to\mathbb{R}\) and the linear functional \(F_{g_N}(\cdot):V\to\mathbb{R}\) by
\begin{align}
A_a(u,v) &= (a\nabla u,\nabla v)_{\Omega}
\label{eq:bilinear-form-definition}\\
F_{g_N}(v) &= \langle g_N,\gamma v\rangle_{\partial\Omega}
\label{eq:linear-form-definition}
\end{align}
The weak formulation of \eqref{eq:strong-neumann-problem-pde}--\eqref{eq:strong-neumann-problem-normalization} reads: given \(a \in \mathcal{A}_{\mathrm{ad}}\) and \(g_N \in H^{-1/2}_{\diamond}(\partial\Omega)\), find \(u(a;g_N)  \in V\) such that
\begin{equation}
A_a(u,v)=F_{g_N}(v)\quad \forall v\in V
\label{eq:weak-formulation-continuous}
\end{equation}

\paragraph{Finite Boundary Experiments.}
\label{par:finite-boundary-experiments}
We fix a finite family of admissible Neumann data,
\begin{equation}
\mathbf g_N
=
\bigl(g_N^1,\dots,g_N^L\bigr)
\in
\bigl(H^{-1/2}_{\diamond}(\partial\Omega)\bigr)^L
\label{eq:finite-family-neumann-data}
\end{equation}
These excitations specify the boundary experiments used in the inverse problem.
Theorems~\ref{thm:finite-tests-from-linearized-injectivity}
and~\ref{thm:local-injectivity-from-finite-tests} give sufficient local
conditions under which such a family can be selected so that the corresponding
latent forward map is locally stable near the reference point.

\paragraph{Boundary Solution Operator.}
\label{par:boundary-solution-operator}
The solution trace defines the Neumann-to-Dirichlet operator
\begin{equation}
\mathcal{N}_a:H^{-1/2}_{\diamond}(\partial\Omega)\to H^{1/2}_{\diamond}(\partial\Omega),
\qquad
\mathcal{N}_a g_N=u(a;g_N)|_{\partial\Omega}
\label{eq:boundary-solution-operator}
\end{equation}
For the prescribed boundary experiments, we collect the corresponding boundary responses in
\begin{equation}
N_L(a)=\bigl(\mathcal{N}_a g_N^1,\dots,\mathcal{N}_a g_N^L\bigr) \in Y_L
\label{eq:finite-test-boundary-response-map}
\end{equation}
where the finite-test observation space
\(Y_L=\bigl(H^{1/2}(\partial\Omega)\bigr)^L\) contains full boundary
traces and is therefore infinite-dimensional. We equip it with the product norm inherited from
\(H^{1/2}(\partial\Omega)\),
\(\|y\|_{Y_L}^2=\sum_{\ell=1}^L\|y^\ell\|_{H^{1/2}(\partial\Omega)}^2\) for
\(y=(y^1,\dots,y^L)\). The PDE responses have zero boundary mean in each
component, while the observation space also accommodates surrogate traces
with nonzero means. Figure~\ref{fig:NtD-map} illustrates the
Neumann-to-Dirichlet map.

\begin{figure}
    \centering
\includegraphics[width=0.8\textwidth]{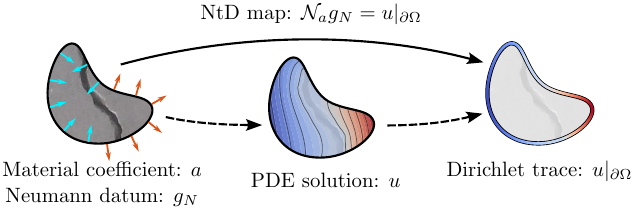}
\caption{Schematic of the Neumann-to-Dirichlet map for a fixed material coefficient \(a\). Given a compatible Neumann datum \(g_N\), the solution \(u=u(a;g_N)\) of the elliptic problem determines the Dirichlet trace \(\mathcal N_a g_N=u|_{\partial\Omega}\).}
\label{fig:NtD-map}
\end{figure}

\paragraph{Inverse Problem.}
\label{par:inverse-problem}
Let \(a^\dagger\in\mathcal A_{\mathrm{ad}}\) denote the unknown exact coefficient.
For the prescribed boundary experiments, the corresponding measured
Dirichlet data are
\begin{equation}
g_D^\ell
=
\mathcal N_{a^\dagger}g_N^\ell,
\qquad
\ell=1,\dots,L
\label{eq:finite-measurements-reference-coefficient}
\end{equation}
We collect these measurements in the data vector
\begin{equation}
\data
=
\bigl(
g_D^1,
\dots,
g_D^L
\bigr)
\in
Y_L
\label{eq:finite-measurement-vector}
\end{equation}
which satisfies \(\data=N_L(a^\dagger)\).
The inverse problem is to determine \(a^\dagger\), or an approximation to it, from the data \(\data\).
A standard formulation is the data-fitting problem: find \(a_{\mathcal A}^\star\in\mathcal{A}_{\mathrm{ad}}\) such that
\begin{equation}
a_{\mathcal A}^\star
\in
\operatorname*{arg\,min}_{a\in\mathcal{A}_{\mathrm{ad}}}
\frac12\|N_L(a)-\data\|_{Y_L}^2 + \mathcal{R}_{\mathcal A}(a)
\label{eq:continuous-inverse-problem-data-fitting}
\end{equation}
where \(\mathcal{R}_{\mathcal A}(a)\) is a regularization term. The optimization is posed over the full admissible class \(\mathcal A_{\mathrm{ad}}\). We next restrict the search to a low-dimensional latent coefficient family to incorporate prior structural information and reduce the number of unknowns.

\paragraph{Latent Representation.}
\label{par:latent-representation}
We describe the material coefficient through a latent variable
\(z\in\mathcal{Z}\subset\mathbb{R}^m\), where \(m\) is chosen to be small,
and a representation map
\begin{equation}
\Phi:\mathcal{Z}\to \mathcal{A}_{\mathrm{ad}}
\label{eq:representation-map}
\end{equation}
so that the material coefficients are given by
\begin{equation}
a=\Phi(z)
\label{eq:coefficient-from-representation-map}
\end{equation}
The image of \(\Phi\) defines the latent coefficient family
\begin{equation}
\mathcal{M}_{\mathrm{lat}}
=
\Phi(\mathcal{Z})
\subset
\mathcal{A}_{\mathrm{ad}}
\label{eq:latent-material-manifold}
\end{equation}
The representation map \(\Phi\) may be specified analytically or learned offline from material samples. During inversion, \(\Phi\) is fixed and known. Figure~\ref{fig:coeff-map} illustrates this map.
Whenever differentiability is invoked, \(C^k(\mathcal Z;X)\) means that the
map under consideration admits a \(C^k\)-extension to an open neighborhood of
\(\mathcal Z\), and reference latent variables in the local analysis are
assumed to be interior points of \(\mathcal Z\).

\begin{figure}
    \centering
    \includegraphics[width=0.7\textwidth]{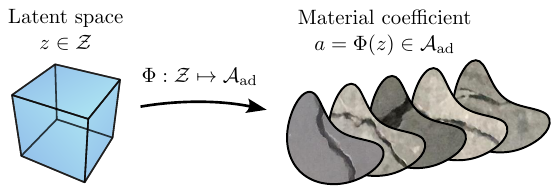}
    \caption{Schematic of the latent representation. The representation map \(\Phi:\mathcal Z\to\mathcal A_{\mathrm{ad}}\) maps a latent coordinate \(z\) to the material coefficient \(a=\Phi(z)\). Its image \(\mathcal M_{\mathrm{lat}}=\Phi(\mathcal Z)\) is the low-dimensional coefficient family used in the reduced inverse problem.}
    \label{fig:coeff-map}
    \end{figure}

\paragraph{Reduced Inverse Problem.}
\label{par:reduced-inverse-problem}
Restricting the admissible coefficients to
\(\mathcal{M}_{\mathrm{lat}}\subset\mathcal{A}_{\mathrm{ad}}\) yields the mapping
\begin{equation}
N_L^\Phi(z)
=
N_L(\Phi(z))
=
\bigl(
\mathcal{N}_{\Phi(z)}g_N^1,
\dots,
\mathcal{N}_{\Phi(z)}g_N^L
\bigr)
\in
Y_L
\label{eq:continuous-latent-finite-test-map}
\end{equation}
Under the exact latent-model assumption, there exists
\(z^\dagger\in\mathcal Z\) such that \(a^\dagger=\Phi(z^\dagger)\), and hence
\(\data=N_L^\Phi(z^\dagger)\). If this assumption does not hold, the reduced
problem instead seeks a best fit within \(\mathcal M_{\mathrm{lat}}\).
The reduced inverse problem is to determine a latent variable
\(z^\star\in\mathcal{Z}\) such that
\begin{equation}
z^\star
\in
\operatorname*{arg\,min}_{z\in\mathcal{Z}} 
\frac12
\|N_L^\Phi(z)-\data\|_{Y_L}^2 + \mathcal{R}_{\mathcal Z}(z)
\label{eq:continuous-reduced-inverse-problem}
\end{equation}
where \(\mathcal{R}_{\mathcal Z}(z)\) is a regularization term. The corresponding approximation of the material coefficient is then given by
\(a_{\mathcal Z}^\star=\Phi(z^\star)\).
The stability analysis below concerns the forward maps and is independent of a
particular choice of regularization.

\subsection{Finite Element Method}
\label{subsec:finite-element-method}

Let \(\mathcal{T}_h\) be a conforming partition of \(\Omega\), and let \(V_h\subset H^1(\Omega)\) be a standard conforming finite element space associated with \(\mathcal{T}_h\). We define the discrete boundary-mean-zero space by
\begin{equation}
V_h^0=\bigl\{v_h\in V_h:(\gamma v_h,1)_{\partial\Omega}=0\bigr\}\label{eq:discrete-mean-zero-space}
\end{equation}
For \(g_N\in H^{-1/2}_{\diamond}(\partial\Omega)\), the finite element
approximation of the Neumann problem reads: find \(u_h(a;g_N)\in V_h^0\) such
that
\begin{equation}
A_a\bigl(u_h(a;g_N),v_h\bigr)=\langle g_N,\gamma v_h\rangle_{\partial\Omega}
\quad \forall v_h\in V_h^0
\label{eq:discrete-weak-formulation}
\end{equation}
For the fixed boundary experiments, we set
\(u_h^\ell(a)=u_h(a;g_N^\ell)\), \(\ell=1,\dots,L\).
The corresponding discrete boundary operator is defined by
\begin{equation}
\mathcal{N}_{a,h}:H^{-1/2}_{\diamond}(\partial\Omega)\to W_h,
\qquad
\mathcal{N}_{a,h}g_N=u_h(a;g_N)|_{\partial\Omega}
\label{eq:discrete-boundary-solution-operator}
\end{equation}
where
\begin{equation}
W_h=\gamma(V_h)\subset H^{1/2}(\partial\Omega)\label{eq:discrete-trace-space-method}
\end{equation}
The discrete NtD responses belong to the subspace \(\gamma(V_h^0)\).
The discrete finite observation space is
\begin{equation}
Y_{L,h}=(W_h)^L\subset Y_L
\label{eq:discrete-finite-observation-space}
\end{equation}
and we equip \(Y_{L,h}\) with the norm inherited from \(Y_L\).
For the selected family of tests \(\{g_N^\ell\}_{\ell=1}^L\), we write
\begin{equation}
N_{L,h}(a)=\bigl(\mathcal{N}_{a,h}g_N^1,\dots,\mathcal{N}_{a,h}g_N^L\bigr)
\in Y_{L,h}
\label{eq:finite-test-discrete-boundary-response-map}
\end{equation}
The discrete counterpart of the exact data vector is
\begin{equation}
\datah=N_{L,h}(a^\dagger)\in Y_{L,h}\label{eq:discrete-data-vector}
\end{equation}
This is an idealized model-generated datum. Unlike \(\data\), it is not an
independently measured quantity because its definition uses \(a^\dagger\). We
use it as the noiseless datum in the synthetic experiments. For measured data,
one would instead compare with \(\data\) in \(Y_L\), or first specify a
projection or discretization of the measurements into \(Y_{L,h}\).

\subsection{Neural Surrogate Approximation}
\label{subsec:neural-surrogate-approximation}

\paragraph{Latent-to-Observation Workflow.}
\label{par:latent-to-observation-workflow}
The reconstruction workflow maps latent variables to boundary responses and
then to a data-misfit objective.
At the continuous level one uses the Neumann-to-Dirichlet map
\(\mathcal{N}_{\Phi(z)}\), while at the discrete level this map is replaced by
\(\mathcal{N}_{\Phi(z),h}\). The neural surrogate introduced below replaces the
map \(N_{L,h}^\Phi\) by its learned approximation \(N_{L,h,M}^\Phi\), while
keeping the inverse variable \(z\) in the same latent space. These relations are
illustrated in Figure~\ref{fig:NN-map}.

\begin{figure}
    \centering
\includegraphics[width=0.9\textwidth]{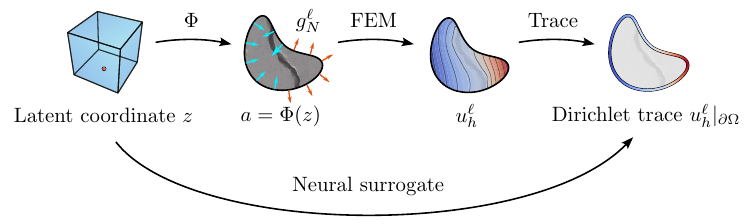}
\caption{Schematic of the discrete finite-test map and its neural surrogate. The
upper path shows the response for a generic boundary experiment \(\ell\), mapping
the latent coordinate \(z\) and prescribed Neumann datum \(g_N^\ell\) to the
Dirichlet trace \(u_h^\ell|_{\partial\Omega}\). The neural surrogate approximates
the vector-valued map \(N_{L,h}^\Phi\), which collects these responses for all
\(\ell=1,\dots,L\).}
\label{fig:NN-map}
\end{figure}

\paragraph{Latent-Space Surrogates.}
\label{par:training-data-and-latent-space-surrogates}
For the prescribed Neumann data \(g_N^1,\dots,g_N^L\), we train a surrogate network to approximate the discrete finite-test response map
\begin{equation}
N_{L,h}^\Phi(z)
=
N_{L,h}(\Phi(z))
=
\bigl(\mathcal{N}_{\Phi(z),h}g_N^1,\dots,\mathcal{N}_{\Phi(z),h}g_N^L\bigr)
\in Y_{L,h}
\label{eq:training-target-finite-test-map}
\end{equation}
where \(Y_{L,h}\) is defined in \eqref{eq:discrete-finite-observation-space}. For latent training points \(z_i\in\mathcal{Z}\), the corresponding training targets are \(N_{L,h}^\Phi(z_i)\). Since \(Y_{L,h}\) is finite-dimensional, we fix a coordinate isomorphism
\begin{equation}
\Lambda_h:Y_{L,h}\to \mathbb{R}^{P_h},
\qquad P_h=\dim Y_{L,h}
\label{eq:method-coordinate-isomorphism-discrete-output}
\end{equation}
and define the coordinate representation of this map by
\begin{equation}
\widehat{N}_{L,h}^\Phi=\Lambda_h\circ N_{L,h}^\Phi:\mathcal{Z}\to \mathbb{R}^{P_h}
\label{eq:method-coordinate-representation-discrete-forward-map}
\end{equation}
The neural-network surrogate is trained directly on the latent variable and approximates the coordinate map \(\widehat{N}_{L,h}^\Phi\).

\paragraph{Two-Layer Ansatz.}
\label{par:two-layer-ansatz}

For the \(C^1\)-approximation hypothesis used in the stability analysis, we consider vector-valued two-layer neural networks. For a fixed activation function \(\sigma:\mathbb{R}\to \mathbb{R}\) and width \(M\), the surrogate network \(\widehat{N}_{L,h,M}^\Phi:\mathcal{Z}\to \mathbb{R}^{P_h}\) is given by
\begin{equation}
\widehat{N}_{L,h,M}^\Phi(z)=\sum_{r=1}^M \boldsymbol{\beta}_{r,h}\,\sigma({\boldsymbol w}_{r,h}\cdot z+b_{r,h})
\label{eq:two-layer-network-discrete-forward-map}
\end{equation}
with parameters
\begin{equation}
\boldsymbol{\beta}_{r,h}\in \mathbb{R}^{P_h},
\quad
{\boldsymbol w}_{r,h}\in \mathbb{R}^m,
\quad
b_{r,h}\in \mathbb{R} \label{eq:network-parameters}
\end{equation}
The associated surrogate map is then defined by
\begin{equation}
 N_{L,h,M}^\Phi=\Lambda_h^{-1}\circ \widehat{N}_{L,h,M}^\Phi :\mathcal{Z}\to Y_{L,h}
\label{eq:network-surrogate-discrete-forward-map}
\end{equation}
and is used to approximate the discrete finite-test map \(N_{L,h}^\Phi\).

\paragraph{Surrogate-Based Inverse Problem.}
\label{par:surrogate-based-inverse-problem}
For the idealized discrete data \(\datah\) defined in
\eqref{eq:discrete-data-vector}, replacing the discrete finite-test map by its
neural surrogate gives the computational inverse problem: find
\(z_{h,M}^\star\in\mathcal Z\) such that
\begin{equation}
z_{h,M}^\star
\in
\operatorname*{arg\,min}_{z\in\mathcal Z}
\frac12\|N_{L,h,M}^\Phi(z)-\datah\|_{Y_L}^2+\mathcal R_{\mathcal Z}(z)\label{eq:surrogate-based-inverse-problem}
\end{equation}
The corresponding reconstructed coefficient is
\(a_{h,M}^\star=\Phi(z_{h,M}^\star)\). Section~\ref{sec:numerical-experiments}
uses the unregularized version of this objective with a discrete
\(L^2(\partial\Omega)^L\)-misfit.

%% file: sec-analysis-rev0.tex
\section[Continuous Theory for Latent Inversion with Finite Tests]{Continuous Theory for Latent\\ Inversion with Finite Tests}
\label{sec:continuous-theory-for-latent-inversion-and-compressed-observations}

\paragraph{Overview.}

This section establishes local stability for the exact latent forward map before
introducing the finite element discretization and neural-network surrogate. We
first differentiate the Neumann-to-Dirichlet map with respect to the coefficient
and compose the resulting sensitivity with the latent map \(\Phi\). At the
reference point \(z^\dagger\), a separation condition for products of gradients
then yields injectivity of the linearized map on the finite-dimensional latent
tangent space \(T^\Phi_{z^\dagger}=\operatorname{range}D\Phi(z^\dagger)\). A
dimension argument selects at most \(m\) Neumann tests whose cumulative Gram
matrix is positive definite, which gives local Lipschitz stability in both the
latent and coefficient variables. Appendix~\ref{sec:cgo-verification-of-the-separation-condition}
provides a CGO-based sufficient condition for separation, while
Section~\ref{sec:discrete-theory-for-the-latent-forward-map} transfers the
finite-test stability to the discrete and surrogate maps.

\paragraph{Standing Assumptions.}

Let \(z^\dagger\in\mathcal Z\) denote the latent variable corresponding to the
target coefficient \(a^\dagger=\Phi(z^\dagger)\). The analysis below is local
about \(z^\dagger\). We use the following assumptions:
\begin{itemize}
\item \proofparagraph{A1:} The latent map \(\Phi\) satisfies
\begin{equation}
\Phi\in C^1\bigl(\mathcal{Z};L^\infty(\Omega)\bigr),
\qquad
\Phi(\mathcal{Z})\subset \mathcal{A}_{\mathrm{ad}}
\label{eq:assumption-latent-map}
\end{equation}

\item \proofparagraph{A2:} The derivative of the latent map has full column rank at \(z^\dagger\),
\begin{equation}
\operatorname{rank} D\Phi(z^\dagger)=m
\label{eq:assumption-dg-full-rank}
\end{equation}

\end{itemize}
We also fix a nonempty class
\(\mathcal{G}\subset H^{-1/2}_{\diamond}(\partial\Omega)\) of admissible
Neumann tests. The richness of this class is imposed separately through the
separation condition in Theorem~\ref{thm:linearized-injectivity-on-the-tangent-space}.

\subsection{Differentiability of the Continuous NtD Map}
\label{subsec:differentiability-of-the-continuous-neumann-to-dirichlet-map}

For each \(g_N\in H^{-1/2}_{\diamond}(\partial\Omega)\), define the solution
map \(S_{g_N}:\mathcal{A}_{\mathrm{ad}}\to V\) by
\begin{equation}
S_{g_N}(a)=u(a;g_N)\label{eq:solution-map-definition-fixed-neumann-data}
\end{equation}
\begin{prop}[Differentiability of the continuous boundary map]
\label{prop:differentiability-of-the-boundary-map}
The solution map \(S_{g_N}\) is the restriction to
\(\mathcal A_{\mathrm{ad}}\) of a continuously Fr\'echet differentiable map
defined on an open \(L^\infty(\Omega)\)-neighborhood of
\(\mathcal A_{\mathrm{ad}}\) consisting of uniformly positive coefficients.
For \(a\in\mathcal A_{\mathrm{ad}}\) and
\(\delta a\in L^\infty(\Omega)\), its derivative
\begin{equation}
w(a,\delta a;g_N)=DS_{g_N}(a)[\delta a]\label{eq:directional-derivative-solution-map}
\end{equation}
is the unique solution of
\begin{equation}
A_a\bigl(w(a,\delta a;g_N),v\bigr)
=
-(\delta a \nabla u(a;g_N),\nabla v)_{\Omega}
\quad \forall v\in V\label{eq:sensitivity-problem-continuous}
\end{equation}
Moreover, the boundary map
\begin{equation}
\mathcal{B}_{g_N}(a)=\mathcal{N}_a g_N\label{eq:boundary-map-fixed-neumann-data}
\end{equation}
is continuously Fr\'echet differentiable as a map from
\(\mathcal{A}_{\mathrm{ad}}\) into
\(H^{1/2}_{\diamond}(\partial\Omega)\), with derivative
\begin{equation}
D\mathcal{B}_{g_N}(a)[\delta a]
=
w(a,\delta a;g_N)|_{\partial\Omega}\label{eq:derivative-of-boundary-map}
\end{equation}
\end{prop}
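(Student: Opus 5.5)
The plan is to apply the implicit function theorem to the weak formulation on an open set of uniformly positive coefficients. Define the open set
\[
\mathcal U=\{a\in L^\infty(\Omega): \operatorname*{ess\,inf} a > a_{\min}/2\},
\]
which contains \(\mathcal A_{\mathrm{ad}}\). On \(\mathcal U\), \(A_a\) is bounded and coercive on \(V\). Coercivity uses the Poincaré-type inequality \(\|v\|_{H^1(\Omega)}\le C_P\|\nabla v\|_{L^2(\Omega)}\) for boundary-mean-zero \(v\), which holds on connected Lipschitz domains. By Lax--Milgram, \(u(a;g_N)\) is then well defined for \(a\in\mathcal U\), with the uniform bound \(\|u\|_{H^1}\le C\|g_N\|_{H^{-1/2}}\).

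Introduce the operator \(\mathcal A:\mathcal U\to\mathcal L(V,V^*)\), \(\langle\mathcal A(a)u,v\rangle=A_a(u,v)\), and the residual map \(F:\mathcal U\times V\to V^*\), \(F(a,u)=\mathcal A(a)u-F_{g_N}\). The map \(a\mapsto\mathcal A(a)\) is the restriction of a bounded linear map \(L^\infty(\Omega)\to\mathcal L(V,V^*)\), since
\[
|(\delta a\nabla u,\nabla v)_\Omega|\le\|\delta a\|_{L^\infty(\Omega)}\|u\|_{H^1(\Omega)}\|v\|_{H^1(\Omega)}.
\]
Hence \(F\) is bilinear-plus-constant and therefore \(C^\infty\). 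Its partial derivatives are \(\partial_uF(a,u)=\mathcal A(a)\) and \(\partial_aF(a,u)[\delta a]=(\delta a\nabla u,\nabla\cdot)_\Omega\). By Lax--Milgram, \(\partial_uF(a,u)\) is an isomorphism \(V\to V^*\) for every \(a\in\mathcal U\).

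The implicit function theorem then shows that the unique solution map \(a\mapsto u(a;g_N)\) (unique by Lax--Milgram) is \(C^1\) on \(\mathcal U\). Its derivative is
\[
DS_{g_N}(a)[\delta a]=-\mathcal A(a)^{-1}\bigl((\delta a\nabla u,\nabla\cdot)_\Omega\bigr),
\]
which is exactly the sensitivity problem \eqref{eq:sensitivity-problem-continuous}. Uniqueness of \(w\) again follows from Lax--Milgram.

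Alternatively, and more transparently, one can verify the derivative directly. Subtracting the weak formulations for \(a+\delta a\) and \(a\) gives
\[
A_a\bigl(u(a+\delta a)-u(a)-w,\,v\bigr)=-\bigl(\delta a\nabla(u(a+\delta a)-u(a)),\nabla v\bigr)_\Omega .
\]
Combining this with the Lipschitz bound \(\|u(a+\delta a)-u(a)\|_{H^1}\le C\|\delta a\|_{L^\infty}\) yields a remainder of order \(O(\|\delta a\|_{L^\infty}^2)\). Continuity of \(a\mapsto DS_{g_N}(a)\) in operator norm follows from continuity of inversion on isomorphisms, together with continuity of \(a\mapsto u(a)\).

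For the boundary map, write \(\mathcal B_{g_N}=\gamma\circ S_{g_N}\). The trace \(\gamma:H^1(\Omega)\to H^{1/2}(\partial\Omega)\) is bounded and linear, and on \(V\) its range lies in \(H^{1/2}_\diamond(\partial\Omega)\). The chain rule therefore gives \(C^1\)-differentiability with derivative \(w|_{\partial\Omega}\), which also has zero boundary mean.

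The only point needing care is the coercivity constant for the normalized space \(V\) with boundary-mean normalization, that is, a Poincaré inequality involving the boundary mean. I would prove it by the standard compactness (Rellich) contradiction argument on the connected Lipschitz domain. Everything else is routine.
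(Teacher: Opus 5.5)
Your proposal is correct. Its second half, the direct verification, is precisely the paper's proof. The paper subtracts the state equations to obtain the Lipschitz bound \eqref{eq:continuous-solution-perturbation-estimate}. It then notes that \(r_b=u_b-u-w\) solves \(A_a(r_b,v)=-(b\nabla(u_b-u),\nabla v)_\Omega\), so the remainder is \(O(\|b\|_{L^\infty(\Omega)}^2)\).

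You genuinely differ in two places. Your primary route is the implicit function theorem, or equivalently \(S_{g_N}(a)=\mathcal A(a)^{-1}F_{g_N}\) with \(a\mapsto\mathcal A(a)\) affine and bounded and inversion smooth on isomorphisms. You also obtain continuity of \(DS_{g_N}\) from continuity of inversion. This gives more regularity with no computation: smooth, indeed analytic, dependence on \(a\). You also correctly invoke global Lax--Milgram uniqueness to identify the local implicit functions with \(S_{g_N}\).

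The paper instead subtracts the sensitivity equations at \(a_1\) and \(a_2\). This yields the explicit Lipschitz estimate \eqref{eq:continuous-solution-derivative-continuity}, with constants depending only on the uniform coercivity constant and \(\|g_N\|_{H^{-1/2}(\partial\Omega)}\). That quantitative form is reused later in the paper. The state-subtraction bound supplies \(C_F\) in Remark~\ref{rem:representation-mismatch-error}. The same subtraction argument on \(V_h^0\) gives the \(h\)-uniform continuity of the discrete derivative in the proof of Theorem~\ref{thm:local-stability-discrete-finite-test-map}.

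Your route recovers the same estimate if you make it quantitative. Write \(\mathcal A(a_1)^{-1}-\mathcal A(a_2)^{-1}=\mathcal A(a_1)^{-1}(\mathcal A(a_2)-\mathcal A(a_1))\mathcal A(a_2)^{-1}\). Then use the uniform bound on \(\|\mathcal A(a)^{-1}\|_{\mathcal L(V^*,V)}\) that coercivity provides on your set \(\mathcal U\).
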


\begin{proof}
Well-posedness follows from the Lax--Milgram lemma and the boundary-mean
Poincar\'e inequality. Using the coefficient-neighborhood
convention in Section~\ref{subsec:continuous-problem}, consider a perturbation
\(b\in L^\infty(\Omega)\) such that \(a+b\) remains in the open uniformly
positive coefficient neighborhood. Set \(u=S_{g_N}(a)\) and
\(u_b=S_{g_N}(a+b)\). Subtracting the two state equations and applying the
uniform stability estimate gives
\begin{equation}
\|u_b-u\|_{H^1(\Omega)}
\lesssim
\|b\|_{L^\infty(\Omega)}\|g_N\|_{H^{-1/2}(\partial\Omega)}
\label{eq:continuous-solution-perturbation-estimate}
\end{equation}
Let \(w=w(a,b;g_N)\) solve \eqref{eq:sensitivity-problem-continuous} and set
\(r_b=u_b-u-w\). Another subtraction gives
\begin{equation}
A_a(r_b,v)
=
-(b\nabla(u_b-u),\nabla v)_\Omega
\quad \forall v\in V
\label{eq:continuous-solution-derivative-remainder-equation}
\end{equation}
and therefore
\begin{equation}
\|r_b\|_{H^1(\Omega)}
\lesssim
\|b\|_{L^\infty(\Omega)}^2\|g_N\|_{H^{-1/2}(\partial\Omega)}
\label{eq:continuous-solution-derivative-remainder-estimate}
\end{equation}
This proves the asserted Fr\'echet derivative. To verify its continuity, let
\(a_1\) and \(a_2\) belong to the same uniformly positive coefficient
neighborhood, and set \(u_i=S_{g_N}(a_i)\) and
\(w_i=DS_{g_N}(a_i)[\delta a]\) for \(i=1,2\). Subtracting the two
sensitivity equations yields
\begin{equation}
A_{a_1}(w_1-w_2,v)
=
-(\delta a\nabla(u_1-u_2),\nabla v)_\Omega
-((a_1-a_2)\nabla w_2,\nabla v)_\Omega
\quad \forall v\in V
\label{eq:continuous-sensitivity-perturbation-equation}
\end{equation}
The uniform stability estimate, together with
\eqref{eq:continuous-solution-perturbation-estimate}, therefore gives
\begin{equation}
\|DS_{g_N}(a_1)-DS_{g_N}(a_2)\|_{\mathcal L(L^\infty(\Omega),V)}
\lesssim
\|a_1-a_2\|_{L^\infty(\Omega)}
\|g_N\|_{H^{-1/2}(\partial\Omega)}
\label{eq:continuous-solution-derivative-continuity}
\end{equation}
Thus \(S_{g_N}\) is continuously Fr\'echet differentiable. Finally, the trace
map \(V\to H^{1/2}_{\diamond}(\partial\Omega)\) is bounded, which proves the
claims for \(\mathcal B_{g_N}\).
\end{proof}

\subsection{Differentiability of the Latent Forward Map}
\label{subsec:latent-reduced-forward-map}

\begin{prop}[Differentiability of the latent reduced map]
\label{prop:differentiability-of-the-latent-reduced-map}
Under Assumption (A1), the map \(N_L^\Phi:\mathcal Z\to Y_L\) defined in \eqref{eq:continuous-latent-finite-test-map} is of class \(C^1\). For \(\eta\in \mathbb{R}^m\), its derivative is given by
\begin{equation}
DN_L^\Phi(z)\eta
=
\bigl(
w^1(z;\eta)|_{\partial\Omega},\dots,w^L(z;\eta)|_{\partial\Omega}
\bigr)
\label{eq:derivative-of-the-latent-reduced-map}
\end{equation}
where, for each \(\ell=1,\dots,L\), \(w^\ell(z;\eta)\in V\) solves
\begin{equation}
A_{\Phi(z)}(w^\ell(z;\eta),v)
=
-
\bigl(
D\Phi(z)\eta\,\nabla u^\ell(z),
\nabla v
\bigr)_{\Omega}
\quad \forall v\in V
\label{eq:sensitivity-problem-latent-direction}
\end{equation}
and
\begin{equation}
u^\ell(z)=u\bigl(\Phi(z);g_N^\ell\bigr)
\label{eq:background-state-latent-direction}
\end{equation}
\end{prop}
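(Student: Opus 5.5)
The proof will be a chain-rule argument combining Assumption (A1) with Proposition~\ref{prop:differentiability-of-the-boundary-map}. For each test \(\ell\), the component \(z\mapsto \mathcal N_{\Phi(z)}g_N^\ell\) is the composition \(\mathcal B_{g_N^\ell}\circ\Phi\). Here \(\Phi\) is \(C^1\) from (an open neighborhood of) \(\mathcal Z\) into \(L^\infty(\Omega)\) with values in \(\mathcal A_{\mathrm{ad}}\). The map \(\mathcal B_{g_N^\ell}\) is the restriction of a \(C^1\) map defined on an open uniformly positive \(L^\infty\)-neighborhood \(\mathcal U\) of \(\mathcal A_{\mathrm{ad}}\).

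\emph{Step 1: composition is well defined near \(\mathcal Z\).} The \(C^1\)-extension of \(\Phi\) to an open neighborhood of \(\mathcal Z\) need not map into \(\mathcal A_{\mathrm{ad}}\) itself. However, \(\Phi(\mathcal Z)\subset\mathcal A_{\mathrm{ad}}\subset\mathcal U\), \(\mathcal U\) is open, and \(\Phi\) is continuous. Hence \(\Phi^{-1}(\mathcal U)\) is an open neighborhood of \(\mathcal Z\), and the composition is defined there. I expect this bookkeeping to be the only (mild) obstacle: the chain rule must be applied on an open set, and the conventions of Section~\ref{subsec:continuous-problem} provide exactly this.

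\emph{Step 2: chain rule.} On \(\Phi^{-1}(\mathcal U)\) the map \(\mathcal B_{g_N^\ell}\circ\Phi\) is Fr\'echet differentiable by the chain rule in Banach spaces. Its derivative is
\begin{equation*}
D(\mathcal B_{g_N^\ell}\circ\Phi)(z)\eta = D\mathcal B_{g_N^\ell}(\Phi(z))[D\Phi(z)\eta] = w\bigl(\Phi(z),D\Phi(z)\eta;g_N^\ell\bigr)\big|_{\partial\Omega}.
\end{equation*}
Here \(w\) is characterized by \eqref{eq:sensitivity-problem-continuous} with \(a=\Phi(z)\) and \(\delta a=D\Phi(z)\eta\), and \(u(a;g_N^\ell)=u^\ell(z)\). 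This is precisely \eqref{eq:sensitivity-problem-latent-direction}, so setting \(w^\ell(z;\eta)=w(\Phi(z),D\Phi(z)\eta;g_N^\ell)\) gives the stated formula. Uniqueness of \(w^\ell\) follows from Lax--Milgram, since \(A_{\Phi(z)}\) is coercive on \(V\).

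\emph{Step 3: continuity of the derivative.} The map \(z\mapsto D\mathcal B_{g_N^\ell}(\Phi(z))\) is continuous into \(\mathcal L(L^\infty(\Omega),H^{1/2}(\partial\Omega))\), because it composes a continuous \(\Phi\) with a continuous derivative; the Lipschitz bound \eqref{eq:continuous-sensitivity-perturbation-equation}--\eqref{eq:continuous-solution-derivative-continuity} gives even local Lipschitz continuity. The map \(z\mapsto D\Phi(z)\) is continuous into \(\mathcal L(\mathbb R^m,L^\infty(\Omega))\). Their composition \(z\mapsto D\mathcal B_{g_N^\ell}(\Phi(z))\circ D\Phi(z)\) is therefore continuous into \(\mathcal L(\mathbb R^m,H^{1/2}(\partial\Omega))\). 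This uses the bound \(\|AB-A'B'\|\le\|A-A'\|\|B\|+\|A'\|\|B-B'\|\) together with local boundedness of both factors.

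Finally, \(Y_L\) carries the product norm and \(L\) is finite, so a map into \(Y_L\) is \(C^1\) if and only if each component is. The derivative is the tuple of component derivatives, which yields \eqref{eq:derivative-of-the-latent-reduced-map}.
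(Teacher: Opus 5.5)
Your proposal is correct and follows the paper's proof, which is the same componentwise chain-rule argument combining Proposition~\ref{prop:differentiability-of-the-boundary-map} with Assumption (A1). You additionally spell out two steps the paper leaves implicit: restricting to the open set \(\Phi^{-1}(\mathcal U)\), and checking continuity of \(z\mapsto D\mathcal B_{g_N^\ell}(\Phi(z))\circ D\Phi(z)\).
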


\begin{proof}
Proposition~\ref{prop:differentiability-of-the-boundary-map}, the chain rule,
and Assumption (A1) show that each component
\(z\mapsto\mathcal B_{g_N^\ell}(\Phi(z))\) is of class \(C^1\). Collecting the
components gives the stated regularity of \(N_L^\Phi\), and applying the chain
rule componentwise gives \eqref{eq:derivative-of-the-latent-reduced-map}--\eqref{eq:background-state-latent-direction}.
\end{proof}

\subsection{Linearized Injectivity and Finite Neumann Tests}
\label{subsec:linearized-injectivity-and-finite-neumann-tests}

Assumption (A2) makes \(D\Phi(z^\dagger)\) injective. We call
\begin{equation}
T^\Phi_{z^\dagger}
:=
\operatorname{range}D\Phi(z^\dagger)\label{eq:latent-tangent-space-at-reference-point}
\end{equation}
the latent tangent space at \(z^\dagger\). It is associated with the local
parametrization by \(\Phi\); no globally embedded-manifold structure is assumed
for the full image \(\mathcal M_{\mathrm{lat}}\).

\paragraph{Linearized Responses.}
\label{par:linearized-responses}
Let \(a^\dagger=\Phi(z^\dagger)\) be the reference coefficient fixed above.
We define the family of admissible background states by
\begin{equation}
\mathcal{U}(a^\dagger)
=
\bigl\{
u(a^\dagger;g_N):g_N\in \mathcal{G}
\bigr\}
\label{eq:background-state-family}
\end{equation}
For \(\delta a\in T^\Phi_{z^\dagger}\) and \(g_N\in \mathcal{G}\), let \(u=u(a^\dagger;g_N)\in V\) denote the corresponding background solution and let \(w=w(\delta a;g_N)\in V\) solve
\begin{equation}
A_{a^\dagger}(w,v) 
=
-
(\delta a \nabla u,\nabla v)_{\Omega}
\quad \forall v\in V
\label{eq:sensitivity-problem-reference-coefficient}
\end{equation}
The linearized Neumann-to-Dirichlet map at \(a^\dagger\) is then defined by
\begin{equation}
D\mathcal{B}_{g_N}(a^\dagger)[\delta a]
=
w(\delta a;g_N)|_{\partial\Omega}
\label{eq:linearized-neumann-to-dirichlet-map}
\end{equation}

\begin{thm}[Linearized injectivity on the tangent space]
\label{thm:linearized-injectivity-on-the-tangent-space}
Assume that the family of products of gradients of background solutions separates the tangent space in the sense that
\begin{equation}
\delta a\in T^\Phi_{z^\dagger}
\quad\text{and}\quad
(\delta a \nabla u,\nabla v)_{\Omega}=0
\quad \forall u,v\in \mathcal{U}(a^\dagger)
\quad\Longrightarrow\quad
\delta a=0
\label{eq:separation-condition-tangent-space}
\end{equation}

Then the linearized Neumann-to-Dirichlet map is injective on \(T^\Phi_{z^\dagger}\), that is,
\begin{equation}
\delta a\in T^\Phi_{z^\dagger}
\quad\text{and}\quad
D\mathcal{B}_{g_N}(a^\dagger)[\delta a]=0
\quad \forall g_N\in \mathcal{G}
\quad\Longrightarrow\quad
\delta a=0
\label{eq:linearized-injectivity-tangent-space}
\end{equation}
\end{thm}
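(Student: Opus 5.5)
The plan is to use the standard Alessandrini-type identity, obtained by testing the sensitivity equation with a second background solution. Fix \(\delta a\in T^\Phi_{z^\dagger}\) with \(D\mathcal B_{g_N}(a^\dagger)[\delta a]=0\) for all \(g_N\in\mathcal G\). Take two tests \(g_N,\tilde g_N\in\mathcal G\), with background states \(u=u(a^\dagger;g_N)\) and \(\tilde u=u(a^\dagger;\tilde g_N)\), and let \(w=w(\delta a;g_N)\).

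First, insert \(v=\tilde u\in V\) into the sensitivity problem \eqref{eq:sensitivity-problem-reference-coefficient}. This gives
\begin{equation*}
A_{a^\dagger}(w,\tilde u)=-(\delta a\nabla u,\nabla\tilde u)_\Omega .
\end{equation*}

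Second, use the symmetry of \(A_{a^\dagger}\) together with the state equation \eqref{eq:weak-formulation-continuous} for \(\tilde u\), tested with \(v=w\in V\). This yields
\begin{equation*}
A_{a^\dagger}(w,\tilde u)=A_{a^\dagger}(\tilde u,w)=\langle\tilde g_N,\gamma w\rangle_{\partial\Omega}
=\langle\tilde g_N,D\mathcal B_{g_N}(a^\dagger)[\delta a]\rangle_{\partial\Omega},
\end{equation*}
where the last equality is \eqref{eq:linearized-neumann-to-dirichlet-map}. By hypothesis the right-hand side vanishes. Combining the two displays gives \((\delta a\nabla u,\nabla\tilde u)_\Omega=0\).

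Since \(g_N,\tilde g_N\in\mathcal G\) were arbitrary, this identity holds for all \(u,\tilde u\in\mathcal U(a^\dagger)\). The separation condition \eqref{eq:separation-condition-tangent-space} then forces \(\delta a=0\), which is \eqref{eq:linearized-injectivity-tangent-space}.

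There is no real obstacle here. The only points needing care are that all test functions lie in the boundary-mean-zero space \(V\), which holds because both \(\tilde u\) and \(w\) belong to \(V\), and that the duality pairing is well defined. The latter holds because \(\gamma w\in H^{1/2}_\diamond(\partial\Omega)\) by Proposition~\ref{prop:differentiability-of-the-boundary-map}. The trace of \(w\) vanishes identically by hypothesis, so the identity holds without any normalization subtleties.
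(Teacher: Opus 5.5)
Your proposal is correct and matches the paper's proof: both test the sensitivity equation with a second background state $\tilde u\in\mathcal U(a^\dagger)$ and use the symmetry of $A_{a^\dagger}$ together with the state equation tested with $w$, giving $-(\delta a\nabla u,\nabla\tilde u)_\Omega=\langle\tilde g_N,\gamma w\rangle_{\partial\Omega}=0$. The separation condition then gives $\delta a=0$.
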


\begin{proof}
Let \(\delta a\in T^\Phi_{z^\dagger}\) have vanishing linearized data for
every \(g_N\in\mathcal G\). For \(u=u(a^\dagger;g_N)\), the associated
sensitivity \(w\) then satisfies \(\gamma w=0\). If
\(v=u(a^\dagger;h_N)\) with \(h_N\in\mathcal G\), symmetry and the weak
state and sensitivity equations give
\begin{equation}
-(\delta a\nabla u,\nabla v)_\Omega
=A_{a^\dagger}(w,v)
=A_{a^\dagger}(v,w)
=\langle h_N,\gamma w\rangle_{\partial\Omega}
=0
\label{eq:orthogonality-identity-tangent-space}
\end{equation}
The separation condition \eqref{eq:separation-condition-tangent-space}
therefore gives \(\delta a=0\).
\end{proof}

A sufficient full-data criterion for \eqref{eq:separation-condition-tangent-space}
is proved in Appendix~\ref{sec:cgo-verification-of-the-separation-condition}
using CGO solutions and the arguments from \cite{SU87}. It applies to smooth
strictly positive reference coefficients in dimension \(d\ge 3\) when
\(\mathcal G=H^{-1/2}_\diamond(\partial\Omega)\), or when \(\mathcal G\)
contains the real and imaginary parts of the conormal traces of the
CGO-generated conductivity solutions.
For a constant reference coefficient,
Corollary~\ref{cor:constant-reference-two-dimensional-family} verifies separation
in two dimensions using harmonic exponentials.

\paragraph{Selection of Finitely Many Neumann Tests.}
\label{par:selection-of-finitely-many-neumann-tests}
Let \(\{e_j\}_{j=1}^m\) denote the canonical basis of \(\mathbb{R}^m\). For each \(g_N\in \mathcal{G}\), we define the linearized responses by

\begin{equation}
    q_{j}(g_N)=\partial_{e_j}\bigl(\mathcal{N}_{\Phi(z)}g_N\bigr)\big|_{z=z^{\dagger}}\in H^{1/2}_{\diamond}(\partial\Omega)
    \label{eq:linearized-responses}
\end{equation}
Define the tangent basis directions by
\begin{equation}
\delta a_j = D\Phi(z^\dagger)e_j,
\quad j=1,\dots,m
\label{eq:tangent-basis-directions-continuous-theory}
\end{equation}
The chain rule then gives
\begin{equation}
q_j(g_N)
=
D\mathcal{B}_{g_N}(a^\dagger)[\delta a_j]
\in H^{1/2}_{\diamond}(\partial\Omega),
\quad j=1,\dots,m
\label{eq:linearized-responses-tangent-basis-continuous}
\end{equation}
Next, we define the associated Gram matrix
\begin{equation}
M(g_N)_{ij}
=
\bigl(
q_i(g_N),
q_j(g_N)
\bigr)_{H^{1/2}(\partial\Omega)},
\quad i,j=1,\dots,m
\label{eq:single-test-gram-matrix-continuous}
\end{equation}
For a finite family
\(\mathbf g_N=(g_N^1,\dots,g_N^L)\in \mathcal{G}^L\), we set
\begin{equation}
M_L
=
\sum_{\ell=1}^L M(g_N^\ell)
\label{eq:cumulative-gram-matrix-continuous}
\end{equation}
For \(\eta=(\eta_1,\dots,\eta_m)^\top\in \mathbb{R}^m\), let
\begin{equation}
\delta a_\eta
=
\sum_{j=1}^m \eta_j \delta a_j
\label{eq:latent-tangent-direction-continuous}
\end{equation}
Then
\begin{equation}
\eta^\top M_L \eta
=
\sum_{\ell=1}^L
\bigl\|
D\mathcal{B}_{g_N^\ell}(a^\dagger)[\delta a_\eta]
\bigr\|_{H^{1/2}(\partial\Omega)}^2
=
\bigl\|DN_L^\Phi(z^\dagger)\eta\bigr\|_{Y_L}^2
\label{eq:gram-identity-continuous}
\end{equation}

\begin{thm}[Finite tests from linearized injectivity]
\label{thm:finite-tests-from-linearized-injectivity}
Assume (A1), (A2), and \eqref{eq:linearized-injectivity-tangent-space}. Then
there exist \(1\le L\le m\)
tests \(g_N^1,\dots,g_N^L\in \mathcal{G}\) such that the corresponding
cumulative Gram matrix \(M_L\) is positive definite. Equivalently,
\(DN_L^\Phi(z^\dagger)\) has full column rank.
\end{thm}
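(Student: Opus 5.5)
The plan is a greedy dimension-reduction argument on kernels of the positive semidefinite matrices \(M_L\). For a single test \(g_N\in\mathcal G\), the Gram identity \eqref{eq:gram-identity-continuous} with \(L=1\) gives
\begin{equation*}
\eta^\top M(g_N)\eta=\bigl\|D\mathcal B_{g_N}(a^\dagger)[\delta a_\eta]\bigr\|_{H^{1/2}(\partial\Omega)}^2\ge 0 .
\end{equation*}
Hence each \(M(g_N)\) is symmetric positive semidefinite, and so is every cumulative matrix \(M_L\). For positive semidefinite matrices the kernel of a sum is the intersection of the kernels. Therefore
\begin{equation*}
\ker M_L=\bigcap_{\ell=1}^L \ker M(g_N^\ell),
\qquad
\ker M(g_N)=\bigl\{\eta: D\mathcal B_{g_N}(a^\dagger)[\delta a_\eta]=0\bigr\}.
\end{equation*}
The goal is then to pick tests so that this intersection shrinks to \(\{0\}\) in at most \(m\) steps.

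The key step uses the injectivity hypothesis. Suppose \(K\subset\mathbb R^m\) is a nonzero subspace and fix \(0\ne\eta\in K\). By (A2), \(D\Phi(z^\dagger)\) is injective, so \(\delta a_\eta=D\Phi(z^\dagger)\eta\) is a nonzero element of \(T^\Phi_{z^\dagger}\). By \eqref{eq:linearized-injectivity-tangent-space}, there is some \(g_N\in\mathcal G\) with \(D\mathcal B_{g_N}(a^\dagger)[\delta a_\eta]\ne0\). Then \(\eta\notin\ker M(g_N)\), so \(K\cap\ker M(g_N)\) is a proper subspace of \(K\) and has strictly smaller dimension.

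The selection proceeds as follows:
\begin{itemize}
\item Start from \(K_0=\mathbb R^m\), which is nonzero since \(m\ge1\).
\item At step \(\ell\), if \(K_{\ell-1}\ne\{0\}\), choose \(g_N^\ell\) as above and set \(K_\ell=K_{\ell-1}\cap\ker M(g_N^\ell)=\ker M_\ell\).
\item Since the dimension drops by at least one per step, \(K_L=\{0\}\) for some \(1\le L\le m\).
\end{itemize}
At that point \(M_L\) is positive semidefinite with trivial kernel, hence positive definite.

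For the stated equivalence, \eqref{eq:gram-identity-continuous} gives \(\eta^\top M_L\eta=\|DN_L^\Phi(z^\dagger)\eta\|_{Y_L}^2\). So \(M_L\) is positive definite exactly when \(DN_L^\Phi(z^\dagger)\eta\ne0\) for all \(\eta\ne0\), that is, when \(DN_L^\Phi(z^\dagger)\) has full column rank. The derivative formula here is Proposition~\ref{prop:differentiability-of-the-latent-reduced-map}.

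The only delicate point is the first bullet's requirement that the \(q_j(g_N)\) are well defined, so that the Gram matrices make sense. This is guaranteed by (A1) together with Proposition~\ref{prop:differentiability-of-the-boundary-map} and the chain rule, and no compactness or continuity argument in \(g_N\) is needed.
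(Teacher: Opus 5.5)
Your proposal is correct and follows essentially the same greedy kernel-reduction argument as the paper. The only difference is that the paper works with the kernels of the linear maps \(A_{g_N}\eta=D\mathcal B_{g_N}(a^\dagger)[D\Phi(z^\dagger)\eta]\) rather than those of the Gram matrices \(M(g_N)\); these coincide by the Gram identity, so the two formulations are interchangeable.
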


\begin{proof}
For each \(g_N\in\mathcal G\), define the linear map
\begin{equation}
A_{g_N}\eta=D\mathcal B_{g_N}(a^\dagger)[D\Phi(z^\dagger)\eta]
\quad \eta\in\mathbb R^m
\label{eq:linearized-test-map}
\end{equation}
Assumption (A2) and \eqref{eq:linearized-injectivity-tangent-space} give
\(\bigcap_{g_N\in\mathcal G}\ker A_{g_N}=\{0\}\).
Set \(W_0=\mathbb R^m\). Whenever \(W_{\ell-1}\ne\{0\}\), choose a nonzero
\(\eta\in W_{\ell-1}\) and a test \(g_N^\ell\in\mathcal G\) for which
\(A_{g_N^\ell}\eta\ne0\), and set
\begin{equation}
W_\ell=W_{\ell-1}\cap\ker A_{g_N^\ell}
\quad\text{so that}\quad
\dim W_\ell<\dim W_{\ell-1}
\label{eq:finite-test-kernel-reduction}
\end{equation}
Thus \(W_L=\{0\}\) after at most \(m\) steps. For the selected tests,
\eqref{eq:gram-identity-continuous} gives
\(\eta^\top M_L\eta=\sum_{\ell=1}^L\|A_{g_N^\ell}\eta\|_{H^{1/2}(\partial\Omega)}^2>0\)
for every nonzero \(\eta\), proving the claim.
\end{proof}

The bound \(L\le m\) counts Neumann excitations, each producing a full
Dirichlet trace, rather than scalar measurements. The selection depends on the
reference point and does not certify any prescribed family of tests or a
single family valid throughout the latent model.

\begin{thm}[Local stability from finite tests]
\label{thm:local-injectivity-from-finite-tests}
Under Assumption (A1), assume that \(M_L\) is positive definite. Then there
exist a convex neighborhood \(U\subset \mathcal{Z}\) of \(z^\dagger\) and
constants \(c_L,C_{\Phi,L}>0\) such that
\begin{equation}
\|z_1-z_2\|
\le
c_L
\|N_L^\Phi(z_1)-N_L^\Phi(z_2)\|_{Y_L}
\quad \forall z_1,z_2\in U
\label{eq:local-stability-finite-tests}
\end{equation}
and
\begin{equation}
\|\Phi(z_1)-\Phi(z_2)\|_{L^\infty(\Omega)}
\le
C_{\Phi,L}
\|N_L^\Phi(z_1)-N_L^\Phi(z_2)\|_{Y_L}
\quad \forall z_1,z_2\in U
\label{eq:local-coefficient-stability-continuous-finite-tests}
\end{equation}
In particular, \(N_L^\Phi\) is injective on \(U\).
\end{thm}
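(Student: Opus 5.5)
Positive definiteness of \(M_L\) together with the Gram identity \eqref{eq:gram-identity-continuous} gives a lower bound on the derivative at the reference point:
\begin{equation*}
\|DN_L^\Phi(z^\dagger)\eta\|_{Y_L}^2=\eta^\top M_L\eta\ge \lambda_{\min}(M_L)\|\eta\|^2 .
\end{equation*}
Set \(\lambda_0=\lambda_{\min}(M_L)^{1/2}>0\). The plan is to propagate this bound to a small ball around \(z^\dagger\) by continuity of the derivative, and then to integrate along segments.

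By Proposition~\ref{prop:differentiability-of-the-latent-reduced-map}, \(N_L^\Phi\in C^1\) near \(z^\dagger\), which is an interior point. Hence we can choose a closed ball \(U=\overline B(z^\dagger,\rho)\subset\mathcal Z\). We take \(\rho\) small enough that the open ball lies in the \(C^1\)-extension domain and
\begin{equation*}
\|DN_L^\Phi(z)-DN_L^\Phi(z^\dagger)\|_{\mathcal L(\mathbb R^m,Y_L)}\le \lambda_0/2 \quad\text{for all } z\in U .
\end{equation*}
If an open set is preferred, take the open ball instead; it is convex either way.

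For \(z_1,z_2\in U\), convexity puts the segment \(z_t=z_2+t(z_1-z_2)\) inside \(U\). The fundamental theorem of calculus for \(C^1\) maps into the Banach space \(Y_L\), applied via a Bochner or Riemann integral, gives
\begin{equation*}
N_L^\Phi(z_1)-N_L^\Phi(z_2)=\int_0^1 DN_L^\Phi(z_t)(z_1-z_2)\,dt .
\end{equation*}
Splitting \(DN_L^\Phi(z_t)=DN_L^\Phi(z^\dagger)+\bigl(DN_L^\Phi(z_t)-DN_L^\Phi(z^\dagger)\bigr)\) and using the reverse triangle inequality yields
\begin{equation*}
\|N_L^\Phi(z_1)-N_L^\Phi(z_2)\|_{Y_L}\ge \lambda_0\|z_1-z_2\|-\tfrac{\lambda_0}{2}\|z_1-z_2\|=\tfrac{\lambda_0}{2}\|z_1-z_2\| .
\end{equation*}
This is \eqref{eq:local-stability-finite-tests} with \(c_L=2/\lambda_0\), and injectivity on \(U\) follows at once.

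For the coefficient estimate, (A1) lets us shrink \(\rho\) further, if necessary, so that \(\sup_{z\in U}\|D\Phi(z)\|_{\mathcal L(\mathbb R^m,L^\infty)}\le K<\infty\). The same segment argument then gives \(\|\Phi(z_1)-\Phi(z_2)\|_{L^\infty(\Omega)}\le K\|z_1-z_2\|\). Combining this with the first estimate yields \eqref{eq:local-coefficient-stability-continuous-finite-tests} with \(C_{\Phi,L}=Kc_L\).

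The only point needing some care is the mean-value step in the infinite-dimensional target \(Y_L\). It is handled either by the integral form above or, more elementarily, by pairing with a norming functional: pick \(\psi\in Y_L^*\) with \(\|\psi\|=1\) and \(\psi(N_L^\Phi(z_1)-N_L^\Phi(z_2))=\|N_L^\Phi(z_1)-N_L^\Phi(z_2)\|_{Y_L}\), then apply the scalar mean value theorem to \(t\mapsto\psi(N_L^\Phi(z_t))\). The functional route only bounds the difference from above, so for the lower bound the integral form is the one to use. Otherwise the argument is routine, and no use of (A2) is needed beyond its role in making \(M_L\) positive definite.
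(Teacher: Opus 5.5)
Your proposal is correct and follows the paper's proof essentially step for step: the Gram identity gives the lower bound with $c_0=\sqrt{\lambda_{\min}(M_L)}$, a convex neighborhood is chosen on which $DN_L^\Phi$ stays within $c_0/2$ of $DN_L^\Phi(z^\dagger)$, and the segment expansion gives $c_L=2/c_0$. The local Lipschitz bound for $\Phi$ then gives $C_{\Phi,L}=L_{\Phi,U}c_L$; your aside that the norming-functional route only yields upper bounds is overly cautious, since applying the mean value inequality to $z\mapsto N_L^\Phi(z)-DN_L^\Phi(z^\dagger)z$ also gives the lower bound, but nothing in your argument depends on it.
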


\begin{proof}
By \eqref{eq:gram-identity-continuous}, positive definiteness of \(M_L\)
implies that the derivative \(DN_L^\Phi(z^\dagger)\) has full column rank.
Hence there exists \(c_0>0\) such that
\begin{equation}
\|DN_L^\Phi(z^\dagger)\eta\|_{Y_L}
\ge
c_0 \|\eta\|
\quad \forall \eta\in \mathbb{R}^m
\label{eq:uniform-lower-bound-derivative}
\end{equation}
Since \(DN_L^\Phi\) is continuous, there exists a convex neighborhood \(U\) of \(z^\dagger\) such that
\begin{equation}
\|DN_L^\Phi(z)-DN_L^\Phi(z^\dagger)\|_{\mathcal{L}(\mathbb{R}^m,Y_L)}
\le
\frac{c_0}{2}
\quad \forall z\in U
\label{eq:derivative-close-to-reference}
\end{equation}
Let \(z_1,z_2\in U\), and set
\begin{equation}
z(t)=z_2+t(z_1-z_2)
\quad t\in[0,1]
\label{eq:segment-between-latent-points}
\end{equation}
Then
\begin{equation}
N_L^\Phi(z_1)-N_L^\Phi(z_2)
=
DN_L^\Phi(z^\dagger)(z_1-z_2)
+
\int_0^1
\bigl(
DN_L^\Phi(z(t))-DN_L^\Phi(z^\dagger)
\bigr)
(z_1-z_2)\,dt
\label{eq:mean-value-expansion-finite-tests}
\end{equation}
Using \eqref{eq:uniform-lower-bound-derivative} and \eqref{eq:derivative-close-to-reference}, we obtain
\begin{equation}
\|N_L^\Phi(z_1)-N_L^\Phi(z_2)\|_{Y_L}
\ge
\frac{c_0}{2}\|z_1-z_2\|
\label{eq:lower-bound-forward-difference}
\end{equation}
Thus \eqref{eq:local-stability-finite-tests} holds with \(c_L=2/c_0\).
After shrinking \(U\) if necessary, continuity of \(D\Phi\) gives a constant
\(L_{\Phi,U}>0\) such that
\begin{equation}
\|\Phi(z_1)-\Phi(z_2)\|_{L^\infty(\Omega)}
\le
L_{\Phi,U}\|z_1-z_2\|
\quad \forall z_1,z_2\in U\label{eq:local-lipschitz-bound-latent-map-continuous}
\end{equation}
Combining this estimate with \eqref{eq:local-stability-finite-tests} proves
\eqref{eq:local-coefficient-stability-continuous-finite-tests} with
\(C_{\Phi,L}=L_{\Phi,U}c_L\). Injectivity on \(U\) follows immediately from
\eqref{eq:local-stability-finite-tests}. \qedhere
\end{proof}

\paragraph{Scope of the Continuous Theory.}
\label{par:scope-of-the-continuous-theory}
The finite-test selection in
Theorem~\ref{thm:finite-tests-from-linearized-injectivity} is existential and
depends on the reference point, in particular on
\(a^\dagger=\Phi(z^\dagger)\) and the tangent space
\(T^\Phi_{z^\dagger}\). The theorem therefore does not provide a single family
of tests that works uniformly over \(\mathcal M_{\mathrm{lat}}\). The boundary
experiments used in Section~\ref{sec:numerical-experiments} are prescribed
computationally and are not obtained from this existence argument. The
stability analysis is local within the latent model about
\(a^\dagger=\Phi(z^\dagger)\).
Section~\ref{sec:discrete-theory-for-the-latent-forward-map} transfers this
stability to the finite element and neural surrogate approximations, and
Remark~\ref{rem:representation-mismatch-error} quantifies representation mismatch
relative to a comparison coefficient in the local stability set.

%% file: sec-analysis-discrete-rev0.tex
\section[Discrete and Surrogate Theory for Latent Forward Maps]{Discrete and Surrogate Theory\\ for Latent Forward Maps}
\label{sec:discrete-theory-for-the-latent-forward-map}

\paragraph{Overview.}
We transfer the continuous local stability to the finite element map through
convergence of the coefficient sensitivities
(Theorem~\ref{thm:local-stability-discrete-finite-test-map}). We then distinguish
two consequences of surrogate accuracy: derivative accuracy preserves the surrogate's own
local injectivity and stability
(Theorem~\ref{thm:stability-under-small-c1-perturbations}), while uniform value
accuracy suffices for a reconstruction-error estimate under residual comparison
(Corollary~\ref{cor:material-image-error-surrogate-residual-minimizers}).
Corollary~\ref{cor:l2-observation-stability} gives the corresponding conclusions
for the \(L^2\) boundary norm used in the computations.

\subsection{Discrete Well-Posedness and Differentiability}
\label{subsec:uniform-discrete-well-posedness}

For a fixed \(F\in(V_h^0)'\), independent of \(z\), consider
\begin{equation}
A_{\Phi(z)}(u_h(z),v_h)=F(v_h)
\quad \forall v_h\in V_h^0
\label{eq:discrete-state-problem-uniform-well-posedness}
\end{equation}
For the \(\ell\)th boundary experiment, we take \(F=F_{g_N^\ell}|_{V_h^0}\) and
write \(u_h^\ell(z)=u_h(\Phi(z);g_N^\ell)\).

\begin{prop}[Discrete well-posedness and differentiability]
\label{prop:c1-regularity-discrete-forward-map}
Assume (A1). The discrete state problem has a unique solution, and the state
map \(z\mapsto u_h(z)\in V_h^0\) and finite-test map
\(N_{L,h}^\Phi:\mathcal Z\to Y_{L,h}\) are of class \(C^1\). For each
\(\eta\in\mathbb R^m\), the sensitivity \(w_h(z;\eta)=Du_h(z)\eta\) satisfies
\begin{equation}
A_{\Phi(z)}(w_h(z;\eta),v_h)
=-(D\Phi(z)\eta\,\nabla u_h(z),\nabla v_h)_\Omega
\quad \forall v_h\in V_h^0
\label{eq:discrete-first-derivative-problem}
\end{equation}
The uniform bounds are
\begin{align}
\|u_h(z)\|_{H^1(\Omega)}
&\lesssim \|F\|_{(V_h^0)'}
\label{eq:discrete-uniform-stability}\\
\|w_h(z;\eta)\|_{H^1(\Omega)}
&\lesssim \|D\Phi(z)\eta\|_{L^\infty(\Omega)}\|u_h(z)\|_{H^1(\Omega)}
\label{eq:discrete-first-derivative-stability}
\end{align}
with constants independent of \(z\) and \(h\). In particular,
\begin{equation}
DN_{L,h}^\Phi(z)\eta
=\bigl(w_h^1(z;\eta)|_{\partial\Omega},\dots,w_h^L(z;\eta)|_{\partial\Omega}\bigr)
\label{eq:discrete-forward-map-derivative}
\end{equation}
where
\begin{equation}
A_{\Phi(z)}(w_h^\ell(z;\eta),v_h)
=-(D\Phi(z)\eta\,\nabla u_h^\ell(z),\nabla v_h)_\Omega
\quad \forall v_h\in V_h^0
\label{eq:discrete-forward-map-sensitivity-problem}
\end{equation}
\end{prop}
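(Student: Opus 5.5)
The plan is to follow the proof of Proposition~\ref{prop:differentiability-of-the-boundary-map}, carried out on the closed subspace \(V_h^0\subset V\) instead of \(V\).

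\textbf{Coercivity and well-posedness.} Every coefficient \(a=\Phi(z)\in\mathcal A_{\mathrm{ad}}\) satisfies
\[
A_a(v_h,v_h)\ge a_{\min}\|\nabla v_h\|_{L^2(\Omega)}^2 .
\]
For \(v_h\in V_h^0\subset V\), the boundary-mean Poincar\'e inequality gives \(\|v_h\|_{H^1(\Omega)}\lesssim\|\nabla v_h\|_{L^2(\Omega)}\). The constant in this inequality is that of \(V\), so it does not depend on \(h\). Hence \(A_a\) is coercive on \(V_h^0\) with constant \(\alpha\gtrsim a_{\min}\), uniformly in \(z\) and \(h\), and it is bounded by \(a_{\max}\). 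Lax--Milgram then gives a unique solution \(u_h(z)\) together with \eqref{eq:discrete-uniform-stability}. The same coercivity extends to an \(L^\infty\)-neighborhood of uniformly positive coefficients, which is needed for differentiation.

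\textbf{Differentiability in the coefficient.} Fix \(F\) and consider the map \(a\mapsto u_h(a)\). Let \(b\) be a small perturbation, and let \(w_h\) solve the discrete sensitivity problem with right-hand side \(-(b\nabla u_h(a),\nabla v_h)_\Omega\). Subtracting the state equations, exactly as in \eqref{eq:continuous-solution-perturbation-estimate}, gives
\[
\|u_h(a+b)-u_h(a)\|_{H^1(\Omega)}\lesssim\|b\|_{L^\infty(\Omega)}\|F\|_{(V_h^0)'} .
\]
The remainder \(r_b=u_h(a+b)-u_h(a)-w_h\) satisfies \(A_a(r_b,v_h)=-(b\nabla(u_h(a+b)-u_h(a)),\nabla v_h)_\Omega\). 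This yields a bound of order \(\|b\|_{L^\infty(\Omega)}^2\), so \(u_h\) is Fr\'echet differentiable in \(a\).

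For continuity of the derivative, subtract the sensitivity equations at \(a_1\) and \(a_2\) as in \eqref{eq:continuous-sensitivity-perturbation-equation}. Combined with the Lipschitz bound on the states, this gives a Lipschitz estimate on \(a\mapsto D_au_h(a)\). Because \(V_h^0\) is finite-dimensional, one could instead apply the implicit function theorem to the stiffness system \(K(a)U=F\), with \(K\) affine in \(a\) and invertible. The direct estimates are preferable here because they give constants independent of \(h\).

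\textbf{Composition with \(\Phi\) and uniform bounds.} The chain rule with (A1) shows that \(z\mapsto u_h(\Phi(z))\) is \(C^1\), with derivative \(w_h(z;\eta)\) solving \eqref{eq:discrete-first-derivative-problem}. Testing that equation with \(v_h=w_h\) and using uniform coercivity gives
\[
\alpha\|w_h\|_{H^1(\Omega)}^2\le\|D\Phi(z)\eta\|_{L^\infty(\Omega)}\|\nabla u_h\|_{L^2(\Omega)}\|\nabla w_h\|_{L^2(\Omega)},
\]
which is \eqref{eq:discrete-first-derivative-stability}. The trace \(\gamma:V_h^0\to W_h\subset H^{1/2}(\partial\Omega)\) is bounded and linear, so applying it componentwise with \(F=F_{g_N^\ell}\) shows that \(N_{L,h}^\Phi\) is \(C^1\) with derivative \eqref{eq:discrete-forward-map-derivative}.

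\textbf{Main obstacle.} The only step that needs care is making every constant independent of \(h\) and \(z\). This rests on two facts. First, the discrete coercivity constant is inherited from the continuous Poincar\'e inequality, since \(V_h^0\subset V\) is conforming. Second, \(\|F_{g_N}\|_{(V_h^0)'}\le\|F_{g_N}\|_{V'}\lesssim\|g_N\|_{H^{-1/2}(\partial\Omega)}\). No inf-sup argument or mesh-dependent inverse estimate is needed.
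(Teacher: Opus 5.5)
Your proof is correct. The well-posedness step and both uniform bounds match the paper: Lax--Milgram with the \(h\)-independent Poincar\'e constant inherited from \(V\), then testing the sensitivity equation with \(w_h\).

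The difference lies in the differentiability step. The paper writes the state in a basis of \(V_h^0\) as \(U(z)=K_h(z)^{-1}f_h\). It notes that \(K_h\) is \(C^1\), since its entries \((\Phi(z)\nabla\varphi_j,\nabla\varphi_i)_\Omega\) depend boundedly and linearly on \(\Phi(z)\), and that \(K_h\) is invertible. Differentiating \(K_hU=f_h\) then gives \(C^1\)-regularity and \eqref{eq:discrete-first-derivative-problem}. This is exactly the route you mention and set aside. You instead repeat the Fr\'echet remainder and Lipschitz-derivative argument of Proposition~\ref{prop:differentiability-of-the-boundary-map} on \(V_h^0\), followed by the chain rule.

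Your stated reason for preferring this is slightly off. The only \(h\)-independent constants the proposition asserts are those in \eqref{eq:discrete-uniform-stability} and \eqref{eq:discrete-first-derivative-stability}. Both routes obtain these from coercivity alone. \(C^1\)-regularity at fixed \(h\) is a qualitative statement, so the matrix argument is shorter and fully sufficient here.

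What your route genuinely buys is an \(h\)-uniform Lipschitz bound for \(a\mapsto D_a u_h(a)\). This proposition does not need it. However, it is essentially the estimate the paper later has to prove separately, in \eqref{eq:discrete-state-uniform-continuity}--\eqref{eq:discrete-derivative-uniform-continuity}. There it is used to obtain a neighborhood independent of \(h\) in Theorem~\ref{thm:local-stability-discrete-finite-test-map}.
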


\begin{proof}
The bounds \(a_{\min}\le\Phi(z)\le a_{\max}\) and the Poincar\'e
inequality on the boundary-mean-zero space give continuity and coercivity of
\(A_{\Phi(z)}\), uniformly in \(z\) and \(h\). Lax--Milgram therefore gives
existence, uniqueness, and \eqref{eq:discrete-uniform-stability}. In any basis
of \(V_h^0\), the state coefficients solve \(K_h(z)U(z)=f_h\), where
\(K_h\) is \(C^1\) and invertible. Thus \(U=K_h^{-1}f_h\) is \(C^1\),
and differentiation gives \eqref{eq:discrete-first-derivative-problem}.
Testing with \(w_h\) and using coercivity yields
\eqref{eq:discrete-first-derivative-stability}. Finally, boundedness of the
trace operator gives the forward-map assertions.
\end{proof}

The same matrix argument gives \(C^k\)-regularity when \(\Phi\) is \(C^k\).
For data \(y\in Y_L\) and a regularization term \(\mathcal R\in C^1(\mathcal Z)\),
we define the objective, following \cite{BMPS18}, by
\begin{equation}
J_{L,h}(z)=\frac12\|N_{L,h}^\Phi(z)-y\|_{Y_L}^2+\mathcal R(z)
\label{eq:discrete-objective-functional-finite-tests}
\end{equation}
It is of class \(C^1\), with derivative
\begin{equation}
DJ_{L,h}(z)\eta
=\bigl(N_{L,h}^\Phi(z)-y,DN_{L,h}^\Phi(z)\eta\bigr)_{Y_L}
+D\mathcal R(z)\eta
\label{eq:discrete-objective-functional-derivative}
\end{equation}

\subsection{Derivative Error Estimates}
\label{subsec:derivative-error-estimates}

We now compare the continuous and discrete finite-test maps and their first
derivatives for the Neumann data fixed in
\eqref{eq:finite-family-neumann-data}. For each \(\ell=1,\dots,L\) and
\(z\in\mathcal Z\), we use the continuous and discrete states \(u^\ell(z)\) and
\(u_h^\ell(z)\) from \eqref{eq:background-state-latent-direction} and
\eqref{eq:discrete-weak-formulation}, respectively. Their sensitivities
\(w^\ell(z;\eta)\) and \(w_h^\ell(z;\eta)\) are characterized by
\eqref{eq:sensitivity-problem-latent-direction} and
\eqref{eq:discrete-forward-map-sensitivity-problem}. We first record the
standard best-approximation estimate for the discrete state.

\begin{prop}[Best-approximation estimate for the discrete state]
\label{prop:state-error-estimate}
Assume that \(\Phi(\mathcal{Z})\subset \mathcal{A}_{\mathrm{ad}}\). Then, for every \(z\in\mathcal{Z}\) and every \(\ell=1,\dots,L\),
\begin{equation}
\|u^\ell(z)-u_h^\ell(z)\|_{H^1(\Omega)}\lesssim \inf_{v_h\in V_h^0}\|u^\ell(z)-v_h\|_{H^1(\Omega)}
\label{eq:state-error-best-approximation}
\end{equation}
with a constant independent of \(z\), \(\ell\), and \(h\).
\end{prop}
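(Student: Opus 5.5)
This is Céa's lemma on the boundary-mean-zero space, applied uniformly in \(z\). The plan is to establish Galerkin orthogonality between the continuous and discrete problems, and then combine uniform coercivity and continuity of \(A_{\Phi(z)}\) in the usual way. Throughout, fix \(z\in\mathcal Z\) and \(\ell\), and write \(a=\Phi(z)\in\mathcal A_{\mathrm{ad}}\), \(u=u^\ell(z)\in V\), \(u_h=u_h^\ell(z)\in V_h^0\).

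\emph{Step 1 (conformity and Galerkin orthogonality).} Since \(V_h\subset H^1(\Omega)\) and the constraint \((\gamma v_h,1)_{\partial\Omega}=0\) is the same as in \(V\), we have \(V_h^0\subset V\). Testing \eqref{eq:weak-formulation-continuous} with \(v=v_h\in V_h^0\) and subtracting \eqref{eq:discrete-weak-formulation} gives
\[
A_a(u-u_h,v_h)=0\qquad\forall v_h\in V_h^0 .
\]

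\emph{Step 2 (uniform continuity and coercivity).} Continuity follows from \(a\le a_{\max}\):
\[
|A_a(v,w)|\le a_{\max}\|v\|_{H^1(\Omega)}\|w\|_{H^1(\Omega)}.
\]
For coercivity, \(a\ge a_{\min}\) gives \(A_a(v,v)\ge a_{\min}\|\nabla v\|_{L^2(\Omega)}^2\). The boundary-mean Poincaré inequality \(\|v\|_{L^2(\Omega)}\le C_P\|\nabla v\|_{L^2(\Omega)}\) on \(V\) then yields
\[
A_a(v,v)\ge \alpha\|v\|_{H^1(\Omega)}^2,\qquad \alpha=a_{\min}/(1+C_P^2).
\]
Here \(C_P\) depends only on \(\Omega\), so \(\alpha\) is independent of \(z\), \(\ell\) and \(h\). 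This same uniform well-posedness was already used in Proposition~\ref{prop:c1-regularity-discrete-forward-map}.

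\emph{Step 3 (Céa estimate).} Let \(v_h\in V_h^0\) be arbitrary. Coercivity, Galerkin orthogonality (which lets us replace \(u-u_h\) by \(u-v_h\) in the second slot, since \(v_h-u_h\in V_h^0\)) and continuity give
\[
\alpha\|u-u_h\|_{H^1(\Omega)}^2\le A_a(u-u_h,u-u_h)=A_a(u-u_h,u-v_h)\le a_{\max}\|u-u_h\|_{H^1(\Omega)}\|u-v_h\|_{H^1(\Omega)}.
\]
Dividing by \(\|u-u_h\|_{H^1(\Omega)}\) and taking the infimum over \(v_h\in V_h^0\) gives \eqref{eq:state-error-best-approximation} with constant \(a_{\max}/\alpha\). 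Because \(A_a\) is symmetric, one could sharpen this to \(\sqrt{a_{\max}/\alpha}\), but that is not needed.

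The argument is routine. The only point needing care is the uniformity of the constant. It comes entirely from the admissible bounds \(a_{\min}\le a\le a_{\max}\) and the domain-dependent Poincaré constant, and no property of \(u\), \(g_N^\ell\) or \(\mathcal T_h\) enters. One should also note that the normalization is the boundary mean rather than the volume mean, so the Poincaré inequality used is the boundary-mean version. It holds on the connected Lipschitz domain \(\Omega\) by the standard compactness argument: the only constant function with zero boundary mean is zero.
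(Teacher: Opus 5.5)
Your proposal is correct and follows essentially the same route as the paper: Galerkin orthogonality from the conforming inclusion \(V_h^0\subset V\), then C\'ea's lemma with continuity and coercivity constants depending only on \(a_{\min}\), \(a_{\max}\), and the boundary-mean Poincar\'e constant. You make explicit the constant \(a_{\max}(1+C_P^2)/a_{\min}\) and its independence of \(z\), \(\ell\), and \(h\), which the paper leaves implicit.
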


\begin{proof}
The continuous and discrete weak problems give
\(A_{\Phi(z)}(u^\ell(z)-u_h^\ell(z),v_h)=0\) for all \(v_h\in V_h^0\).
Uniform continuity and coercivity of \(A_{\Phi(z)}\), together with the
Poincar\'e inequality, allow us to apply C\'ea's lemma and obtain
\eqref{eq:state-error-best-approximation}.
\end{proof}

We next compare the continuous and discrete sensitivities.

\begin{prop}[Best-approximation estimate for the discrete sensitivity]
\label{prop:first-derivative-error-estimate}
Assume that \(\Phi\in C^1(\mathcal{Z};L^{\infty}(\Omega))\) and that \(\Phi(\mathcal{Z})\subset \mathcal{A}_{\mathrm{ad}}\). Then, for every \(z\in\mathcal{Z}\), every \(\ell=1,\dots,L\), and every \(\eta\in\mathbb{R}^m\),
\begin{align}
\|w^\ell(z;\eta)-w_h^\ell(z;\eta)\|_{H^1(\Omega)}
&\lesssim
\inf_{v_h\in V_h^0}\|w^\ell(z;\eta)-v_h\|_{H^1(\Omega)}
\label{eq:first-derivative-error-best-approximation}\\
&\quad+
\|D\Phi(z)\eta\|_{L^{\infty}(\Omega)}\|u^\ell(z)-u_h^\ell(z)\|_{H^1(\Omega)}
\notag
\end{align}
with a constant independent of \(z\), \(\ell\), \(h\), and \(\eta\).
\end{prop}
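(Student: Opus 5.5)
We will argue by a Strang-type perturbation of C\'ea's lemma, treating the discrete sensitivity equation \eqref{eq:discrete-forward-map-sensitivity-problem} as a Galerkin approximation of the continuous sensitivity equation \eqref{eq:sensitivity-problem-latent-direction}. The only difference from the usual Galerkin setting is that the right-hand side is built from the discrete state \(u_h^\ell(z)\) rather than the exact state \(u^\ell(z)\). Fix \(z\), \(\ell\), and \(\eta\), and write \(a=\Phi(z)\), \(\delta a=D\Phi(z)\eta\), \(w=w^\ell(z;\eta)\), \(w_h=w_h^\ell(z;\eta)\), \(u=u^\ell(z)\), and \(u_h=u_h^\ell(z)\). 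Since \(V_h^0\subset V\), we may test the continuous equation with \(v_h\in V_h^0\). Subtracting the discrete equation then yields the perturbed Galerkin orthogonality
\begin{equation*}
A_a(w-w_h,v_h)=-(\delta a\,\nabla(u-u_h),\nabla v_h)_\Omega
\quad\forall v_h\in V_h^0 .
\end{equation*}

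Next, for an arbitrary \(v_h\in V_h^0\) we split \(w-w_h=(w-v_h)+(v_h-w_h)\) and set \(e_h=v_h-w_h\in V_h^0\). We then use uniform coercivity of \(A_a\) on \(V_h^0\subset V\), which follows from \(a\ge a_{\min}\) together with the boundary-mean Poincar\'e inequality, exactly as in Proposition~\ref{prop:c1-regularity-discrete-forward-map}. This gives
\begin{equation*}
\|e_h\|_{H^1(\Omega)}^2\lesssim A_a(e_h,e_h)=A_a(v_h-w,e_h)+A_a(w-w_h,e_h).
\end{equation*}
The first term is bounded by \(a_{\max}\|w-v_h\|_{H^1(\Omega)}\|e_h\|_{H^1(\Omega)}\). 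The second term, by the perturbed orthogonality and H\"older's inequality, is bounded by \(\|\delta a\|_{L^\infty(\Omega)}\|u-u_h\|_{H^1(\Omega)}\|e_h\|_{H^1(\Omega)}\). Dividing by \(\|e_h\|_{H^1(\Omega)}\) and applying the triangle inequality gives
\begin{equation*}
\|w-w_h\|_{H^1(\Omega)}\lesssim\|w-v_h\|_{H^1(\Omega)}+\|\delta a\|_{L^\infty(\Omega)}\|u-u_h\|_{H^1(\Omega)}.
\end{equation*}
Taking the infimum over \(v_h\in V_h^0\) yields \eqref{eq:first-derivative-error-best-approximation}.

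The main point to check is not a hard estimate but the uniformity of the constants. They depend only on \(a_{\min}\), \(a_{\max}\), and the Poincar\'e constant of \(\Omega\). These quantities do not depend on \(z\), since \(\Phi(\mathcal Z)\subset\mathcal A_{\mathrm{ad}}\). They do not depend on \(h\), since we use conforming spaces \(V_h^0\subset V\). They do not depend on \(\ell\) or \(\eta\), since the estimate is linear in \(\eta\) and enters only through \(\|\delta a\|_{L^\infty(\Omega)}\). The \(C^1\) hypothesis on \(\Phi\) is used only to ensure that \(\delta a\in L^\infty(\Omega)\), so that the two sensitivity problems are well defined by Propositions~\ref{prop:differentiability-of-the-latent-reduced-map} and~\ref{prop:c1-regularity-discrete-forward-map}. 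If desired, the state error term can afterwards be bounded by Proposition~\ref{prop:state-error-estimate}, but this is not needed for the statement.
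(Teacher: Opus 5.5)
Your proposal is correct and follows essentially the same argument as the paper. Both subtract the two sensitivity equations to get the perturbed Galerkin orthogonality, then apply coercivity to $v_h-w_h$, bound the two resulting terms by continuity and H\"older's inequality, and finish with the triangle inequality and the infimum over $v_h\in V_h^0$.
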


\begin{proof}
Subtracting the sensitivity problems
\eqref{eq:discrete-forward-map-sensitivity-problem} and
\eqref{eq:sensitivity-problem-latent-direction} and testing with
\(v_h\in V_h^0\), we obtain
\begin{equation}
A_{\Phi(z)}(w^\ell(z;\eta)-w_h^\ell(z;\eta),v_h)=-(D\Phi(z)\eta\,\nabla (u^\ell(z)-u_h^\ell(z)),\nabla v_h)_{\Omega}\quad \forall v_h\in V_h^0
\label{eq:first-derivative-error-galerkin-identity}
\end{equation}

Let \(v_h\in V_h^0\) be arbitrary and set
\begin{equation}
\chi_h=v_h-w_h^\ell(z;\eta)
\label{eq:first-derivative-error-chi-definition}
\end{equation}
Using \eqref{eq:first-derivative-error-galerkin-identity}, we obtain
\begin{align}
A_{\Phi(z)}(\chi_h,\chi_h)
&=A_{\Phi(z)}(v_h-w^\ell(z;\eta),\chi_h)-(D\Phi(z)\eta\,\nabla (u^\ell(z)-u_h^\ell(z)),\nabla \chi_h)_{\Omega}
\label{eq:first-derivative-error-cea-step-1}
\end{align}
Hence, by uniform coercivity and continuity,
\begin{align}
\|\chi_h\|_{H^1(\Omega)}^2
&\lesssim \|v_h-w^\ell(z;\eta)\|_{H^1(\Omega)}\|\chi_h\|_{H^1(\Omega)}
\label{eq:first-derivative-error-cea-step-2}\\
&\quad+\|D\Phi(z)\eta\|_{L^{\infty}(\Omega)}\|u^\ell(z)-u_h^\ell(z)\|_{H^1(\Omega)}\|\chi_h\|_{H^1(\Omega)}
\notag
\end{align}
Therefore
\begin{equation}
\|v_h-w_h^\ell(z;\eta)\|_{H^1(\Omega)}
\lesssim
\|w^\ell(z;\eta)-v_h\|_{H^1(\Omega)}
+
\|D\Phi(z)\eta\|_{L^{\infty}(\Omega)}\|u^\ell(z)-u_h^\ell(z)\|_{H^1(\Omega)}
\label{eq:first-derivative-error-cea-step-4}
\end{equation}
and the triangle inequality gives
\begin{equation}
\|w^\ell(z;\eta)-w_h^\ell(z;\eta)\|_{H^1(\Omega)}
\le
\|w^\ell(z;\eta)-v_h\|_{H^1(\Omega)}+\|v_h-w_h^\ell(z;\eta)\|_{H^1(\Omega)}
\label{eq:first-derivative-error-cea-step-5}
\end{equation}
Taking the infimum in \eqref{eq:first-derivative-error-cea-step-4} and using \eqref{eq:first-derivative-error-cea-step-5} gives \eqref{eq:first-derivative-error-best-approximation}.
\end{proof}

We now transfer the previous estimates to the continuous and discrete finite-test maps \(N_L^\Phi\) and \(N_{L,h}^\Phi\), defined in \eqref{eq:continuous-latent-finite-test-map} and \eqref{eq:training-target-finite-test-map}, respectively.

\begin{cor}[Error estimates for the discrete finite-test map]
\label{cor:forward-map-and-derivative-error-estimates}
Under the assumptions of Proposition~\ref{prop:first-derivative-error-estimate}, the following estimates hold for every \(z\in\mathcal{Z}\) and every \(\eta\in\mathbb{R}^m\):
\begin{align}
\|N_L^\Phi(z)-N_{L,h}^\Phi(z)\|_{Y_L}
&\lesssim
\Biggl(
\sum_{\ell=1}^L
\inf_{v_h\in V_h^0}\|u^\ell(z)-v_h\|_{H^1(\Omega)}^2
\Biggr)^{1/2}
\label{eq:forward-map-error-estimate}\\
\|DN_L^\Phi(z)\eta-DN_{L,h}^\Phi(z)\eta\|_{Y_L}
&\lesssim
\Biggl(
\sum_{\ell=1}^L
\Bigl(
\inf_{v_h\in V_h^0}\|w^\ell(z;\eta)-v_h\|_{H^1(\Omega)}
\Bigr)^2
\Biggr)^{1/2}
\label{eq:forward-map-derivative-error-estimate-1}\\
&\quad+
\|D\Phi(z)\eta\|_{L^{\infty}(\Omega)}
\Biggl(
\sum_{\ell=1}^L
\|u^\ell(z)-u_h^\ell(z)\|_{H^1(\Omega)}^2
\Biggr)^{1/2}
\notag
\end{align}
\end{cor}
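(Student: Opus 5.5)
The estimates follow componentwise by combining the trace inequality with Propositions~\ref{prop:state-error-estimate} and~\ref{prop:first-derivative-error-estimate}. By definition of the product norm on \(Y_L\),
\[
\|N_L^\Phi(z)-N_{L,h}^\Phi(z)\|_{Y_L}^2=\sum_{\ell=1}^L\|\gamma(u^\ell(z)-u_h^\ell(z))\|_{H^{1/2}(\partial\Omega)}^2 .
\]
Here the continuous and discrete responses are the traces of \(u^\ell(z)\in V\) and \(u_h^\ell(z)\in V_h^0\subset V\), both carrying the same boundary-mean normalization. The trace operator \(\gamma:H^1(\Omega)\to H^{1/2}(\partial\Omega)\) is bounded with a constant depending only on \(\Omega\). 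Hence each summand is bounded by \(C\|u^\ell(z)-u_h^\ell(z)\|_{H^1(\Omega)}^2\). Applying Proposition~\ref{prop:state-error-estimate}, whose constant is independent of \(z\), \(\ell\) and \(h\), and taking square roots gives \eqref{eq:forward-map-error-estimate}.

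For the derivative estimate I will use the representations \eqref{eq:derivative-of-the-latent-reduced-map} and \eqref{eq:discrete-forward-map-derivative}. They identify the \(\ell\)th component of \(DN_L^\Phi(z)\eta-DN_{L,h}^\Phi(z)\eta\) with \(\gamma(w^\ell(z;\eta)-w_h^\ell(z;\eta))\). The trace inequality then bounds the \(Y_L\)-norm squared by \(C\sum_\ell\|w^\ell-w_h^\ell\|_{H^1(\Omega)}^2\). Proposition~\ref{prop:first-derivative-error-estimate} controls each term by the sum of two quantities: the best-approximation error of \(w^\ell\), and \(\|D\Phi(z)\eta\|_{L^\infty(\Omega)}\|u^\ell-u_h^\ell\|_{H^1(\Omega)}\).

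To separate these, I square the bound using \((\alpha+\beta)^2\le2\alpha^2+2\beta^2\), sum over \(\ell\), and apply \(\sqrt{A+B}\le\sqrt A+\sqrt B\). Since the factor \(\|D\Phi(z)\eta\|_{L^\infty(\Omega)}\) does not depend on \(\ell\), it comes out of the second sum, which yields \eqref{eq:forward-map-derivative-error-estimate-1}.

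No step here is a real obstacle. The only point needing care is that all hidden constants stay independent of \(z\), \(h\), \(\eta\) and \(\ell\). This holds because the trace constant depends only on \(\Omega\), and the constants in the two propositions are uniform by assumption (A1) and \(\Phi(\mathcal Z)\subset\mathcal A_{\mathrm{ad}}\).
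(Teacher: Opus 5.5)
Your proposal is correct and matches the paper's proof: apply the trace inequality componentwise, sum over \(\ell\), and invoke Propositions~\ref{prop:state-error-estimate} and~\ref{prop:first-derivative-error-estimate}. Your splitting via \((\alpha+\beta)^2\le 2\alpha^2+2\beta^2\) and \(\sqrt{A+B}\le\sqrt A+\sqrt B\) simply makes explicit the summation step that the paper leaves implicit.
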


\begin{proof}
By the boundedness of the trace operator \(H^1(\Omega)\to H^{1/2}(\partial\Omega)\),
\begin{equation}
\|u^\ell(z)|_{\partial\Omega}-u_h^\ell(z)|_{\partial\Omega}\|_{H^{1/2}(\partial\Omega)}
\lesssim
\|u^\ell(z)-u_h^\ell(z)\|_{H^1(\Omega)}
\label{eq:forward-map-error-trace-step}
\end{equation}
and similarly
\begin{equation}
\|w^\ell(z;\eta)|_{\partial\Omega}-w_h^\ell(z;\eta)|_{\partial\Omega}\|_{H^{1/2}(\partial\Omega)}
\lesssim
\|w^\ell(z;\eta)-w_h^\ell(z;\eta)\|_{H^1(\Omega)}
\label{eq:forward-map-derivative-error-trace-step}
\end{equation}
Summing over \(\ell=1,\dots,L\) and applying Propositions~\ref{prop:state-error-estimate} and~\ref{prop:first-derivative-error-estimate} gives \eqref{eq:forward-map-error-estimate} and \eqref{eq:forward-map-derivative-error-estimate-1}.
\end{proof}

\begin{cor}[Operator-norm convergence of the discrete derivative]
\label{cor:operator-norm-convergence-discrete-derivative}
Assume the hypotheses of Proposition~\ref{prop:first-derivative-error-estimate}, and let \(z^{\dagger}\in\mathcal Z\). Suppose that the family \(\{V_h^0\}_h\) satisfies the standard approximation property
\begin{equation}
\lim_{h\to 0}\inf_{v_h\in V_h^0}\|v-v_h\|_{H^1(\Omega)}=0\quad \forall v\in V
\label{eq:standard-approximation-property-discrete-spaces}
\end{equation}
Then
\begin{equation}
\|DN_L^\Phi(z^{\dagger})-DN_{L,h}^\Phi(z^{\dagger})\|_{\mathcal L(\mathbb R^m,Y_L)}\to 0\quad \text{as }h\to 0
\label{eq:operator-norm-convergence-discrete-derivative}
\end{equation}
\end{cor}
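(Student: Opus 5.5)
Because \(\mathbb R^m\) is finite-dimensional, I will reduce operator-norm convergence to convergence on the canonical basis. For any \(\eta=\sum_j\eta_j e_j\), linearity of both derivatives gives
\[
\|(DN_L^\Phi(z^\dagger)-DN_{L,h}^\Phi(z^\dagger))\eta\|_{Y_L}
\le \|\eta\|\Bigl(\sum_{j=1}^m\|(DN_L^\Phi(z^\dagger)-DN_{L,h}^\Phi(z^\dagger))e_j\|_{Y_L}^2\Bigr)^{1/2}
\]
by Cauchy--Schwarz. It therefore suffices to show that each of the finitely many quantities \(\|DN_L^\Phi(z^\dagger)e_j-DN_{L,h}^\Phi(z^\dagger)e_j\|_{Y_L}\), for \(j=1,\dots,m\), tends to zero as \(h\to0\).

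For fixed \(j\), I will apply Corollary~\ref{cor:forward-map-and-derivative-error-estimates} at \(z=z^\dagger\) with \(\eta=e_j\). This bounds the error by two terms. The first is a sum over \(\ell\) of \(\inf_{v_h\in V_h^0}\|w^\ell(z^\dagger;e_j)-v_h\|_{H^1(\Omega)}^2\). The second is \(\|D\Phi(z^\dagger)e_j\|_{L^\infty(\Omega)}\) times \(\bigl(\sum_\ell\|u^\ell(z^\dagger)-u_h^\ell(z^\dagger)\|_{H^1(\Omega)}^2\bigr)^{1/2}\).

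The sensitivities \(w^\ell(z^\dagger;e_j)\) and the states \(u^\ell(z^\dagger)\) are fixed elements of \(V\), independent of \(h\), by Propositions~\ref{prop:differentiability-of-the-boundary-map} and~\ref{prop:differentiability-of-the-latent-reduced-map}. The approximation property \eqref{eq:standard-approximation-property-discrete-spaces} therefore sends each best-approximation error to zero. For the state error, Proposition~\ref{prop:state-error-estimate} reduces it to a best-approximation error, which also vanishes in the limit. The prefactor \(\|D\Phi(z^\dagger)e_j\|_{L^\infty(\Omega)}\) is a finite constant by (A1).

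Each bound involves only the finitely many indices \(\ell\le L\) and \(j\le m\), so summing gives the claim. There is no real obstacle here. The only point to watch is that the approximation property gives pointwise, not uniform, convergence, so uniformity in \(\eta\) must come from the finite basis reduction rather than from the estimate itself. This is also why the statement is made only at the fixed point \(z^\dagger\).
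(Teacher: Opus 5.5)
Your proposal is correct and follows essentially the same route as the paper: for each canonical basis vector $e_j$, the approximation property and Corollary~\ref{cor:forward-map-and-derivative-error-estimates} at $z^\dagger$ (with Proposition~\ref{prop:state-error-estimate} handling the state term) give $\|T_h e_j\|_{Y_L}\to 0$ for $T_h=DN_L^\Phi(z^\dagger)-DN_{L,h}^\Phi(z^\dagger)$. The Cauchy--Schwarz bound $\|T_h\|\le\bigl(\sum_{j=1}^m\|T_he_j\|_{Y_L}^2\bigr)^{1/2}$ then gives operator-norm convergence.
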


\begin{proof}
Set \(T_h=DN_L^\Phi(z^\dagger)-DN_{L,h}^\Phi(z^\dagger)\).
The approximation property and
Corollary~\ref{cor:forward-map-and-derivative-error-estimates} give
\(\|T_he_j\|_{Y_L}\to0\) for each canonical basis vector \(e_j\).
By the Cauchy--Schwarz inequality,
\begin{equation}
\|T_h\|_{\mathcal L(\mathbb R^m,Y_L)}
\le\biggl(\sum_{j=1}^m\|T_he_j\|_{Y_L}^2\biggr)^{1/2}\to0
\label{eq:discrete-derivative-error-basis-bound}
\end{equation}
\end{proof}

\begin{rem}[Convergence rates from approximation properties]
\label{rem:rates-from-approximation-properties}
The best-approximation form of these estimates leaves the finite element degree and solution regularity unspecified. Combined with standard interpolation estimates, they yield convergence rates for the finite-test map and its first derivative. Corollary~\ref{cor:operator-norm-convergence-discrete-derivative} supplies the operator convergence used below to show that the discrete Gram matrix remains positive definite for sufficiently small \(h\).
\end{rem}

\subsection{Positivity of the Discrete Gram Matrix and Local Stability}
\label{subsec:positivity-of-the-discrete-gram-matrix-and-local-stability}

We now return to the reference point \(z^{\dagger}\in \mathcal{Z}\). Under Assumption (A1), suppose that the finite family of Neumann tests \(g_N^1,\dots,g_N^L\) has been chosen so that the continuous cumulative Gram matrix \(M_L\), defined in \eqref{eq:cumulative-gram-matrix-continuous}, is positive definite. The Gram identity \eqref{eq:gram-identity-continuous} then gives, with \(c_0=\sqrt{\lambda_{\min}(M_L)}>0\),
\begin{equation}
\|DN_L^\Phi(z^{\dagger})\eta\|_{Y_L}\ge c_0\|\eta\|\quad \forall \eta\in \mathbb{R}^m
\label{eq:continuous-lower-bound-reference-point}
\end{equation}
This is precisely the full-rank condition obtained in the continuous theory.

\paragraph{The Discrete Gram Matrix.}
\label{par:the-discrete-gram-matrix}
Let \(\{e_j\}_{j=1}^m\) denote the canonical basis of \(\mathbb{R}^m\). For each \(\ell=1,\dots,L\) and \(j=1,\dots,m\), we define the discrete linearized responses by
\begin{equation}
q_{j,h}^\ell=\partial_{e_j}\bigl(\mathcal{N}_{\Phi(z),h}g_N^\ell\bigr)\big|_{z=z^{\dagger}}\in H^{1/2}_{\diamond}(\partial\Omega)
\label{eq:discrete-linearized-responses}
\end{equation}
and the associated discrete cumulative Gram matrix by
\begin{equation}
[M_{L,h}]_{ij}=\sum_{\ell=1}^L (q_{i,h}^\ell,q_{j,h}^\ell)_{H^{1/2}(\partial\Omega)}
\label{eq:discrete-cumulative-gram-matrix}
\end{equation}
Then, for every \(\eta=(\eta_1,\dots,\eta_m)^{\top}\in \mathbb{R}^m\),
\begin{equation}
\eta^{\top}M_{L,h}\eta=\|DN_{L,h}^\Phi(z^{\dagger})\eta\|_{Y_L}^2
\label{eq:discrete-gram-identity}
\end{equation}
where \(N_{L,h}^\Phi\) is the discrete finite-test forward map from \eqref{eq:training-target-finite-test-map}.

\begin{prop}[Discrete linearized injectivity]
\label{prop:positivity-of-the-discrete-gram-matrix}
Assume that \eqref{eq:continuous-lower-bound-reference-point} holds and that there exists a sequence \(\varepsilon_h\to 0\) such that
\begin{equation}
\|DN_L^\Phi(z^{\dagger})-DN_{L,h}^\Phi(z^{\dagger})\|_{\mathcal L(\mathbb{R}^m,Y_L)}\le \varepsilon_h
\label{eq:derivative-perturbation-at-reference-point}
\end{equation}
Then, for all sufficiently small \(h\), the discrete derivative \(DN_{L,h}^\Phi(z^{\dagger})\) is injective and satisfies
\begin{equation}
\|DN_{L,h}^\Phi(z^{\dagger})\eta\|_{Y_L}\ge \frac{c_0}{2}\|\eta\|\quad \forall \eta\in \mathbb{R}^m
\label{eq:discrete-lower-bound-reference-point}
\end{equation}
In particular, the discrete Gram matrix \(M_{L,h}\) is positive definite for these mesh sizes.
\end{prop}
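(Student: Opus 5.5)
The argument is a perturbation bound for the lower singular value of the derivative, using the reverse triangle inequality at the single point \(z^\dagger\).

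\emph{Step 1: choose the mesh range.} Since \(\varepsilon_h\to 0\), there is \(h_0>0\) such that \(\varepsilon_h\le c_0/2\) for all \(h\le h_0\). The mesh-refinement family will be indexed in the same way as in Corollary~\ref{cor:operator-norm-convergence-discrete-derivative}, which supplies exactly such a sequence \(\varepsilon_h\) under the approximation property.

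\emph{Step 2: the lower bound.} Fix \(\eta\in\mathbb R^m\) and write
\[
DN_{L,h}^\Phi(z^\dagger)\eta=DN_L^\Phi(z^\dagger)\eta-\bigl(DN_L^\Phi(z^\dagger)-DN_{L,h}^\Phi(z^\dagger)\bigr)\eta .
\]
The reverse triangle inequality in \(Y_L\), combined with \eqref{eq:continuous-lower-bound-reference-point} and \eqref{eq:derivative-perturbation-at-reference-point}, gives
\[
\|DN_{L,h}^\Phi(z^\dagger)\eta\|_{Y_L}\ge (c_0-\varepsilon_h)\|\eta\|\ge \tfrac{c_0}{2}\|\eta\| .
\]
This is \eqref{eq:discrete-lower-bound-reference-point}. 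Injectivity follows immediately, because the right-hand side is positive for \(\eta\ne0\).

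\emph{Step 3: positive definiteness.} By the discrete Gram identity \eqref{eq:discrete-gram-identity},
\[
\eta^\top M_{L,h}\eta\ge \tfrac{c_0^2}{4}\|\eta\|^2 .
\]
Hence \(\lambda_{\min}(M_{L,h})\ge c_0^2/4\), uniformly in \(h\le h_0\).

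\emph{Expected obstacle.} The only delicate point is confirming that both derivatives are being compared in the same norm. This holds because \(Y_{L,h}\subset Y_L\) carries the inherited norm, and \eqref{eq:discrete-gram-identity} is stated in \(Y_L\); otherwise the argument is routine.
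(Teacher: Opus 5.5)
Your proposal is correct and follows the paper's proof essentially line for line: you choose \(h\) small enough that \(\varepsilon_h\le c_0/2\), apply the (reverse) triangle inequality to get the lower bound \((c_0-\varepsilon_h)\|\eta\|\ge \tfrac{c_0}{2}\|\eta\|\), and then use the discrete Gram identity to obtain \(\eta^\top M_{L,h}\eta\ge (c_0^2/4)\|\eta\|^2\). The reference in Step 1 to Corollary~\ref{cor:operator-norm-convergence-discrete-derivative} and the remark on the inherited norm are harmless extras, since the proposition already assumes the bound \eqref{eq:derivative-perturbation-at-reference-point}.
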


\begin{proof}
For sufficiently small \(h\), we have \(\varepsilon_h\le c_0/2\). The
triangle inequality then gives
\begin{equation}
\|DN_{L,h}^\Phi(z^\dagger)\eta\|_{Y_L}
\ge(c_0-\varepsilon_h)\|\eta\|
\ge\frac{c_0}{2}\|\eta\|
\label{eq:discrete-lower-bound-step-3}
\end{equation}
The Gram identity \eqref{eq:discrete-gram-identity} implies
\(\eta^\top M_{L,h}\eta\ge(c_0^2/4)\|\eta\|^2\), proving positive definiteness.
\end{proof}

\begin{rem}[Verification of the derivative perturbation bound]
\label{rem:obtaining-the-derivative-perturbation-bound}
Under the standard approximation property \eqref{eq:standard-approximation-property-discrete-spaces}, Corollary~\ref{cor:operator-norm-convergence-discrete-derivative} verifies \eqref{eq:derivative-perturbation-at-reference-point} with \(\varepsilon_h\to 0\) as \(h\to 0\).
\end{rem}

We now show that the injectivity of the discrete derivative at \(z^{\dagger}\) yields local injectivity and Lipschitz stability of the discrete finite-test map.

\begin{thm}[Local stability of the discrete finite-test map]
\label{thm:local-stability-discrete-finite-test-map}
Under Assumption (A1), suppose that the continuous cumulative Gram matrix \(M_L\) associated with the selected Neumann tests is positive definite and that the family \(\{V_h^0\}_h\) satisfies the standard approximation property \eqref{eq:standard-approximation-property-discrete-spaces}. Set \(c_0=\sqrt{\lambda_{\min}(M_L)}\). Then there exist \(h_0>0\), a convex neighborhood \(U\subset\mathcal Z\) of \(z^\dagger\), and a constant \(c_L^{\mathrm d}=4/c_0\), with \(U\) and \(c_L^{\mathrm d}\) independent of \(h\), such that, for every \(0<h\le h_0\),
\begin{equation}
\|z_1-z_2\|\le c_L^{\mathrm d}\|N_{L,h}^\Phi(z_1)-N_{L,h}^\Phi(z_2)\|_{Y_L}\quad \forall z_1,z_2\in U
\label{eq:local-stability-discrete-finite-test-map}
\end{equation}
In particular, \(N_{L,h}^\Phi\) is injective on \(U\).
\end{thm}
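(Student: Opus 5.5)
The plan is to run the argument of Theorem~\ref{thm:local-injectivity-from-finite-tests} for the discrete map $N_{L,h}^\Phi$. The stated constant $c_L^{\mathrm d}=4/c_0$ fixes the budget: a lower bound $c_0/2$ for the discrete derivative at $z^\dagger$, minus a perturbation of at most $c_0/4$ on $U$. The difficulty is that the neighborhood $U$ must not depend on $h$. So the closeness of $DN_{L,h}^\Phi(z)$ to $DN_{L,h}^\Phi(z^\dagger)$ must come from an estimate that is uniform in $h$. It should not come from continuity of each individual $DN_{L,h}^\Phi$.

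\emph{Step 1 (reference point).} By Corollary~\ref{cor:operator-norm-convergence-discrete-derivative} and Remark~\ref{rem:obtaining-the-derivative-perturbation-bound}, hypothesis \eqref{eq:derivative-perturbation-at-reference-point} holds with $\varepsilon_h\to0$. Proposition~\ref{prop:positivity-of-the-discrete-gram-matrix} then gives $h_0>0$ such that
\[
\|DN_{L,h}^\Phi(z^\dagger)\eta\|_{Y_L}\ge \tfrac{c_0}{2}\|\eta\| \quad \text{for all } \eta\in\mathbb R^m \text{ and all } 0<h\le h_0 .
\]

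\emph{Step 2 (uniform-in-$h$ continuity of the discrete derivative; main obstacle).} I will prove the discrete analogue of \eqref{eq:continuous-solution-derivative-continuity}. Take $z_1,z_2\in\mathcal Z$ with states $u_{h,i}^\ell=u_h^\ell(z_i)$ and sensitivities $w_{h,i}=w_h^\ell(z_i;\eta)$. Subtracting the discrete state equations and testing with the difference gives
\[
\|u_{h,1}^\ell-u_{h,2}^\ell\|_{H^1}\lesssim\|\Phi(z_1)-\Phi(z_2)\|_{L^\infty}\|g_N^\ell\|_{H^{-1/2}} .
\]
Subtracting the sensitivity equations \eqref{eq:discrete-forward-map-sensitivity-problem} gives, for all $v_h\in V_h^0$,
\[
A_{\Phi(z_1)}(w_{h,1}-w_{h,2},v_h)
=-\bigl((D\Phi(z_1)-D\Phi(z_2))\eta\,\nabla u_{h,1}^\ell,\nabla v_h\bigr)_\Omega
-\bigl(D\Phi(z_2)\eta\,\nabla(u_{h,1}^\ell-u_{h,2}^\ell),\nabla v_h\bigr)_\Omega
-\bigl((\Phi(z_1)-\Phi(z_2))\nabla w_{h,2},\nabla v_h\bigr)_\Omega .
\]
Testing with $v_h=w_{h,1}-w_{h,2}$ and using the uniform coercivity and the bounds \eqref{eq:discrete-uniform-stability}--\eqref{eq:discrete-first-derivative-stability} (constants independent of $z$ and $h$) then yields
\[
\|DN_{L,h}^\Phi(z_1)-DN_{L,h}^\Phi(z_2)\|_{\mathcal L(\mathbb R^m,Y_L)}
\le C\Bigl(\|D\Phi(z_1)-D\Phi(z_2)\|+\|D\Phi(z_2)\|\,\|\Phi(z_1)-\Phi(z_2)\|_{L^\infty}\Bigr),
\]
with $C$ depending only on $a_{\min}$, $a_{\max}$, $\Omega$ and the $\|g_N^\ell\|$. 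Here the operator norm of $D\Phi$ is taken from $\mathbb R^m$ into $L^\infty(\Omega)$, and the trace bound is used to pass to $Y_L$. The point to check carefully is that no constant depends on $h$; this uses only Galerkin coercivity on $V_h^0$, which is uniform.

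\emph{Step 3 (choice of $U$ and the mean-value argument).} By (A1) and continuity of $\Phi$ and $D\Phi$ at the interior point $z^\dagger$, choose a convex ball $U$ around $z^\dagger$ on which the right-hand side of Step~2, with $z_2=z^\dagger$, is at most $c_0/4$. This choice depends only on $\Phi$ and $C$, not on $h$. For $z_1,z_2\in U$ and $0<h\le h_0$, write the expansion \eqref{eq:mean-value-expansion-finite-tests} for $N_{L,h}^\Phi$; it is valid because $N_{L,h}^\Phi\in C^1$ by Proposition~\ref{prop:c1-regularity-discrete-forward-map}. Combining Steps~1 and~2 gives
\[
\|N_{L,h}^\Phi(z_1)-N_{L,h}^\Phi(z_2)\|_{Y_L}\ge \Bigl(\tfrac{c_0}{2}-\tfrac{c_0}{4}\Bigr)\|z_1-z_2\| = \tfrac{c_0}{4}\|z_1-z_2\| .
\]
This is \eqref{eq:local-stability-discrete-finite-test-map} with $c_L^{\mathrm d}=4/c_0$, and injectivity on $U$ follows immediately.
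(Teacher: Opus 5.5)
Your proposal is correct and follows the paper's proof essentially step for step. It has the same three steps: the reference-point lower bound $c_0/2$ for $0<h\le h_0$ from the operator-norm convergence corollary and the discrete Gram positivity proposition; an $h$-uniform continuity estimate for $DN_{L,h}^\Phi$ near $z^\dagger$, obtained by subtracting the discrete state and sensitivity equations and using only uniform Galerkin coercivity; and integration along the segment in a convex, $h$-independent $U$ with the $c_0/2-c_0/4$ budget. Your explicit three-term splitting of the sensitivity difference is exactly what underlies the paper's bound \eqref{eq:discrete-sensitivity-uniform-continuity}.
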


\begin{proof}
Corollary~\ref{cor:operator-norm-convergence-discrete-derivative} and
Proposition~\ref{prop:positivity-of-the-discrete-gram-matrix} give the lower
bound \eqref{eq:discrete-lower-bound-reference-point} for \(0<h\le h_0\).
To obtain a neighborhood independent of \(h\), we first establish uniform
continuity of the discrete derivative at \(z^\dagger\). For
\(z_1,z_2\in\mathcal Z\) and \(\eta\in\mathbb R^m\), write
\(a_i=\Phi(z_i)\), \(b_i=D\Phi(z_i)\eta\),
\(u_{h,i}^\ell=u_h^\ell(z_i)\), and \(w_{h,i}^\ell=w_h^\ell(z_i;\eta)\),
\(i=1,2\). Subtracting the discrete state and sensitivity equations and using
uniform coercivity and the bounds in
Proposition~\ref{prop:c1-regularity-discrete-forward-map} gives
\begin{align}
\|u_{h,1}^\ell-u_{h,2}^\ell\|_{H^1(\Omega)}
&\lesssim\|a_1-a_2\|_{L^\infty(\Omega)}
\|g_N^\ell\|_{H^{-1/2}(\partial\Omega)}
\label{eq:discrete-state-uniform-continuity}\\
\|w_{h,1}^\ell-w_{h,2}^\ell\|_{H^1(\Omega)}
&\lesssim\bigl(\|b_1-b_2\|_{L^\infty(\Omega)}
+\|a_1-a_2\|_{L^\infty(\Omega)}\|b_2\|_{L^\infty(\Omega)}\bigr)
\|g_N^\ell\|_{H^{-1/2}(\partial\Omega)}
\label{eq:discrete-sensitivity-uniform-continuity}
\end{align}
with constants independent of \(h\), \(z_1\), \(z_2\), \(\ell\), and \(\eta\).
Set \(X=\mathcal L(\mathbb R^m,L^\infty(\Omega))\). Taking traces,
summing over the fixed Neumann tests, and taking the supremum over
\(\|\eta\|=1\) yields
\begin{align}
\|DN_{L,h}^\Phi(z)-DN_{L,h}^\Phi(z^\dagger)\|_{\mathcal L(\mathbb R^m,Y_L)}
&\lesssim\|D\Phi(z)-D\Phi(z^\dagger)\|_X
\label{eq:discrete-derivative-uniform-continuity}\\
&\quad+\|\Phi(z)-\Phi(z^\dagger)\|_{L^\infty(\Omega)}
\|D\Phi(z^\dagger)\|_X
\notag
\end{align}
The implied constant is independent of \(h\). Since \(\Phi\) is \(C^1\),
the right-hand side tends to zero as \(z\to z^\dagger\). We can therefore
choose a convex neighborhood \(U\subset\mathcal Z\) of \(z^\dagger\),
independent of \(h\), on which
\begin{equation}
\|DN_{L,h}^\Phi(z)-DN_{L,h}^\Phi(z^\dagger)\|_{\mathcal L(\mathbb R^m,Y_L)}
\le\frac{c_0}{4}
\quad \forall z\in U
\label{eq:discrete-derivative-close-in-neighborhood}
\end{equation}
Integrating the derivative along the segment joining \(z_1,z_2\in U\),
as in the proof of Theorem~\ref{thm:local-injectivity-from-finite-tests}, gives
\begin{equation}
\|N_{L,h}^\Phi(z_1)-N_{L,h}^\Phi(z_2)\|_{Y_L}
\ge\left(\frac{c_0}{2}-\frac{c_0}{4}\right)\|z_1-z_2\|
\label{eq:discrete-local-stability-step-4}
\end{equation}
This proves the estimate with \(c_L^{\mathrm d}=4/c_0\) and hence injectivity.
\end{proof}

The local stability estimate for \(N_{L,h}^\Phi\) also gives an error estimate for the corresponding material coefficients. Fix a compact convex set \(K\subset U\), independent of \(h\), where \(U\) is the neighborhood from Theorem~\ref{thm:local-stability-discrete-finite-test-map}. Since \(\Phi\in C^1(\mathcal Z;L^\infty(\Omega))\), the map \(\Phi\) is Lipschitz on \(K\). We set
\begin{equation}
L_{\Phi,K}\coloneqq \sup_{z\in K}\|D\Phi(z)\|_{\mathcal L(\mathbb R^m,L^\infty(\Omega))} \label{eq:material-image-local-lipschitz-constant}
\end{equation}
Then \(L_{\Phi,K}\) controls the passage from a latent error to a material-image error.

\begin{prop}[Material-image error under residual comparison]
\label{prop:material-image-error-local-discrete-minimizers}
Assume the hypotheses of Theorem~\ref{thm:local-stability-discrete-finite-test-map}, and let \(K\subset U\) be compact and convex. Let \(y\in Y_L\), let \(z_\ast\in K\), and let \(z_{h,\ast}\in K\) satisfy the residual-comparison condition
\begin{equation}
\|N_{L,h}^\Phi(z_{h,\ast})-y\|_{Y_L}\le \|N_{L,h}^\Phi(z_\ast)-y\|_{Y_L} \label{eq:residual-minimality-discrete-latent-point}
\end{equation}
Define the corresponding material images by \(a_\ast=\Phi(z_\ast)\) and \(a_{h,\ast}=\Phi(z_{h,\ast})\). Then
\begin{equation}
\|a_{h,\ast}-a_\ast\|_{L^\infty(\Omega)}
\le 2L_{\Phi,K}c_L^{\mathrm d}\bigl(\|N_L^\Phi(z_\ast)-y\|_{Y_L}+\|N_L^\Phi(z_\ast)-N_{L,h}^\Phi(z_\ast)\|_{Y_L}\bigr) \label{eq:material-image-error-local-discrete-minimizer}
\end{equation}
In particular, if the data are generated by the continuous model at \(z_\ast\), so that \(y=N_L^\Phi(z_\ast)\), then
\begin{align}
\|a_{h,\ast}-a_\ast\|_{L^\infty(\Omega)}
&\le 2L_{\Phi,K}c_L^{\mathrm d}\|N_L^\Phi(z_\ast)-N_{L,h}^\Phi(z_\ast)\|_{Y_L} \label{eq:material-image-error-exact-data-forward-error}\\
&\lesssim L_{\Phi,K}c_L^{\mathrm d}\biggl(\sum_{\ell=1}^L\inf_{v_h\in V_h^0}\|u^\ell(z_\ast)-v_h\|_{H^1(\Omega)}^2\biggr)^{1/2} \label{eq:material-image-error-exact-data-best-approximation}
\end{align}
\end{prop}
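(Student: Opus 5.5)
The plan is to chain the discrete stability estimate of Theorem~\ref{thm:local-stability-discrete-finite-test-map} with the local Lipschitz bound for \(\Phi\) on \(K\), and then control the discrete residual at \(z_{h,\ast}\) through the residual-comparison condition. First, since \(K\) is convex and contained in \(U\), the mean value inequality along the segment joining \(z_{h,\ast}\) and \(z_\ast\) gives
\begin{equation*}
\|a_{h,\ast}-a_\ast\|_{L^\infty(\Omega)}\le L_{\Phi,K}\|z_{h,\ast}-z_\ast\|.
\end{equation*}
Here \(L_{\Phi,K}\) is finite because \(D\Phi\) is continuous and \(K\) is compact.

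Next, both points lie in \(U\) and \(0<h\le h_0\). Hence \eqref{eq:local-stability-discrete-finite-test-map} yields
\begin{equation*}
\|z_{h,\ast}-z_\ast\|\le c_L^{\mathrm d}\|N_{L,h}^\Phi(z_{h,\ast})-N_{L,h}^\Phi(z_\ast)\|_{Y_L}.
\end{equation*}
Insert \(y\) and apply the triangle inequality. Then bound the term at \(z_{h,\ast}\) using \eqref{eq:residual-minimality-discrete-latent-point}. Together these give
\begin{equation*}
\|N_{L,h}^\Phi(z_{h,\ast})-N_{L,h}^\Phi(z_\ast)\|_{Y_L}\le 2\|N_{L,h}^\Phi(z_\ast)-y\|_{Y_L}.
\end{equation*}
A second triangle inequality, passing through \(N_L^\Phi(z_\ast)\), gives
\begin{equation*}
\|N_{L,h}^\Phi(z_\ast)-y\|_{Y_L}\le\|N_L^\Phi(z_\ast)-y\|_{Y_L}+\|N_L^\Phi(z_\ast)-N_{L,h}^\Phi(z_\ast)\|_{Y_L}.
\end{equation*}
Multiplying these three bounds produces \eqref{eq:material-image-error-local-discrete-minimizer} with the constant \(2L_{\Phi,K}c_L^{\mathrm d}\). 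All norms are taken in \(Y_L\), which is valid because \(Y_{L,h}\subset Y_L\) carries the inherited norm.

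For the special case, set \(y=N_L^\Phi(z_\ast)\). The first term then vanishes, which gives \eqref{eq:material-image-error-exact-data-forward-error}. The final bound \eqref{eq:material-image-error-exact-data-best-approximation} follows directly from the forward-map estimate \eqref{eq:forward-map-error-estimate} of Corollary~\ref{cor:forward-map-and-derivative-error-estimates}, evaluated at \(z_\ast\).

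I expect no real obstacle; the main point is bookkeeping. Both points must lie in the \(h\)-independent neighborhood \(U\), which the hypothesis \(z_\ast,z_{h,\ast}\in K\subset U\) guarantees. Convexity of \(K\) is what makes the Lipschitz constant \(L_{\Phi,K}\) apply to the segment.
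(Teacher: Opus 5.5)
Your proposal is correct and follows the paper's proof essentially step for step: residual comparison plus the triangle inequality gives the factor $2$, then the $h$-independent discrete stability estimate on $U$, the Lipschitz bound for $\Phi$ on the convex compact set $K$, the split into data and finite element errors, and Corollary~\ref{cor:forward-map-and-derivative-error-estimates} for the exact-data case. Your explicit note that $0<h\le h_0$ is needed to invoke the stability estimate is useful bookkeeping that the statement leaves implicit.
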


\begin{proof}
The triangle inequality and \eqref{eq:residual-minimality-discrete-latent-point}
give
\begin{equation}
\|N_{L,h}^\Phi(z_{h,\ast})-N_{L,h}^\Phi(z_\ast)\|_{Y_L}
\le2\|N_{L,h}^\Phi(z_\ast)-y\|_{Y_L}
\label{eq:material-image-proof-minimality}
\end{equation}
Apply discrete local stability and the Lipschitz bound for \(\Phi\), then
split the right-hand side into data and finite element errors to obtain
\eqref{eq:material-image-error-local-discrete-minimizer}. For exact data,
Corollary~\ref{cor:forward-map-and-derivative-error-estimates} gives
\eqref{eq:material-image-error-exact-data-best-approximation}.
\end{proof}

The comparison condition holds for a global residual minimizer over the local
compact set \(K\). It need not hold for an arbitrary local minimizer or
an iterate of the numerical optimization method.

\begin{rem}[Regularized objectives]
\label{rem:material-image-error-regularized-objectives}
If \(J_{L,h}(z_{h,\ast})\le J_{L,h}(z_\ast)\), the same argument adds
\begin{equation}
L_{\Phi,K}c_L^{\mathrm d}
\bigl(2(\mathcal R(z_\ast)-\mathcal R(z_{h,\ast}))_+\bigr)^{1/2}
\label{eq:regularized-material-image-error-estimate}
\end{equation}
to the right-hand side of
\eqref{eq:material-image-error-local-discrete-minimizer}, where
\(s_+=\max\{s,0\}\). This follows by comparing the squared residuals in
\eqref{eq:discrete-objective-functional-finite-tests} and using
\(\sqrt{r^2+t}\le r+\sqrt t\) for \(r,t\ge0\).
\end{rem}

\subsection{Stability under Neural-Surrogate Approximation}
\label{subsec:neural-network-surrogates-and-stability-under-small-perturbations}

We now replace the discrete finite-test map \(N_{L,h}^\Phi\) by a neural-network surrogate. As described in Section~\ref{sec:problem-formulation-finite-element-method-and-neural-surrogate-approximation}, the network approximates the coordinate representation of this map directly on \(\mathcal{Z}\).
\paragraph{Finite-Dimensional Output Representation.}
\label{par:finite-dimensional-output-representation}	
We first study stability of the surrogate itself by viewing it as a \(C^1\)-perturbation of \(N_{L,h}^\Phi\). Since the discrete output space \(Y_{L,h}\) is finite-dimensional, we can work in coordinates near the reference point \(z^{\dagger}\). The coordinate isomorphism \(\Lambda_h\) from \eqref{eq:method-coordinate-isomorphism-discrete-output} induces a norm equivalence: there exist constants \(c_{\Lambda,h}>0\) and \(C_{\Lambda,h}>0\) such that
\begin{equation}
c_{\Lambda,h}\|\xi\|_{Y_L}\le \|\Lambda_h \xi\|_{\IR^{P_h}}\le C_{\Lambda,h}\|\xi\|_{Y_L}\quad \forall \xi\in Y_{L,h}
\label{eq:norm-equivalence-coordinate-output}
\end{equation}

We now show that the local injectivity and stability of \(N_{L,h}^\Phi\) persist under sufficiently small perturbations in \(C^1\). Let \(U\subset \mathcal{Z}\) be a convex neighborhood of \(z^{\dagger}\) on which the discrete finite-test map satisfies the local stability estimate from Theorem~\ref{thm:local-stability-discrete-finite-test-map}, that is,
\begin{equation}
\|z_1-z_2\|\le c_L^{\mathrm d}\|N_{L,h}^\Phi(z_1)-N_{L,h}^\Phi(z_2)\|_{Y_L}\quad \forall z_1,z_2\in U
\label{eq:discrete-local-stability-on-neighborhood}
\end{equation}
Recall the definition of the neural-network surrogate \(N_{L,h,M}^\Phi\) given in \eqref{eq:two-layer-network-discrete-forward-map} and \eqref{eq:network-surrogate-discrete-forward-map}.
\begin{thm}[\boldmath Stability under small \(C^1\) perturbations]
\label{thm:stability-under-small-c1-perturbations}
Let \( N_{L,h,M}^\Phi \in C^1(U;Y_{L,h})\) satisfy
\begin{equation}
\sup_{z\in U}\|D(N_{L,h}^\Phi-N_{L,h,M}^\Phi)(z)\|_{\mathcal L(\mathbb{R}^m,Y_L)}\le \frac{1}{2c_L^{\mathrm d}}
\label{eq:small-c1-perturbation-assumption}
\end{equation}
Then
\begin{equation}
\|z_1-z_2\|\le 2c_L^{\mathrm d}\|N_{L,h,M}^\Phi(z_1)-N_{L,h,M}^\Phi (z_2)\|_{Y_L}\quad \forall z_1,z_2\in U
\label{eq:local-stability-network-surrogate}
\end{equation}
In particular, \(N_{L,h,M}^\Phi\) is injective on \(U\).
\end{thm}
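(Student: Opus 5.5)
The plan is to write the surrogate as a perturbation of the discrete map and use convexity of \(U\) to integrate the perturbation's derivative along segments. Set \(E=N_{L,h,M}^\Phi-N_{L,h}^\Phi\). This is a \(C^1\) map from \(U\) into the finite-dimensional space \(Y_{L,h}\subset Y_L\). Proposition~\ref{prop:c1-regularity-discrete-forward-map} gives \(N_{L,h}^\Phi\in C^1\), and \(N_{L,h,M}^\Phi\) is \(C^1\) by hypothesis. For \(z_1,z_2\in U\), convexity puts the segment \(z(t)=z_2+t(z_1-z_2)\) inside \(U\). The fundamental theorem of calculus, applied to the Banach-space-valued map \(t\mapsto E(z(t))\), then gives
\begin{equation*}
E(z_1)-E(z_2)=\int_0^1 DE(z(t))(z_1-z_2)\,dt ,
\end{equation*}
and hence, by \eqref{eq:small-c1-perturbation-assumption},
\begin{equation*}
\|E(z_1)-E(z_2)\|_{Y_L}\le \frac{1}{2c_L^{\mathrm d}}\|z_1-z_2\|.
\end{equation*}

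Next, write \(N_{L,h}^\Phi(z_1)-N_{L,h}^\Phi(z_2)=\bigl(N_{L,h,M}^\Phi(z_1)-N_{L,h,M}^\Phi(z_2)\bigr)-\bigl(E(z_1)-E(z_2)\bigr)\) and apply the discrete stability \eqref{eq:discrete-local-stability-on-neighborhood} together with the triangle inequality:
\begin{equation*}
\|z_1-z_2\|\le c_L^{\mathrm d}\|N_{L,h,M}^\Phi(z_1)-N_{L,h,M}^\Phi(z_2)\|_{Y_L}+\tfrac12\|z_1-z_2\|.
\end{equation*}
Absorbing the last term into the left-hand side yields \eqref{eq:local-stability-network-surrogate} with constant \(2c_L^{\mathrm d}\). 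Injectivity on \(U\) follows at once, since equal surrogate outputs force \(\|z_1-z_2\|=0\).

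There is no serious obstacle here; the result is a standard absorption argument. The only points to check are that \(U\) is convex, which it is by construction in Theorem~\ref{thm:local-stability-discrete-finite-test-map}, and that the derivative bound is measured in the \(Y_L\) operator norm. Both hold as stated, so the coordinate isomorphism \(\Lambda_h\) and its \(h\)-dependent norm-equivalence constants never enter. Those constants would matter only if the hypothesis were instead stated for the coordinate network \(\widehat N_{L,h,M}^\Phi\) in \(\mathbb R^{P_h}\). In that case one would use \eqref{eq:norm-equivalence-coordinate-output} to convert the bound, at the cost of a factor \(1/c_{\Lambda,h}\).
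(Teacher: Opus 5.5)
Your proof is correct and takes essentially the same route as the paper. You integrate the derivative of the difference map along segments in the convex set \(U\) to obtain the Lipschitz bound \((2c_L^{\mathrm d})^{-1}\), and then combine it with the discrete stability estimate via the triangle inequality; your absorption step is just a rearrangement of the paper's lower bound \(\|N_{L,h,M}^\Phi(z_1)-N_{L,h,M}^\Phi(z_2)\|_{Y_L}\ge (2c_L^{\mathrm d})^{-1}\|z_1-z_2\|\).
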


\begin{proof}
Set \(E_h=N_{L,h}^\Phi-N_{L,h,M}^\Phi\). Convexity of \(U\) and
\eqref{eq:small-c1-perturbation-assumption} give
\(\|E_h(z_1)-E_h(z_2)\|_{Y_L}\le(2c_L^{\mathrm d})^{-1}\|z_1-z_2\|\)
by integration along the line segment. Thus
\eqref{eq:discrete-local-stability-on-neighborhood} and the triangle inequality yield
\begin{align}
\|N_{L,h,M}^\Phi(z_1)-N_{L,h,M}^\Phi(z_2)\|_{Y_L}
&\ge \|N_{L,h}^\Phi(z_1)-N_{L,h}^\Phi(z_2)\|_{Y_L}
-\|E_h(z_1)-E_h(z_2)\|_{Y_L}
\label{eq:network-surrogate-stability-step-1}\\
&\ge\frac{1}{2c_L^{\mathrm d}}\|z_1-z_2\|
\label{eq:network-surrogate-stability-step-3}
\end{align}
This proves the estimate and injectivity.
\end{proof}

\paragraph{\boldmath \(C^1\)-Approximation Hypothesis.}
\label{par:c1-approximation-hypothesis}
To connect the discrete theory with neural-network approximation, we formulate the approximation step in terms of the coordinate map \(\widehat{N}_{L,h}^\Phi\). For each fixed \(h\), let \(K\subset U\) be a compact convex set containing \(z^{\dagger}\). Assume that there exists a sequence of two-layer networks \(\widehat{N}_{L,h,M}^\Phi \) such that
\begin{equation}
\|\widehat{N}_{L,h}^\Phi-\widehat{N}_{L,h,M}^\Phi\|_{C^1(K;\mathbb{R}^{P_h})}\to 0\quad \text{as }M\to \infty
\label{eq:c1-approximation-coordinate-map}
\end{equation}
Then the associated surrogates \(N_{L,h,M}^\Phi\) defined by \eqref{eq:network-surrogate-discrete-forward-map} satisfy
\begin{equation}
\|N_{L,h}^\Phi-N_{L,h,M}^\Phi\|_{C^1(K;Y_L)}\to 0\quad \text{as }M\to \infty
\label{eq:c1-approximation-forward-map}
\end{equation}
in the sense induced by the norm equivalence \eqref{eq:norm-equivalence-coordinate-output}.

\begin{prop}[\boldmath Stability under \(C^1\)-convergent neural approximation]
\label{prop:c1-approximation-implies-stable-surrogate}
For a fixed \(h\), assume that \eqref{eq:c1-approximation-coordinate-map} holds. Then, for all sufficiently large widths \(M\),
\begin{equation}
\sup_{z\in K}\|D(N_{L,h}^\Phi-N_{L,h,M}^\Phi)(z)\|_{\mathcal L(\mathbb{R}^m,Y_L)}\le \frac{1}{2c_L^{\mathrm d}}
\label{eq:large-width-derivative-threshold}
\end{equation}
and therefore the neural-network surrogate \(N_{L,h,M}^\Phi\) is injective on \(K\) and satisfies
\begin{equation}
\|z_1-z_2\|\le 2c_L^{\mathrm d}\|N_{L,h,M}^\Phi(z_1)-N_{L,h,M}^\Phi(z_2)\|_{Y_L}\quad \forall z_1,z_2\in K
\label{eq:stable-surrogate-from-c1-approximation}
\end{equation}
\end{prop}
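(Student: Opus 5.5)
The plan is to reduce the statement to Theorem~\ref{thm:stability-under-small-c1-perturbations}, applied on the compact convex set \(K\) in place of \(U\). There are three steps: convert the coordinate-space \(C^1\) convergence \eqref{eq:c1-approximation-coordinate-map} into a derivative bound in the \(Y_L\) operator norm, pick \(M\) large enough that this bound falls below \(1/(2c_L^{\mathrm d})\), and then repeat the segment-integration argument on \(K\).

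For the first step, write \(E_{h,M}=N_{L,h}^\Phi-N_{L,h,M}^\Phi=\Lambda_h^{-1}\circ(\widehat N_{L,h}^\Phi-\widehat N_{L,h,M}^\Phi)\). Since \(\Lambda_h^{-1}\) is linear, we have \(DE_{h,M}(z)=\Lambda_h^{-1}D(\widehat N_{L,h}^\Phi-\widehat N_{L,h,M}^\Phi)(z)\). The lower bound in \eqref{eq:norm-equivalence-coordinate-output} gives \(\|\Lambda_h^{-1}\zeta\|_{Y_L}\le c_{\Lambda,h}^{-1}\|\zeta\|_{\mathbb R^{P_h}}\). Hence
\[
\sup_{z\in K}\|DE_{h,M}(z)\|_{\mathcal L(\mathbb R^m,Y_L)}\le c_{\Lambda,h}^{-1}\,\|\widehat N_{L,h}^\Phi-\widehat N_{L,h,M}^\Phi\|_{C^1(K;\mathbb R^{P_h})},
\]
which is exactly \eqref{eq:c1-approximation-forward-map}. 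The constant \(c_{\Lambda,h}\) may depend on \(h\), but \(h\) is fixed here, so it is harmless. By \eqref{eq:c1-approximation-coordinate-map}, the right-hand side tends to zero as \(M\to\infty\). Therefore there is \(M_0\) such that \eqref{eq:large-width-derivative-threshold} holds for all \(M\ge M_0\).

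For the last step, note that \(K\subset U\) is convex, so the segment between any \(z_1,z_2\in K\) stays in \(K\). Integrating \(DE_{h,M}\) along that segment gives \(\|E_{h,M}(z_1)-E_{h,M}(z_2)\|_{Y_L}\le(2c_L^{\mathrm d})^{-1}\|z_1-z_2\|\). The discrete estimate \eqref{eq:discrete-local-stability-on-neighborhood} holds on \(U\supset K\). Combining the two with the reverse triangle inequality, as in \eqref{eq:network-surrogate-stability-step-1}--\eqref{eq:network-surrogate-stability-step-3}, yields
\[
\|N_{L,h,M}^\Phi(z_1)-N_{L,h,M}^\Phi(z_2)\|_{Y_L}\ge\bigl(1/c_L^{\mathrm d}-1/(2c_L^{\mathrm d})\bigr)\|z_1-z_2\|,
\]
which is \eqref{eq:stable-surrogate-from-c1-approximation}. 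Injectivity on \(K\) follows immediately.

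The only delicate point is regularity. Theorem~\ref{thm:stability-under-small-c1-perturbations} is stated on \(U\), while the hypothesis here only controls the derivative on \(K\). I therefore rerun its proof on \(K\) rather than quote it, and the convexity of \(K\) is what keeps the integration paths inside the region where the derivative bound holds. This also needs \(\widehat N_{L,h,M}^\Phi\) to be \(C^1\) near \(K\). That holds whenever \(\sigma\in C^1\), which is implicit in \eqref{eq:c1-approximation-coordinate-map} being meaningful. The map \(N_{L,h}^\Phi\) is \(C^1\) by Proposition~\ref{prop:c1-regularity-discrete-forward-map}.
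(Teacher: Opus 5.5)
Your proposal is correct and matches the paper's proof. Both use the norm equivalence for $\Lambda_h$ to turn the coordinate-space $C^1$ convergence into the $\mathcal L(\mathbb R^m,Y_L)$ derivative bound, and then apply the perturbation argument of Theorem~\ref{thm:stability-under-small-c1-perturbations}. The paper simply cites that theorem, even though its hypothesis is stated on $U$ rather than $K$. By rerunning the segment argument on the convex set $K$, you make explicit the step the paper leaves implicit.
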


\begin{proof}
The convergence \eqref{eq:c1-approximation-coordinate-map} and the norm equivalence \eqref{eq:norm-equivalence-coordinate-output} imply \eqref{eq:c1-approximation-forward-map}. In particular, \eqref{eq:large-width-derivative-threshold} holds for all sufficiently large \(M\). The claim then follows from Theorem~\ref{thm:stability-under-small-c1-perturbations}.
\end{proof}

The hypothesis \eqref{eq:c1-approximation-coordinate-map} is not established
here for the trained networks. Barron-space estimates could provide one route
to its verification for suitable coordinate maps and activations, but
\(C^1\)-regularity alone does not imply such estimates.

\begin{cor}[Material-image error under surrogate residual comparison]
\label{cor:material-image-error-surrogate-residual-minimizers}
Assume the hypotheses of
Theorem~\ref{thm:local-stability-discrete-finite-test-map}, fix
\(0<h\le h_0\), and let \(K\subset U\) be compact and convex.
Let \(N_{L,h,M}^\Phi:K\to Y_{L,h}\) be a continuous surrogate,
let \(y\in Y_L\), and suppose \(z_\ast,z_{h,M,\ast}\in K\) satisfy
\begin{equation}
\|N_{L,h,M}^\Phi(z_{h,M,\ast})-y\|_{Y_L}
\le\|N_{L,h,M}^\Phi(z_\ast)-y\|_{Y_L}
\label{eq:residual-minimality-surrogate-latent-point}
\end{equation}
Set
\begin{equation}
\delta_{h,M}=\sup_{z\in K}
\|N_{L,h}^\Phi(z)-N_{L,h,M}^\Phi(z)\|_{Y_L}
\label{eq:uniform-surrogate-forward-error}
\end{equation}
Then
\begin{equation}
\|\Phi(z_{h,M,\ast})-\Phi(z_\ast)\|_{L^\infty(\Omega)}
\le2L_{\Phi,K}c_L^{\mathrm d}
\bigl(\|N_{L,h}^\Phi(z_\ast)-y\|_{Y_L}+\delta_{h,M}\bigr)
\label{eq:material-image-error-surrogate-residual-minimizer}
\end{equation}
For data \(y=N_L^\Phi(z_\ast)+e\) with \(\|e\|_{Y_L}\le\varepsilon\),
this gives
\begin{equation}
\|\Phi(z_{h,M,\ast})-\Phi(z_\ast)\|_{L^\infty(\Omega)}
\le2L_{\Phi,K}c_L^{\mathrm d}\bigl(\varepsilon
+\|N_L^\Phi(z_\ast)-N_{L,h}^\Phi(z_\ast)\|_{Y_L}
+\delta_{h,M}\bigr)
\label{eq:material-image-error-surrogate-noisy-data}
\end{equation}
In particular, for exact continuous data,
\begin{equation}
\|\Phi(z_{h,M,\ast})-\Phi(z_\ast)\|_{L^\infty(\Omega)}
\le2L_{\Phi,K}c_L^{\mathrm d}
\bigl(\|N_L^\Phi(z_\ast)-N_{L,h}^\Phi(z_\ast)\|_{Y_L}+\delta_{h,M}\bigr)
\label{eq:material-image-error-surrogate-continuous-data}
\end{equation}
\end{cor}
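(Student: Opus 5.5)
The plan is to compare the two latent points through the \emph{discrete} map \(N_{L,h}^\Phi\), for which Theorem~\ref{thm:local-stability-discrete-finite-test-map} gives the \(h\)-independent stability constant \(c_L^{\mathrm d}\) on \(U\supset K\). The surrogate only enters through the residual comparison \eqref{eq:residual-minimality-surrogate-latent-point} and the uniform value error \(\delta_{h,M}\). No derivative accuracy of the surrogate is needed, which matches the hypothesis that \(N_{L,h,M}^\Phi\) is merely continuous.

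Write \(r_\ast=\|N_{L,h}^\Phi(z_\ast)-y\|_{Y_L}\). The key chain bounds the discrete residual at \(z_{h,M,\ast}\) while paying \(\delta_{h,M}\) only twice in total. First, since both points lie in \(K\),
\[
\|N_{L,h}^\Phi(z_{h,M,\ast})-y\|_{Y_L}\le\|N_{L,h,M}^\Phi(z_{h,M,\ast})-y\|_{Y_L}+\delta_{h,M}.
\]
Next, by \eqref{eq:residual-minimality-surrogate-latent-point},
\[
\|N_{L,h,M}^\Phi(z_{h,M,\ast})-y\|_{Y_L}\le\|N_{L,h,M}^\Phi(z_\ast)-y\|_{Y_L}\le r_\ast+\delta_{h,M}.
\]
Combining the two gives \(\|N_{L,h}^\Phi(z_{h,M,\ast})-y\|_{Y_L}\le r_\ast+2\delta_{h,M}\). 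The triangle inequality through \(y\) then yields
\[
\|N_{L,h}^\Phi(z_{h,M,\ast})-N_{L,h}^\Phi(z_\ast)\|_{Y_L}\le 2r_\ast+2\delta_{h,M}=2(r_\ast+\delta_{h,M}).
\]
The point to be careful about is exactly this accounting. One should not first bound \(\|N_{L,h,M}^\Phi(z_{h,M,\ast})-N_{L,h,M}^\Phi(z_\ast)\|\) and then transfer to \(N_{L,h}^\Phi\) at both points. That route produces \(4\delta_{h,M}\). Going through \(y\) with the discrete map from the start is what gives the stated constant.

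Apply \eqref{eq:local-stability-discrete-finite-test-map}, valid since \(0<h\le h_0\) and \(K\subset U\), to get \(\|z_{h,M,\ast}-z_\ast\|\le 2c_L^{\mathrm d}(r_\ast+\delta_{h,M})\). Then use the Lipschitz bound for \(\Phi\) on the convex set \(K\). Integrating \(D\Phi\) along the segment, which stays in \(K\), gives \(\|\Phi(z_1)-\Phi(z_2)\|_{L^\infty(\Omega)}\le L_{\Phi,K}\|z_1-z_2\|\) with \(L_{\Phi,K}\) from \eqref{eq:material-image-local-lipschitz-constant}. This proves \eqref{eq:material-image-error-surrogate-residual-minimizer}.

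For noisy data \(y=N_L^\Phi(z_\ast)+e\), split
\[
r_\ast\le\|N_{L,h}^\Phi(z_\ast)-N_L^\Phi(z_\ast)\|_{Y_L}+\|e\|_{Y_L}
\]
and use \(\|e\|_{Y_L}\le\varepsilon\) to obtain \eqref{eq:material-image-error-surrogate-noisy-data}. Setting \(\varepsilon=0\) gives \eqref{eq:material-image-error-surrogate-continuous-data}. Apart from keeping the \(\delta_{h,M}\) count at two, every step is routine.
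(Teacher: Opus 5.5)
Your proof is correct and follows essentially the same route as the paper: the paper also goes through \(N_{L,h,M}^\Phi(z_{h,M,\ast})\) and \(y\) to obtain \(\|N_{L,h}^\Phi(z_{h,M,\ast})-N_{L,h}^\Phi(z_\ast)\|_{Y_L}\le 2\delta_{h,M}+2\|N_{L,h}^\Phi(z_\ast)-y\|_{Y_L}\). It then applies the discrete local stability estimate and the Lipschitz bound for \(\Phi\) on \(K\), and obtains the noisy and exact-data versions from the same triangle-inequality split you use.
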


\begin{proof}
Write \(F=N_{L,h}^\Phi\), \(S=N_{L,h,M}^\Phi\), and
\(\widehat z=z_{h,M,\ast}\). The uniform error and residual comparison give
\begin{align}
\|F(\widehat z)-F(z_\ast)\|_{Y_L}
&\le\delta_{h,M}+\|S(\widehat z)-y\|_{Y_L}+\|y-F(z_\ast)\|_{Y_L}
\label{eq:surrogate-material-error-proof-triangle}\\
&\le2\delta_{h,M}+2\|F(z_\ast)-y\|_{Y_L}
\label{eq:surrogate-material-error-proof-forward-split}
\end{align}
Apply the local stability of \(F\) and the Lipschitz continuity of \(\Phi\)
on \(K\). The remaining estimates follow from the triangle inequality.
\end{proof}

This corollary requires only uniform value accuracy of the surrogate. No
derivative accuracy or surrogate injectivity is needed. Since the surrogate is
continuous, its residual attains a global minimum on \(K\). Every such minimizer
satisfies \eqref{eq:residual-minimality-surrogate-latent-point}. The result does
not assert convergence of an optimization algorithm or that its iterates remain in
\(K\). Training and validation mean squared errors do not by themselves
certify the uniform bound \(\delta_{h,M}\).

\begin{rem}[Representation mismatch]
\label{rem:representation-mismatch-error}
Retain the hypotheses and notation of
Corollary~\ref{cor:material-image-error-surrogate-residual-minimizers}, with
\(a^\dagger=\Phi(z^\dagger)\) as the reference coefficient for local
stability. Allow the true coefficient to be an arbitrary
\(\widetilde a\in\mathcal A_{\mathrm{ad}}\), with data
\(y=N_L(\widetilde a)+e\), \(\|e\|_{Y_L}\le\varepsilon\). For
\(z_\ast,z_{h,M,\ast}\in K\) satisfying
\eqref{eq:residual-minimality-surrogate-latent-point}, set
\begin{equation}
e_{\mathrm{rep}}=\|\widetilde a-\Phi(z_\ast)\|_{L^\infty(\Omega)},
\qquad
E_h=\|N_L^\Phi(z_\ast)-N_{L,h}^\Phi(z_\ast)\|_{Y_L}
\label{eq:representation-and-discretization-errors}
\end{equation}
Let \(C_F\) be a Lipschitz constant of
\(N_L:\mathcal A_{\mathrm{ad}}\to Y_L\), obtained by the state subtraction
in \eqref{eq:continuous-solution-perturbation-estimate} and the trace bound
for the fixed tests. Then
\begin{equation}
\|\Phi(z_{h,M,\ast})-\widetilde a\|_{L^\infty(\Omega)}
\le\bigl(1+2L_{\Phi,K}c_L^{\mathrm d}C_F\bigr)e_{\mathrm{rep}}
+2L_{\Phi,K}c_L^{\mathrm d}
\bigl(\varepsilon+E_h+\delta_{h,M}\bigr)
\label{eq:material-image-error-with-representation-mismatch}
\end{equation}
Indeed,
\(\|N_{L,h}^\Phi(z_\ast)-y\|_{Y_L}\le E_h+C_Fe_{\mathrm{rep}}+\varepsilon\).
Apply \eqref{eq:material-image-error-surrogate-residual-minimizer} and add
\(e_{\mathrm{rep}}\) by the triangle inequality. The estimate remains local:
both latent points must lie in \(K\), and residual comparison is still required.
\end{rem}

\subsection[Stability in the Boundary L2-Norm]{Stability in the Boundary \(L^2\)-Norm}
\label{subsec:l2-observation-stability}

The boundary misfit used in the numerical experiments can be treated directly
in the observation space
\begin{equation}
Y_L^{(0)}=\bigl(L^2(\partial\Omega)\bigr)^L
\label{eq:l2-observation-space}
\end{equation}
This space also includes residuals with a nonzero boundary mean.
Using the continuous responses from \eqref{eq:linearized-responses}, define
\begin{equation}
[M_L^{(0)}]_{ij}
=\sum_{\ell=1}^L(q_i(g_N^\ell),q_j(g_N^\ell))_{L^2(\partial\Omega)},
\quad i,j=1,\dots,m
\label{eq:l2-continuous-gram-matrix}
\end{equation}

\begin{cor}[Local stability with \(L^2\) boundary observations]
\label{cor:l2-observation-stability}
Under the hypotheses of
Theorem~\ref{thm:local-stability-discrete-finite-test-map}, the matrix
\(M_L^{(0)}\) is positive definite. Set
\(c_0^{(0)}=\sqrt{\lambda_{\min}(M_L^{(0)})}\).
The continuous local stability estimate holds in \(Y_L^{(0)}\) on a
sufficiently small neighborhood of \(z^\dagger\). Moreover, there exist
\(h_0^{(0)}>0\) and a convex neighborhood \(U^{(0)}\) of \(z^\dagger\),
independent of \(h\), such that, for every \(0<h\le h_0^{(0)}\) and
\(z_1,z_2\in U^{(0)}\),
\begin{equation}
\|z_1-z_2\|
\le c_L^{\mathrm d,(0)}
\|N_{L,h}^\Phi(z_1)-N_{L,h}^\Phi(z_2)\|_{Y_L^{(0)}},
\quad c_L^{\mathrm d,(0)}=\frac{4}{c_0^{(0)}}
\label{eq:l2-discrete-local-stability}
\end{equation}
The constant is independent of \(h\). The material-image estimates of
Proposition~\ref{prop:material-image-error-local-discrete-minimizers},
Corollary~\ref{cor:material-image-error-surrogate-residual-minimizers}, and
Remark~\ref{rem:representation-mismatch-error} hold
with \(Y_L\), \(c_L^{\mathrm d}\), and \(K\subset U\) replaced by
\(Y_L^{(0)}\), \(c_L^{\mathrm d,(0)}\), and a compact convex subset of
\(U^{(0)}\), respectively. A \(C^1\) surrogate itself inherits local stability if
its derivative error in \(\mathcal L(\mathbb R^m,Y_L^{(0)})\) is at most
\(1/(2c_L^{\mathrm d,(0)})\) on that set.
For the mismatch estimate, \(C_F\) is taken in the \(Y_L^{(0)}\)-norm.
\end{cor}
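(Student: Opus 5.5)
The proof reruns the \(H^{1/2}\) arguments with the weaker \(L^2(\partial\Omega)\) norm. The only genuinely new point is positive definiteness of \(M_L^{(0)}\).

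\emph{Step 1: positive definiteness of \(M_L^{(0)}\).} Exactly as in \eqref{eq:gram-identity-continuous}, we have
\[
\eta^\top M_L^{(0)}\eta=\|DN_L^\Phi(z^\dagger)\eta\|_{Y_L^{(0)}}^2=\sum_{\ell=1}^L\Bigl\|\sum_{j=1}^m\eta_j q_j(g_N^\ell)\Bigr\|_{L^2(\partial\Omega)}^2 .
\]
If this vanishes, then each trace \(\sum_j\eta_jq_j(g_N^\ell)\in H^{1/2}(\partial\Omega)\) vanishes in \(L^2\), hence as an element of \(H^{1/2}\). The \(H^{1/2}\) Gram form \(\eta^\top M_L\eta\) is therefore zero, and positive definiteness of \(M_L\) gives \(\eta=0\). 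Thus \(M_L^{(0)}\) is positive definite, and \(c_0^{(0)}>0\) is well defined.

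\emph{Step 2: continuous and discrete local stability.} For the continuous estimate, repeat the proof of Theorem~\ref{thm:local-injectivity-from-finite-tests} with \(Y_L\) replaced by \(Y_L^{(0)}\). Continuity of \(DN_L^\Phi\) into \(\mathcal L(\mathbb R^m,Y_L^{(0)})\) follows from continuity into \(Y_L\), because of the bounded embedding \(\|\cdot\|_{L^2(\partial\Omega)}\le C\|\cdot\|_{H^{1/2}(\partial\Omega)}\).

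For the discrete estimate, note that Corollary~\ref{cor:operator-norm-convergence-discrete-derivative} and the \(h\)-uniform continuity bound \eqref{eq:discrete-derivative-uniform-continuity} both hold in \(Y_L^{(0)}\), with the constant multiplied by \(C\), since every quantity is dominated by its \(Y_L\)-norm. Proposition~\ref{prop:positivity-of-the-discrete-gram-matrix} then goes through with \(c_0^{(0)}\) in place of \(c_0\). This gives the lower bound \(c_0^{(0)}/2\) for \(0<h\le h_0^{(0)}\). Next we choose a convex neighborhood \(U^{(0)}\), independent of \(h\), on which the derivative deviation is at most \(c_0^{(0)}/4\). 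The segment integration of Theorem~\ref{thm:local-stability-discrete-finite-test-map} then yields \eqref{eq:l2-discrete-local-stability} with \(c_L^{\mathrm d,(0)}=4/c_0^{(0)}\).

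\emph{Step 3: material-image and surrogate statements.} These are transcriptions of the earlier results. The proofs of Proposition~\ref{prop:material-image-error-local-discrete-minimizers}, Corollary~\ref{cor:material-image-error-surrogate-residual-minimizers}, Remark~\ref{rem:representation-mismatch-error} and Theorem~\ref{thm:stability-under-small-c1-perturbations} use only three ingredients: the triangle inequality in the observation norm, the local stability estimate, and the Lipschitz bound \(L_{\Phi,K}\). We substitute \eqref{eq:l2-discrete-local-stability} and the \(Y_L^{(0)}\)-norm throughout. The Lipschitz constant \(C_F\) of \(N_L\) into \(Y_L^{(0)}\) exists by the same embedding.

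\emph{Main obstacle.} The expected difficulty is Step 1, namely ensuring that the weaker norm does not destroy definiteness. The key observation is that \(M_L^{(0)}\) is a Gram matrix of finitely many vectors, so its definiteness depends only on their linear independence, which is norm-independent. Beyond that, the care needed is to check that every constant in Step 2 stays \(h\)-independent after the embedding, and this holds because the embedding constant \(C\) is fixed.
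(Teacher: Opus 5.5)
Your proposal is correct and follows the paper's proof essentially step for step. You obtain positive definiteness of \(M_L^{(0)}\) from the fact that an \(H^{1/2}(\partial\Omega)\) trace vanishing in \(L^2(\partial\Omega)\) is zero, transfer the derivative convergence and the \(h\)-uniform continuity bound \eqref{eq:discrete-derivative-uniform-continuity} to \(Y_L^{(0)}\) through the bounded inclusion \(Y_L\hookrightarrow Y_L^{(0)}\), and rerun the segment, residual-comparison and surrogate arguments in the new norm. Your Step 3 leaves one point implicit: the best-approximation bound in Proposition~\ref{prop:material-image-error-local-discrete-minimizers} also carries over, because the \(Y_L^{(0)}\) finite element error is dominated by the \(Y_L\) error through the same inclusion.
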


\begin{proof}
If \(\eta^\top M_L^{(0)}\eta=0\), then
\(\sum_{j=1}^m\eta_jq_j(g_N^\ell)=0\) in \(L^2(\partial\Omega)\) for every
\(\ell\). These traces belong to \(H^{1/2}(\partial\Omega)\), so they also
vanish in that space. Positive definiteness of \(M_L\) gives \(\eta=0\).
The corresponding Gram identity therefore yields a linearized lower bound
with constant \(c_0^{(0)}>0\). The continuous local stability proof applies
in \(Y_L^{(0)}\), since the inclusion \(Y_L\hookrightarrow Y_L^{(0)}\) is
bounded. This inclusion also transfers the derivative convergence in
Corollary~\ref{cor:operator-norm-convergence-discrete-derivative} and the
uniform continuity estimate \eqref{eq:discrete-derivative-uniform-continuity}
to \(Y_L^{(0)}\). Repeating the proof of
Theorem~\ref{thm:local-stability-discrete-finite-test-map} therefore gives
\eqref{eq:l2-discrete-local-stability} on a neighborhood independent of \(h\).
The residual-comparison and surrogate
arguments then apply in the same norm.
\end{proof}

The stability constant and neighborhood are independent of \(h\) for
sufficiently fine meshes, but depend on the reference point, the latent map,
and the selected tests. The argument uses injectivity on the
finite-dimensional tangent space; it does not require an inverse estimate
relating the two norms on the full discrete trace space.

\paragraph{Scope of the Discrete and Surrogate Theory.}
\label{par:scope-of-the-discrete-and-surrogate-theory}
The finite element conclusions require the standard approximation property and hold for sufficiently small \(h\). Both the stability constant and neighborhood can be chosen independently of \(h\), but depend on the reference point, the latent map, and the selected Neumann tests. The best-approximation estimates yield explicit rates only when combined with additional solution regularity and interpolation estimates. For each fixed \(h\), stability of the surrogate itself is conditional on the coordinate-space \(C^1\)-approximation property \eqref{eq:c1-approximation-coordinate-map}; the analysis does not establish this property for a particular network architecture or training procedure, nor does it provide a mesh-uniform width bound or a joint relation between \(M\) and \(h\). The stability conclusions are local about \(z^\dagger\) within the latent model. Remark~\ref{rem:representation-mismatch-error} allows a true coefficient outside that model by including its discrepancy from a comparison coefficient in the same local stability set.

%% file: sec-material-rev0.tex
\section{Material Representation Maps}
\label{sec:material-representations}

We now describe three choices of representation map for the material coefficient. We use the latent variable \(z\in\mathcal{Z}\subset\mathbb{R}^m\) and representation map \(\Phi:\mathcal{Z}\to\mathcal{A}_{\mathrm{ad}}\) introduced in \eqref{eq:representation-map}, with \(a=\Phi(z)\) as in \eqref{eq:coefficient-from-representation-map}. The resulting continuous, discrete, and surrogate finite-test maps are \(N_L^\Phi\), \(N_{L,h}^\Phi\), and \(N_{L,h,M}^\Phi\), respectively. The representation-dependent hypotheses in the local stability results of Sections~\ref{sec:continuous-theory-for-latent-inversion-and-compressed-observations} and~\ref{sec:discrete-theory-for-the-latent-forward-map} are that \(\Phi\) satisfies Assumptions (A1) and (A2) and that the selected Neumann tests yield a positive-definite cumulative Gram matrix. Thus, admissibility, \(C^1\)-regularity, and full rank at the reference point must be verified for each particular representation.

\paragraph{Analytical Parametrization.}
\label{par:analytical-parametrization}
An analytical parametrization describes the material by a small number of explicit, interpretable parameters \(\mu\in\mathcal{Z}\subset\mathbb{R}^m\). We identify the latent variable with these parameters, so that \(z=\mu\), and write the representation map as \(\Phi(\mu)\).

Typical examples include piecewise constant coefficients, affine expansions, and constitutive laws involving a few scalar parameters. For smooth parametrizations, admissibility, regularity, and full rank can often be checked explicitly. Parametrizations that move sharp interfaces, however, are generally not \(C^1\) as maps into \(L^\infty(\Omega)\). In particular, the binary analytical ellipse model in Section~\ref{sec:numerical-experiments} is used as a numerical illustration outside Assumption (A1), as discussed there.

\paragraph{Linear Representation.}
\label{par:linear-latent-representation}
As an intermediate model between analytical parametrizations and nonlinear learned descriptions, consider
\begin{equation}
\Phi(z)=\bar a+\sum_{j=1}^m z_j\psi_j
\qquad
z\in\mathcal{Z}\subset\mathbb{R}^m
\label{eq:linear-latent-representation}
\end{equation}
where \(\bar a,\psi_1,\ldots,\psi_m\in L^\infty(\Omega)\). The latent set \(\mathcal{Z}\) must be chosen so that \(\Phi(\mathcal{Z})\subset\mathcal{A}_{\mathrm{ad}}\). This model includes reduced-basis, dictionary, and principal-component-type representations. Its \(C^1\)-regularity is immediate, and Assumption (A2) holds precisely when the modes \(\psi_1,\ldots,\psi_m\) are linearly independent.

\begin{cor}[Affine families near a constant reference in two dimensions]
\label{cor:constant-reference-two-dimensional-family}
Let \(d=2\), let \(\bar a=a_0\) be constant with
\(a_{\min}<a_0<a_{\max}\), and let \(\psi_1,\ldots,\psi_m\) be linearly
independent in \(L^\infty(\Omega)\). For \(\mathcal Z\) a sufficiently small
open ball about zero, \eqref{eq:linear-latent-representation} satisfies
(A1), (A2), and the separation condition
\eqref{eq:separation-condition-tangent-space} at \(z^\dagger=0\) with
\(\mathcal G=H^{-1/2}_\diamond(\partial\Omega)\). Consequently, at most
\(m\) Neumann excitations yield local Lipschitz stability near zero. The
discrete stability conclusions hold under the approximation assumption of
Theorem~\ref{thm:local-stability-discrete-finite-test-map}.
\end{cor}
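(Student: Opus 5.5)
The plan is to check (A1) and (A2) directly, then establish the separation condition \eqref{eq:separation-condition-tangent-space} at the constant reference $a_0$ using harmonic exponentials. The remaining conclusions will then follow from Theorems~\ref{thm:linearized-injectivity-on-the-tangent-space}, \ref{thm:finite-tests-from-linearized-injectivity}, \ref{thm:local-injectivity-from-finite-tests} and \ref{thm:local-stability-discrete-finite-test-map}.

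\textbf{Step 1: (A1) and (A2).} The map $\Phi$ is affine, so it is $C^\infty$ into $L^\infty(\Omega)$, and $D\Phi(z)\eta=\sum_j\eta_j\psi_j$ for every $z$. Choose a ball of radius $r$ with $r\sum_j\|\psi_j\|_{L^\infty}<\min(a_0-a_{\min},\,a_{\max}-a_0)$. On this ball $\Phi(\mathcal Z)\subset\mathcal A_{\mathrm{ad}}$, and $0$ is an interior point. Assumption (A2) is exactly the linear independence of the $\psi_j$.

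\textbf{Step 2: separation.} Here $a^\dagger=a_0$ is constant, so the background states $u(a_0;g_N)$ with $g_N\in H^{-1/2}_\diamond(\partial\Omega)$ are precisely the harmonic functions in $H^1(\Omega)$ normalized to have zero boundary mean. Given any harmonic $H^1$ function $\tilde u$, take $g_N=a_0\partial_n\tilde u$. This datum is compatible, it lies in $H^{-1/2}$ because $\Delta\tilde u=0$, and it reproduces $\tilde u$ up to a constant; constants do not affect gradients. Now let $\delta a\in T^\Phi_0$ satisfy $(\delta a\nabla u,\nabla v)_\Omega=0$ for all such $u,v$. For $\xi\in\mathbb R^2$, take the harmonic exponentials $u=\mathrm{e}^{x\cdot\zeta_1}$ and $v=\mathrm{e}^{x\cdot\zeta_2}$ with complex $\zeta_i$ satisfying $\zeta_i\cdot\zeta_i=0$. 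Identify $\mathbb R^2$ with $\mathbb C$, and set $\zeta_1=\tfrac{1}{2}(\xi_\perp+i\xi)$-type vectors. Concretely, use $\zeta_1=(i\bar w,\ -\bar w)$-style choices so that $\zeta_1+\zeta_2=i\xi$ while $\zeta_1\cdot\zeta_2\neq0$ whenever $\xi\ne0$. These functions are smooth on $\bar\Omega$. Their real and imaginary parts are real harmonic functions, so bilinearity extends the vanishing to the complex products. This gives $(\zeta_1\cdot\zeta_2)\int_\Omega\delta a\,\mathrm{e}^{ix\cdot\xi}\,dx=0$, so the Fourier transform of $\delta a\chi_\Omega$ vanishes for all $\xi\neq0$. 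By continuity of the Fourier transform of an $L^1$ function it also vanishes at $\xi=0$, and hence $\delta a=0$.

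The main obstacle is the explicit choice of $\zeta_1,\zeta_2$ in two dimensions with $\zeta_1\cdot\zeta_2\ne0$. I would use complex notation. The functions $\mathrm{e}^{\alpha z}$ and $\mathrm{e}^{\beta\bar z}$ are harmonic, and their gradients $\alpha\mathrm{e}^{\alpha z}(1,i)$ and $\beta\mathrm{e}^{\beta\bar z}(1,-i)$ have dot product $2\alpha\beta\,\mathrm{e}^{\alpha z+\beta\bar z}$. Given $\xi\ne0$, choose $\alpha,\beta$ with $\alpha z+\beta\bar z=i\,x\cdot\xi$, namely $\alpha=i\bar\xi_c/2$ and $\beta=i\xi_c/2$, where $\xi_c=\xi_1+i\xi_2$. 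Then $\alpha\beta=-|\xi|^2/4\neq0$.

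\textbf{Step 3: conclusion.} Theorem~\ref{thm:linearized-injectivity-on-the-tangent-space} yields \eqref{eq:linearized-injectivity-tangent-space}. Theorem~\ref{thm:finite-tests-from-linearized-injectivity} then gives $L\le m$ tests with $M_L$ positive definite, and Theorem~\ref{thm:local-injectivity-from-finite-tests} gives local Lipschitz stability near zero. For sufficiently fine meshes under the approximation property \eqref{eq:standard-approximation-property-discrete-spaces}, Theorem~\ref{thm:local-stability-discrete-finite-test-map} transfers this stability to the discrete map.
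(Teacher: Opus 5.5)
Your proof is correct and follows the paper's argument. You verify (A1) and (A2) from the affine structure on a small ball. You then use Calder\'on's harmonic exponentials, whose gradient product is $-\tfrac{|\xi|^2}{2}e^{ix\cdot\xi}$, with real and imaginary parts (after subtracting boundary means) as admissible background states, and conclude by continuity of the Fourier transform of the zero extension of $\delta a$. Your holomorphic/antiholomorphic pair $e^{\alpha z}$, $e^{\beta\bar z}$ is exactly the paper's $u_\pm=e^{(ik\pm\xi)\cdot x/2}$ written in complex notation; the vague intermediate phrase about ``$(i\bar w,-\bar w)$-style'' choices is superseded by your explicit $\alpha,\beta$.
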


\begin{proof}
Admissibility follows by taking \(\mathcal Z\) small, and linear independence
gives (A2). To verify separation, we use the constant-reference argument of
Calder\'on \cite{Calderon1980}. For \(0\ne k\in\mathbb R^2\), choose
\(\xi\in\mathbb R^2\) with \(\xi\cdot k=0\) and \(|\xi|=|k|\). The
functions \(u_\pm(x)=e^{(ik\pm\xi)\cdot x/2}\) are harmonic and satisfy
\begin{equation}
\nabla u_+\cdot\nabla u_-=-\frac{|k|^2}{2}e^{ik\cdot x}
\label{eq:constant-reference-harmonic-products}
\end{equation}
Their conormal data \(a_0\nabla u_\pm\cdot n\) have zero
integral, and subtracting the boundary means makes their real and imaginary
parts admissible background states. If \(\delta a\) satisfies the
orthogonality in \eqref{eq:separation-condition-tangent-space}, bilinear
extension therefore gives
\begin{equation}
\int_\Omega\delta a(x)e^{ik\cdot x}\,dx=0
\quad \forall k\in\mathbb R^2\setminus\{0\}
\label{eq:constant-reference-fourier-separation}
\end{equation}
The zero extension of \(\delta a\) belongs to \(L^1(\mathbb R^2)\), so its
Fourier transform is continuous and vanishes everywhere. Thus \(\delta a=0\).
Theorems~\ref{thm:linearized-injectivity-on-the-tangent-space},
\ref{thm:finite-tests-from-linearized-injectivity}, and
\ref{thm:local-injectivity-from-finite-tests} give the remaining claims.
\end{proof}

This example includes piecewise constant modes on a fixed partition. The
selected excitations depend on the family and reference coefficient; the
corollary does not verify the prescribed tests or learned representations used
in the numerical experiments.

\paragraph{Nonlinear Decoder Representation.}
\label{par:nonlinear-decoder-representation}
In the learned setting, a decoder \(D_\vartheta:\mathcal{Z}\to L^\infty(\Omega)\) generates a normalized coefficient field. We set
\begin{align}
\widehat a(z)&=D_\vartheta(z)
\qquad
\text{with}
\qquad
0\leq \widehat a(z)\leq 1\quad\text{a.e. in }\Omega
\label{eq:normalized-nonlinear-decoder-output}\\
\Phi(z)&=a_{\min}+(a_{\max}-a_{\min})\widehat a(z)
\label{eq:nonlinear-decoder-representation}
\end{align}
The decoder is obtained in an offline stage from a family of material samples and is then held fixed during inversion. The bounded output in \eqref{eq:normalized-nonlinear-decoder-output} enforces \(\Phi(\mathcal{Z})\subset\mathcal{A}_{\mathrm{ad}}\). If \(D_\vartheta\) is \(C^1\), then \(\Phi\) satisfies Assumption (A1), while Assumption (A2) additionally requires \(D\Phi(z^\dagger)\) to have full column rank. These properties are not automatic consequences of training.

\paragraph{Connection to the Numerical Experiments.}
Section~\ref{sec:numerical-experiments} uses the analytical and nonlinear-decoder representations and discusses their relationship to the regularity assumptions. The linear representation above provides a direct example of the abstract framework.

%% file: sec-numerical-experiments-rev0.tex
\section{Numerical Experiments}
\label{sec:numerical-experiments}
We illustrate the proposed method by reconstructing a material coefficient \(a:\Omega\to\mathbb{R}\) from finitely many Neumann boundary excitations, each producing a full discrete Dirichlet trace. All experiments were implemented in JAX \cite{bradbury2018jax}, and the machine learning models were trained on an NVIDIA RTX PRO 2000 Blackwell Generation GPU.

\paragraph{Scope of the Numerical Experiments.}
\label{par:numerical-experiment-scope}
The theoretical results are conditional on local differentiability and nondegeneracy assumptions. The experiments below are intended to illustrate the computational reconstruction pipeline and its cost, rather than to numerically verify the separation condition or its sufficient CGO criterion. In particular, all computations are two-dimensional and use pixelwise piecewise constant coefficient fields, so the smooth, \(d\ge 3\) CGO result in Appendix~\ref{sec:cgo-verification-of-the-separation-condition} is not invoked in the numerical examples.

The examples cover different regularity regimes. In the learned ellipse experiment, the ELU decoder and surrogate define \(C^1\) maps between finite-dimensional spaces and are therefore compatible with the regularity framework, although the derivative nondegeneracy condition and the required \(C^1\)-surrogate accuracy are not verified numerically. The binary analytical ellipse map is not \(C^1\) as a map into \(L^\infty(\Omega)\), while the ReLU-based decoder and surrogate used for the crack model are not globally \(C^1\). These cases are included as empirical nonsmooth tests outside the global assumptions of the analysis. Moreover, targets drawn from the underlying coefficient families need not lie exactly in the image of the trained decoder, so the learned experiments may also contain the representation error discussed in the introduction.

\subsection{Problem Setup}
\label{subsec:problem-setup}
\paragraph{Discretization.}
Throughout this section, we consider the two-dimensional square domain \(\Omega=[-1,1]^2\subset\mathbb{R}^2\). For a fixed integer \(p\geq 1\), we partition \(\Omega\) into a uniform
\(p\times p\) Cartesian grid. Denoting the resulting collection of square
cells by \(\mathcal T_p=\{Q_{ij}\}_{i,j=1}^p\), each cell \(Q_{ij}\) has side
length \(2/p\). We represent the material coefficients as
piecewise constant functions on the partition \(\mathcal T_p\). We therefore
introduce the admissible pixel coefficient set
\begin{equation}
X_p
=
\left\{
a\in \mathcal{A}_{\mathrm{ad}}
:
a|_{Q_{ij}}
\text{ is constant for every }
Q_{ij}\in\mathcal T_p
\right\}\label{eq:numerics-pixel-coefficient-space}
\end{equation}
Every coefficient \(a\in X_p\) is uniquely determined by its cell values
\(a_{ij}=a|_{Q_{ij}}\), \(i,j=1,\ldots,p\), and may therefore be identified
with a vector in \([a_{\min},a_{\max}]^{p^2}\). Equivalently, we may regard \(a\) as a
\(p\times p\) pixel image. The admissible coefficient families considered in the numerical experiments form structured subsets of \(X_p\) that the representation map \(\Phi\) is intended to capture. Throughout the experiments, we set \(p=64\), so the ambient pixel space has dimension \(p^2=4\,096\).

Given a coefficient \(a\) and boundary datum \(g_N\), the forward Neumann problem is discretized by the Galerkin finite element method described 
in Section~\ref{subsec:finite-element-method}. We construct a regular triangulation \(\mathcal{T}_h\) 
of \(\Omega\) by diagonally splitting each square cell in \(\mathcal T_p\) into two triangles. For \(p=64\), the square-grid spacing is \(1/32\), and the resulting right-triangular elements have diameter \(\sqrt{2}/32\). We use continuous, piecewise linear finite elements on \(\mathcal T_h\). The boundary-mean-zero constraint defining \(V_h^0\) in \eqref{eq:discrete-mean-zero-space} is enforced with a Lagrange multiplier.

\paragraph{Boundary Measurements.}
In the numerical experiments, the boundary data consist of finitely many
Neumann patterns \((g_N^\ell)_{\ell=1}^L\) imposed on
\(\partial\Omega\), together with the Dirichlet traces of the corresponding discrete finite element solutions \(u_h^\ell\). Since \(\Omega=[-1,1]^2\), we parametrize \(\partial\Omega\)
counterclockwise by arclength \(s\in[0,8]\) starting at the corner \((-1,-1)\). For \(\ell=1,\ldots,L\), we define the Neumann patterns by
\begin{equation}
g_N^{\ell}(s)=
\begin{cases}
\displaystyle
\sin\!\left(\frac{2\pi \lceil \ell/2\rceil s}{8}\right)
& \text{if } \ell \text{ is odd} \\[1ex]
\displaystyle
\cos\!\left(\frac{2\pi \lceil \ell/2\rceil s}{8}\right)
& \text{if } \ell \text{ is even}
\end{cases}\label{eq:numerics-neumann-modes}
\end{equation}
These \(L\) Fourier modes are smooth, orthogonal in \(L^2(\partial\Omega)\), have zero mean over \(\partial\Omega\), and probe low-to-moderate frequencies.

For each excitation, the discrete Dirichlet trace is stored as the vector of
FEM solution values at all boundary nodes. Collecting these vectors gives the
nodal coordinate map \(\Lambda_h\) from
\eqref{eq:method-coordinate-isomorphism-discrete-output}. The surrogate predicts
all these nodal values without imposing a zero boundary mean on its output.

\paragraph{Dataset Independence.}
Within each coefficient family, the coefficient samples used for the VAE
dataset, surrogate-data construction, and reconstruction targets are independent
draws from the same coefficient model, generated using distinct random seeds.
The VAE and surrogate have separate training and validation splits, with sizes
specified below. Reconstruction targets are not selected from any training or
validation set.

\subsection{Variational Autoencoder for Material Representation}
\label{subsec:numerics-vae}
We construct a material representation map
\begin{equation}
\Phi:\mathcal Z\to X_p \subset\mathcal A_{\mathrm{ad}}\label{eq:numerics-material-representation-map}
\end{equation}
by training a variational autoencoder on coefficient samples. A variational autoencoder (VAE) is a generative latent-variable model that learns a probabilistic low-dimensional representation of a dataset. It consists of an encoder network \(E_{\upsilon}\) and a decoder network \(D_\vartheta\) with trainable parameters \(\upsilon\) and \(\vartheta\), respectively. For the VAE, we use normalized pixel images in the set
\(\widehat X_p\simeq[0,1]^{p^2}\). In the VAE formulas below, \(a\) denotes
a normalized input image; the positive offset defining the physical coefficient
is introduced in \eqref{eq:numerics-representation-map}. Given such an input
\(a\), the encoder parametrizes a probability distribution on \(\mathcal{Z}=\mathbb{R}^m\) by
\begin{equation}
q_\upsilon(z\mid a)=\mathcal N(\mu_\upsilon(a),\operatorname{diag}(\sigma_\upsilon^2(a)))\label{eq:numerics-latent-distribution}
\end{equation}
A latent variable is sampled using the reparameterization
\begin{equation}
z=\mu_\upsilon(a)+\sigma_\upsilon(a)\odot\varepsilon ,
\qquad
\varepsilon\sim\mathcal N(0,I)\label{eq:numerics-vae-reparameterization}
\end{equation}
and the decoder maps this latent variable back to a reconstruction
\(\widehat{a}=D_\vartheta(z)\).
We set the latent dimension \(m\) much smaller than the ambient pixel dimension \(4\,096\). The encoder and decoder are trained so that the reconstruction \(\widehat{a}\) is close to the input \(a\), while the latent distribution \(q_\upsilon(z\mid a)\) remains close to the standard Gaussian distribution in the low-dimensional latent space \(\mathcal{Z}\). In our setting, the VAE can be viewed as the composition
\begin{equation}
\widehat X_p \stackrel{E_\upsilon}{\longrightarrow} \mathcal{Z} \stackrel{D_\vartheta}{\longrightarrow} \widehat X_p\label{eq:numerics-vae-composition}
\end{equation}
by identifying \(E_{\upsilon}(a)=\mu_\upsilon(a)\). Samples are generated by drawing latent vectors from the standard Gaussian distribution and mapping them to \(\widehat X_p\) with the decoder \(D_\vartheta\). Figure~\ref{fig:vae-with-samples} illustrates the encoding, sampling, and decoding steps.

\begin{figure}
    \centering
    \includegraphics[width=0.75\textwidth]{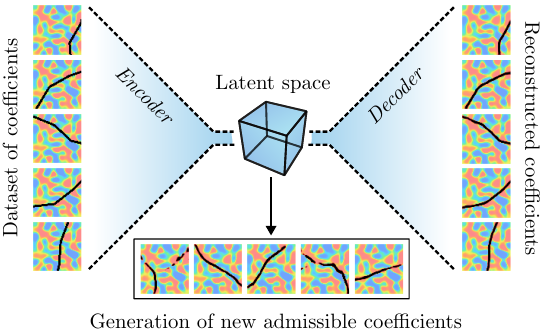}
    \caption{Schematic of the variational autoencoder (VAE). The encoder \(E_\upsilon\) maps a normalized input image \(a\in\widehat X_p\) to the parameters of a distribution on \(\mathcal Z\), from which a latent representation \(z\) is sampled. The decoder \(D_\vartheta\) maps \(z\) to a reconstructed normalized image \(\widehat a\in\widehat X_p\). New normalized images are generated by sampling from the latent prior and applying the decoder.}
    \label{fig:vae-with-samples}
\end{figure}

\paragraph{Admissibility.}
Throughout the numerical experiments, we take \(a_{\max}=a_{\min}+1\) and design the VAE so that the decoder outputs a pixel image \(\widehat a=D_\vartheta(z)\) with pixel values in the interval \([0,1]\). We then define the physical representation map by
\begin{equation}
    \Phi(z)
    =
    a_{\min}
    +
    D_\vartheta(z)\label{eq:numerics-representation-map}
\end{equation}
where \(a_{\min}>0\) is fixed. Consequently, \(a_{\min}\leq\Phi(z)\leq a_{\max}\) almost everywhere in \(\Omega\), so \(a=\Phi(z)\) belongs to the pixel coefficient set \(X_p\), and hence to the full admissible class \(\mathcal{A}_{\mathrm{ad}}\). Accordingly, we use datasets of pixel images with values in \([0,1]\) to train the VAEs.

\begin{rem}
    Throughout the numerical experiments, the physical coefficient used in the forward and inverse problems includes the positive offset \(a_{\min}\), so that
    \begin{equation}
    a = a_{\min}+\widehat a\label{eq:numerics-physical-coefficient-offset}
    \end{equation}
    where \(\widehat a\) denotes the coefficient field represented by either an analytical parametrization or, in the learned setting, by the decoder output. To use a common visualization scale, all plots of coefficient fields display only \(\widehat a=a-a_{\min}\), so the constant offset \(a_{\min}\) is omitted. Thus, the values shown in the figures lie in \([0,1]\), while the corresponding physical coefficients used in the computations are obtained by adding \(a_{\min}\).
    \end{rem}

\paragraph{Architecture.}
The encoder consists of four convolutional blocks with \(4\times4\) kernels and stride two, reducing the spatial resolution according to
\begin{equation}
64\times64
\;\rightarrow\;
32\times32
\;\rightarrow\;
16\times16
\;\rightarrow\;
8\times8
\;\rightarrow\;
4\times4\label{eq:numerics-vae-encoder-resolution}
\end{equation}
while increasing the number of feature channels from \(1\) to
\(32\), \(64\), \(128\), and \(256\), respectively. Each convolutional layer
is followed by an activation function. The resulting \(4\times4\times256\) feature
tensor is flattened and processed by two fully connected layers with
hidden dimensions \(512\) and \(256\). Two linear output heads then produce
the mean vector \(\mu_\upsilon(a)\in\mathbb R^m\) and log-variance vector
\(\log\sigma_\upsilon^2(a)\in\mathbb R^m\), defining the Gaussian latent distribution \(q_\upsilon(z|a)\) in \eqref{eq:numerics-latent-distribution}.

The decoder is constructed symmetrically. Starting from a latent vector
\(z\in\mathbb R^m\), two fully connected layers expand the representation to a
\(4\times4\times256\) feature tensor. Four decoder blocks then progressively
increase the spatial resolution by nearest-neighbor upsampling followed by
\(3\times3\) convolutions, with channel dimensions
\(256\rightarrow128\rightarrow64\rightarrow32\rightarrow16\). An activation function is applied after each decoder convolution.
A final \(3\times3\) convolution maps the \(16\)-channel feature map to a single output channel, and a sigmoid activation restricts the reconstructed pixel values to the interval \([0,1]\). In the numerical experiments, we use the rectified linear unit (ReLU) and the exponential linear unit (ELU) as activation functions in the convolutional and fully connected layers.

\paragraph{Training Objective.}
The VAE is trained by minimizing an objective consisting of three terms: a pixelwise reconstruction error, an edge-aware
gradient matching term that promotes accurate recovery of sharp interfaces,
and a Kullback--Leibler regularization term that encourages the latent
distribution to remain close to the standard Gaussian distribution. More precisely, if
\(\{a_i\}_{i=1}^{N_{\mathrm{tr}}}\) is a training set of images and \(\{\widehat{a}_i\}_{i=1}^{N_{\mathrm{tr}}}\) are the reconstructions,
the loss functional takes the form
\begin{equation}
     L^{\mathrm{VAE}}(\upsilon,\vartheta)
    =\frac{1}{N_{\mathrm{tr}}}\sum_{i=1}^{N_{\mathrm{tr}}}
    \bigg(\mathcal L_{\mathrm{rec}}(a_i,\widehat{a}_i)
    +
    \lambda^{\mathrm{edge}}
    \mathcal L_{\mathrm{edge}}(a_i,\widehat{a}_i)
    +
    \beta
    \mathcal L_{\mathrm{KL}}(a_i)\bigg)\label{eq:numerics-vae-loss}
\end{equation}
where \(\lambda^{\mathrm{edge}}\) and \(\beta\) are positive weights. The three terms are defined by
\begin{align}
    \mathcal L_{\mathrm{rec}}(a,\widehat{a})
    &=\frac{1}{p^2}
    \|a-\widehat{a}\|_2^2\label{eq:numerics-vae-reconstruction-loss}\\
    \mathcal L_{\mathrm{edge}}(a,\widehat{a})
    &=\frac{1}{p^2}\bigg(
    \|\Delta_x a-\Delta_x \widehat{a}\|_2^2 + \|\Delta_y a-\Delta_y \widehat{a}\|_2^2\bigg)\label{eq:numerics-vae-edge-loss}\\
    \mathcal L_{\mathrm{KL}}(a)
    &=
    D_{\mathrm{KL}}
    \!\left(
    q_\upsilon(z\mid a)\,\|\,\mathcal N(0,I)
    \right)\label{eq:numerics-vae-kl-loss}
\end{align}
where the horizontal and vertical forward differences on the pixel grid \(\mathcal T_p\) are defined by
\((\Delta_x a)_{i,j}=a_{i,j+1}-a_{i,j}\) and
\((\Delta_y a)_{i,j}=a_{i+1,j}-a_{i,j}\), respectively. Here, \(D_{\mathrm{KL}}\) is the Kullback--Leibler divergence between the two Gaussian distributions in \eqref{eq:numerics-vae-kl-loss}.

\subsection{Neural Forward Surrogate}
\label{subsec:numerics-surrogate}
The purpose of the surrogate is to avoid repeated expensive finite element solves during
online inversion. The finite-test latent forward map \(N^{\Phi}_{L,h}:\mathcal Z\to Y_{L,h}\) and its coordinate representation \(\widehat N^{\Phi}_{L,h}:\mathcal Z\to\mathbb R^{P_h}\) are defined in \eqref{eq:training-target-finite-test-map} and \eqref{eq:method-coordinate-representation-discrete-forward-map}, respectively. The surrogate architecture is given in \eqref{eq:two-layer-network-discrete-forward-map}. In the experiments, we use either ReLU or ELU activation functions.

\paragraph{Training Data.}
Assume that a VAE has been pretrained for a coefficient family and that \(\Phi\) is defined by \eqref{eq:numerics-representation-map}. Given a sample \(\{a_i\}_{i=1}^{M_{\mathrm{tr}}}\) of normalized coefficients from this family, we use the encoder \(E_\upsilon\) to obtain latent vectors \(\{z_i\}_{i=1}^{M_{\mathrm{tr}}}\). For each \(z_i\), we compute the target output \(y_i=\widehat N^\Phi_{L,h}(z_i)\). The surrogate training set is therefore \(\{(z_i,y_i)\}_{i=1}^{M_{\mathrm{tr}}}\).

We use an overparameterized surrogate with more hidden units than training pairs. Specifically, we set
\begin{equation} 
M=4M_{\mathrm{tr}}\label{eq:numerics-overparametrization}
\end{equation}

Given the training data \(\{(z_i,y_i)\}_{i=1}^{M_{\mathrm{tr}}}\), the network parameters are determined by minimizing the mean squared prediction error over the training set. Specifically, we consider
\begin{align}
\theta^\star
&\in\operatorname*{arg\,min}_{\theta}L^{\mathrm{Net}}(\theta)\label{eq:numerics-nn-minimizer}\\
L^{\mathrm{Net}}(\theta)
&=\frac{1}{P_h M_{\mathrm{tr}}}\sum_{i=1}^{M_{\mathrm{tr}}}
\|\widehat{N}^{\Phi}_{L,h,M}(z_i)-y_i\|_{\mathbb{R}^{P_h}}^2\label{eq:numerics-nn-least-squares-problem}
\end{align}
Here, \(\theta\) collects the trainable parameters of \(\widehat{N}^{\Phi}_{L,h,M}\), and \(\theta^\star\) denotes a minimizer. This is the coordinate form of training against the discrete boundary response map \(N^{\Phi}_{L,h}\). Since the surrogate approximates boundary responses, we assess its accuracy primarily through the boundary-data misfit.

When an analytical parametrization \(a=\Phi(\mu)\) is available, we instead sample the parameter space and map each parameter \(\mu_i\) to \(X_p\) using \(\Phi\). We then compute \(y_i=\widehat N^{\Phi}_{L,h}(\mu_i)\) and train the surrogate on the resulting parameter--response pairs.

\subsection{Inverse Problem Objectives}
\label{subsec:numerics-inverse-formulations}

Let \(a^\dagger\) denote the target coefficient. We use the corresponding
noiseless discrete boundary observations \(\datah\) defined in
\eqref{eq:discrete-data-vector}. Corollary~\ref{cor:l2-observation-stability} establishes local
stability directly in the boundary \(L^2(\partial\Omega)^L\)-norm, with a
constant independent of \(h\) on sufficiently fine meshes under the stated
hypotheses. This avoids the mesh-dependent conversion from the
\(H^{1/2}\)-norm. The required nondegeneracy and approximation conditions are
not certified for the numerical tests below. We approximate the target
coefficient by minimizing the surrogate objective
\begin{equation}
J^{\mathrm{Net}}(z)
=
\frac12
\left\|
N^{\Phi}_{L,h,M}(z)-\datah
\right\|_{L^2(\partial\Omega)^L}^2\label{eq:numerics-boundary-objective-surrogate}
\end{equation}
using gradient descent with backtracking line search. In accordance with \eqref{eq:discrete-objective-functional-finite-tests}, we also introduce the loss

\begin{equation}
    J^{\mathrm{FEM}}(z)
    =
    \frac12
    \left\|
    N^{\Phi}_{L,h}(z)-\datah
    \right\|_{L^2(\partial\Omega)^L}^2\label{eq:numerics-boundary-objective-fem}
\end{equation}
for reconstruction with the FEM forward map. We omit regularization in both objectives. To minimize the FEM objective, we differentiate the discrete boundary response map with respect to the latent variable \(z\), using JAX automatic differentiation through the finite element solver.

Both boundary objectives are evaluated using the boundary mass matrix
\((M_{\partial})_{ij}=(\varphi_i,\varphi_j)_{\partial\Omega}\), where
\(\{\varphi_i\}\) is the nodal basis of the piecewise linear trace space.
Writing \(r_\star^\ell(z)\) for the vector of predicted minus observed
boundary nodal values in experiment \(\ell\), we have
\begin{equation}
J^\star(z)=\frac12\sum_{\ell=1}^L
r_\star^\ell(z)^\top M_{\partial}r_\star^\ell(z)
\qquad \star\in\{\mathrm{Net},\mathrm{FEM}\}
\label{eq:numerics-boundary-mass-matrix-objective}
\end{equation}
Each quadratic form is the squared boundary \(L^2\)-norm of the corresponding
piecewise linear residual, including any error in the surrogate's predicted
boundary mean.

We consider two families of piecewise constant coefficients. The first consists of elliptic inclusions; the second combines a smooth random background field with randomly generated localized crack-like defects.

\subsection{Elliptic Inclusions}
\label{subsec:elliptic-inclusions}

The ellipse model is a low-complexity class of piecewise constant
coefficients on the uniform pixel grid \(\mathcal{T}_p\) of \(\Omega\). The elliptic inclusions are parametrized by
\(
\mu=(c_x,c_y,r_1,r_2,\alpha),
\)
where \((c_x,c_y)\) is the ellipse center, \(r_1\) and \(r_2\) denote the
lengths of the principal semi-axes, and \(\alpha\) specifies the angle of
rotation relative to the coordinate axes. More precisely, the elliptic inclusions are defined by
\begin{equation}
E(\mu)
=
\left\{
(x,y):
\frac{\xi_1(x,y)^2}{r_1^2}
+
\frac{\xi_2(x,y)^2}{r_2^2}
\le 1
\right\}\label{eq:numerics-ellipse-set}
\end{equation}
where
\(\xi_1(x,y)
=(x-c_x)\cos\alpha+(y-c_y)\sin\alpha\)
and
\(\xi_2(x,y)
=-(x-c_x)\sin\alpha+(y-c_y)\cos\alpha\).
For a parameter vector \(\mu\in \IR^5\) with positive semi-axes, we obtain the corresponding physical coefficient field \(a\in X_p\) by defining its value on each pixel \(Q_{ij}\) according to
\begin{equation}
a|_{Q_{ij}}=\Phi(\mu)|_{Q_{ij}}=a_{\min}+\begin{cases} 1 & \text{if } c_{ij}\in E(\mu) \\ 0 & \text{otherwise} \end{cases}\label{eq:numerics-ellipse-coefficient-field}
\end{equation}
where \(c_{ij}\) is the center of \(Q_{ij}\). In all experiments, we use \(a_{\min}=0.05\). Recall that the constant offset \(a_{\min}\) is omitted from the coefficient plots.

\paragraph{Sampling.}
We draw \(r_1\) and \(r_2\) independently and uniformly from \([0.12,0.45]\),
without an ordering constraint, and independently draw
\(\alpha\) uniformly from \([-\pi,\pi]\). Conditional on the semi-axes and
angle, the center is sampled uniformly from the rectangle of centers for which
the entire rotated ellipse is contained in \(\Omega\). The same parameter law
is used for the coefficient samples underlying the VAE and surrogate datasets,
the reconstruction targets, and the analytical initial guesses.

Examples of coefficient fields from this family are shown in Figure~\ref{fig:numerics-ellipses-examples}. 

This family serves two purposes. First, it provides a transparent benchmark in
which the relevant degrees of freedom are geometrically interpretable.
Second, it allows us to compare the latent reconstruction framework with a
classical finite-dimensional analytical parametrization. In this case the representation map \(\Phi\)
can either be defined analytically by \eqref{eq:numerics-ellipse-coefficient-field} or learned from realizations of the coefficient family. In all experiments involving this family, we use \(L=6\) boundary experiments with Neumann data defined by \eqref{eq:numerics-neumann-modes}.

\begin{figure}
    \centering
    \includegraphics[width=0.75\linewidth]{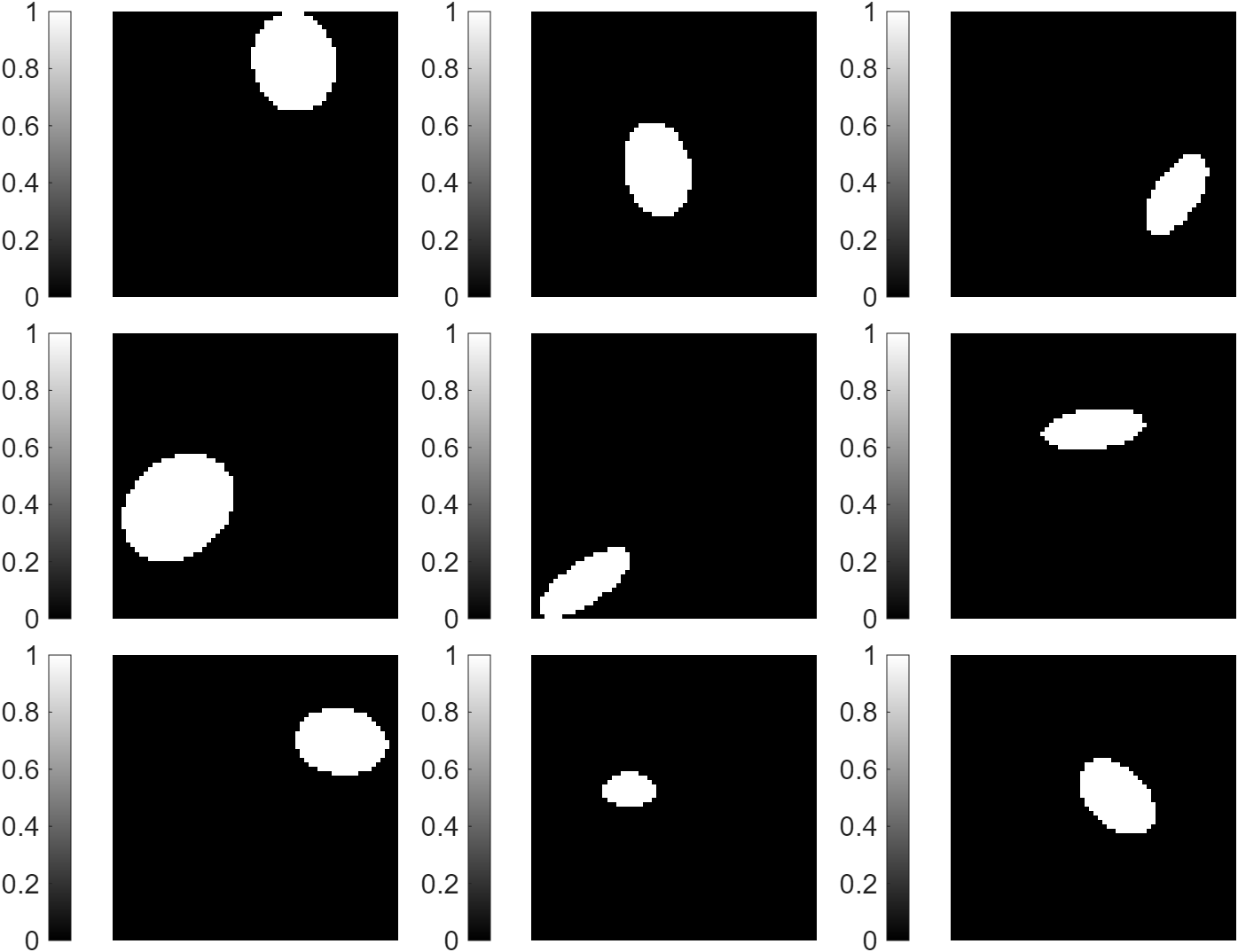}
    \caption{Examples of normalized coefficient contrasts \(\widehat a=a-a_{\min}\) from the elliptic-inclusion family.}
    \label{fig:numerics-ellipses-examples}
\end{figure}

\subsubsection{Analytical Parametrization}
In the first experiment, we train our surrogate network to map ellipse parameters \(\mu\) to the boundary responses corresponding to the associated coefficient fields given by \eqref{eq:numerics-ellipse-coefficient-field}. Thus, in this analytical model, the latent space is the ellipse parameter space, a subset of \(\IR^5\), and no VAE is used.

\paragraph{Training Details (Surrogate Network).}
To train the network, we generated \(5\,000\) ellipse parameter vectors and the corresponding boundary responses. We used \(M_{\mathrm{tr}}=4\,500\) pairs for training and held out \(500\) pairs for validation. In this test, we used the ELU activation function, and \eqref{eq:numerics-overparametrization} gives \(M=18\,000\) hidden units. We trained the network with batches of size \(100\) for \(10\,000\) epochs using the Adam optimizer with an initial learning rate of \(10^{-3}\), reduced by a factor of \(0.75\) every \(2\,000\) epochs. The training and validation loss curves are shown in Figure~\ref{fig:ellipse-analytical-network-loss}.

\begin{figure}
    \centering
    \includegraphics[width=0.4\linewidth]{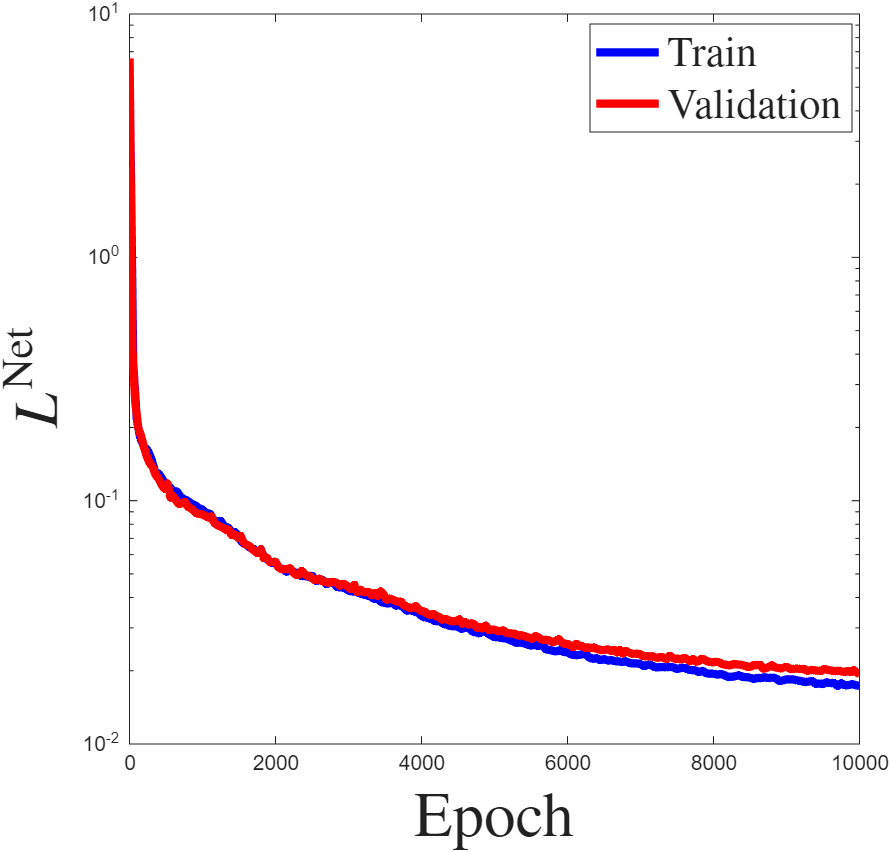}
    \caption{Training and validation losses of the surrogate network for the analytical ellipse parametrization.}
    \label{fig:ellipse-analytical-network-loss}
\end{figure}
\paragraph{Reconstruction.}
We generated two target coefficients by randomly sampling ellipse parameters. In this experiment, the representation map is given explicitly by \eqref{eq:numerics-ellipse-coefficient-field}. We use gradient descent with Armijo line search to minimize \(J^{\mathrm{Net}}(\mu)\), defined in \eqref{eq:numerics-boundary-objective-surrogate}. The optimization is performed in the parameter space \(\IR^5\), and the trained surrogate maps parameter vectors to boundary responses.

Each trial parameter vector is projected onto the coordinate bounds
\((c_x,c_y)\in[-1,1]^2\), \(r_1,r_2\in[0.12,0.45]\), and
\(\alpha\in[-\pi,\pi]\). The containment constraint used during sampling is
not enforced during inversion, so iterates may represent ellipses extending
beyond \(\Omega\).

To initialize the optimization, we randomly sample an ellipse parameter \(\mu^{(0)}\). Figures~\ref{fig:ellipse-sequence-1-analytical} and~\ref{fig:ellipse-sequence-2-analytical} show the reconstruction process for two targets. The figures display selected iterates and the final reconstructions after \(100\) gradient descent iterations. They also show the surrogate objective \(J^{\mathrm{Net}}(\mu)\), which is minimized, the corresponding FEM objective \(J^{\mathrm{FEM}}(\mu)\), and the \(L^2(\Omega)\)-error in the coefficient field along the iterates.

\begin{figure}
    \centering
    \includegraphics[width=\linewidth]{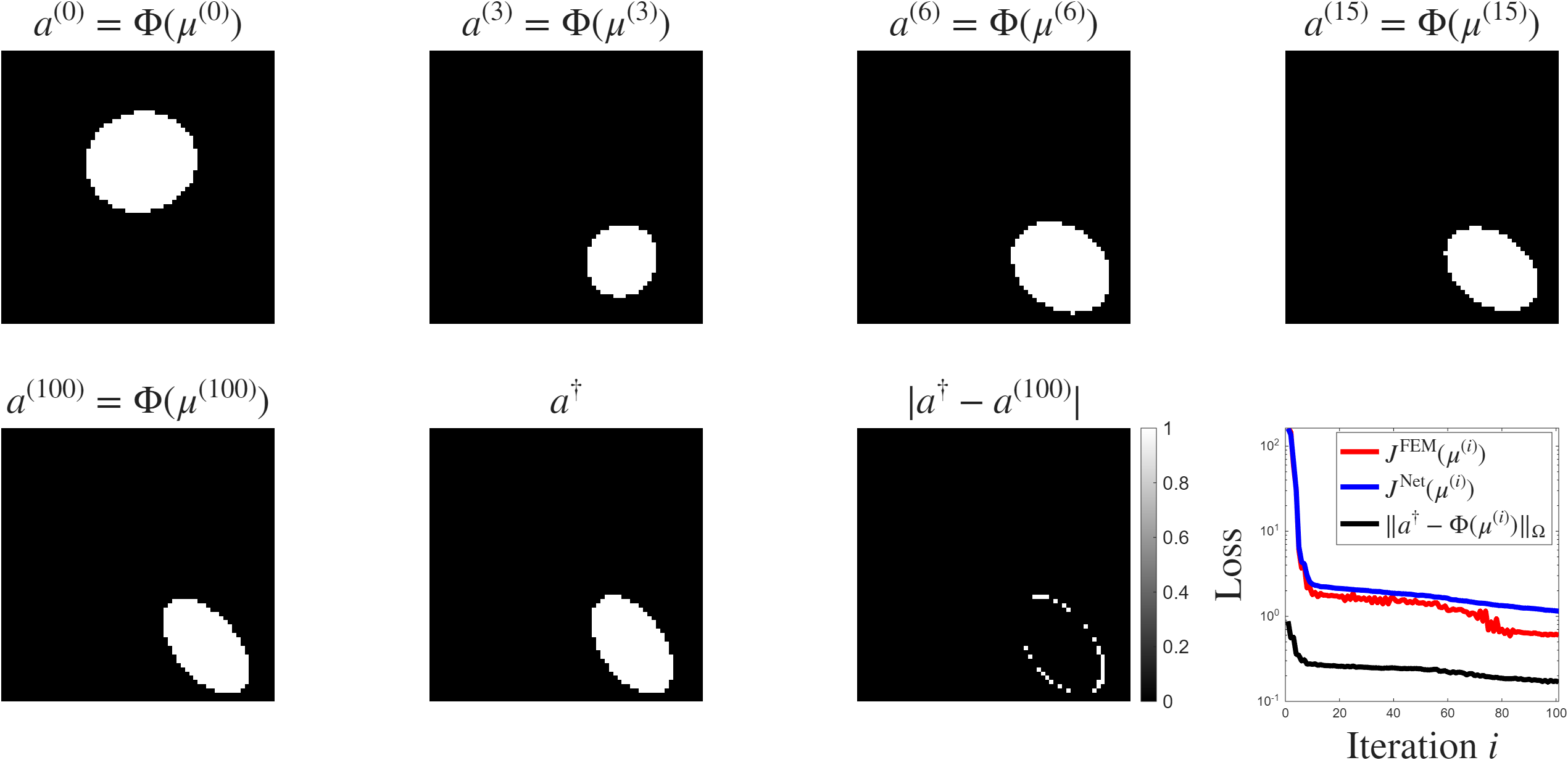}
    \caption{Surrogate-based reconstruction of a target coefficient \(a^\dagger\) using the analytical ellipse parametrization. \emph{Top row:} selected iterates \(a^{(i)}=\Phi(\mu^{(i)})\). \emph{Bottom row:} final reconstruction \(a^{(100)}\), target \(a^\dagger\), pointwise absolute error \(\lvert a^\dagger-a^{(100)}\rvert\), and histories of \(J^{\mathrm{Net}}(\mu^{(i)})\), \(J^{\mathrm{FEM}}(\mu^{(i)})\), and \(\lVert a^\dagger-a^{(i)}\rVert_{L^2(\Omega)}\).}
    \label{fig:ellipse-sequence-1-analytical}
\end{figure}

\begin{figure}
    \centering
    \includegraphics[width=\linewidth]{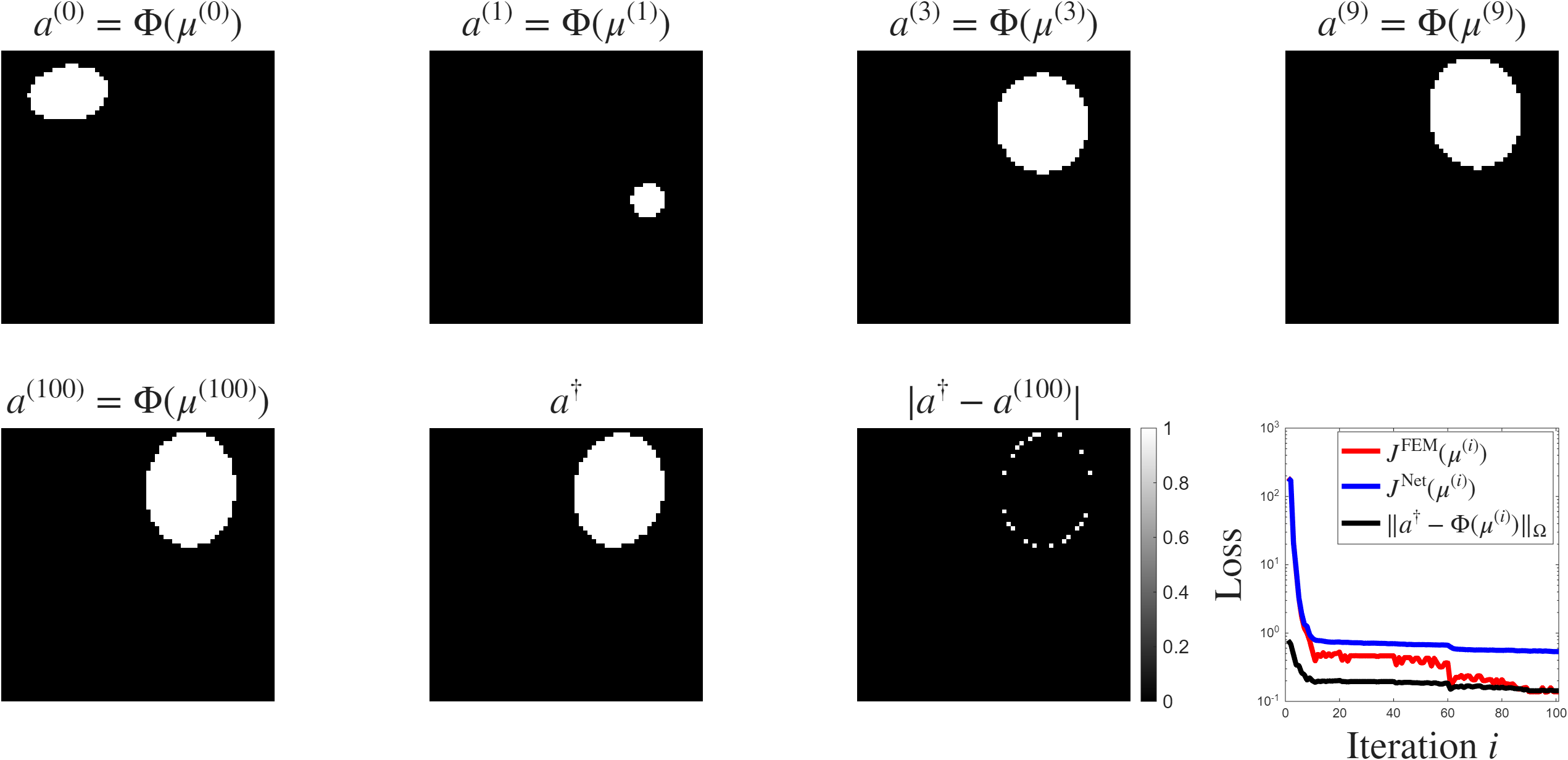}
    \caption{Surrogate-based reconstruction of another target coefficient \(a^\dagger\) using the analytical ellipse parametrization. \emph{Top row:} selected iterates \(a^{(i)}=\Phi(\mu^{(i)})\). \emph{Bottom row:} final reconstruction \(a^{(100)}\), target \(a^\dagger\), pointwise absolute error \(\lvert a^\dagger-a^{(100)}\rvert\), and histories of \(J^{\mathrm{Net}}(\mu^{(i)})\), \(J^{\mathrm{FEM}}(\mu^{(i)})\), and \(\lVert a^\dagger-a^{(i)}\rVert_{L^2(\Omega)}\).}
    \label{fig:ellipse-sequence-2-analytical}
\end{figure}
For the two displayed targets, the surrogate-based reconstructions closely match the target coefficients, with the visible discrepancies primarily confined to pixels near the ellipse boundaries. Along these optimization trajectories, the surrogate and FEM objectives follow similar trends, indicating agreement between the two objectives in the regions visited by the iterates.

\subsubsection{VAE Parametrization}
Next, we consider the setting in which the analytical parametrization of the elliptic inclusions is unavailable. Instead, we assume access only to sample coefficient fields generated from the underlying model. A variational autoencoder is trained on these samples to learn a low-dimensional representation, and the decoder defines the representation map \(\Phi\) in \eqref{eq:numerics-representation-map}. In this experiment, the latent dimension is \(m=6\), and we use the architecture described in Section~\ref{subsec:numerics-vae} with ELU activation functions.

\paragraph{Training Details (VAE).}
The VAE was trained for \(10\,000\) epochs using stochastic gradient descent (SGD) with a learning rate of \(0.1\) and a batch size of \(128\). To improve training stability, we used KL annealing, gradually increasing the weight \(\beta\) from \(10^{-8}\) to \(10^{-3}\) in the variational objective \eqref{eq:numerics-vae-loss}. The edge-aware weight \(\lambda^{\mathrm{edge}}\) in \eqref{eq:numerics-vae-loss} was fixed at \(1.0\) throughout training. The training set consisted of \(9\,000\) samples, with an additional \(1\,000\) samples reserved for validation. Figure~\ref{fig:VAE_ellipse} shows the annealing schedule and the training and validation loss curves.

\begin{figure}
    \centering
    \includegraphics[width=0.32\textwidth]{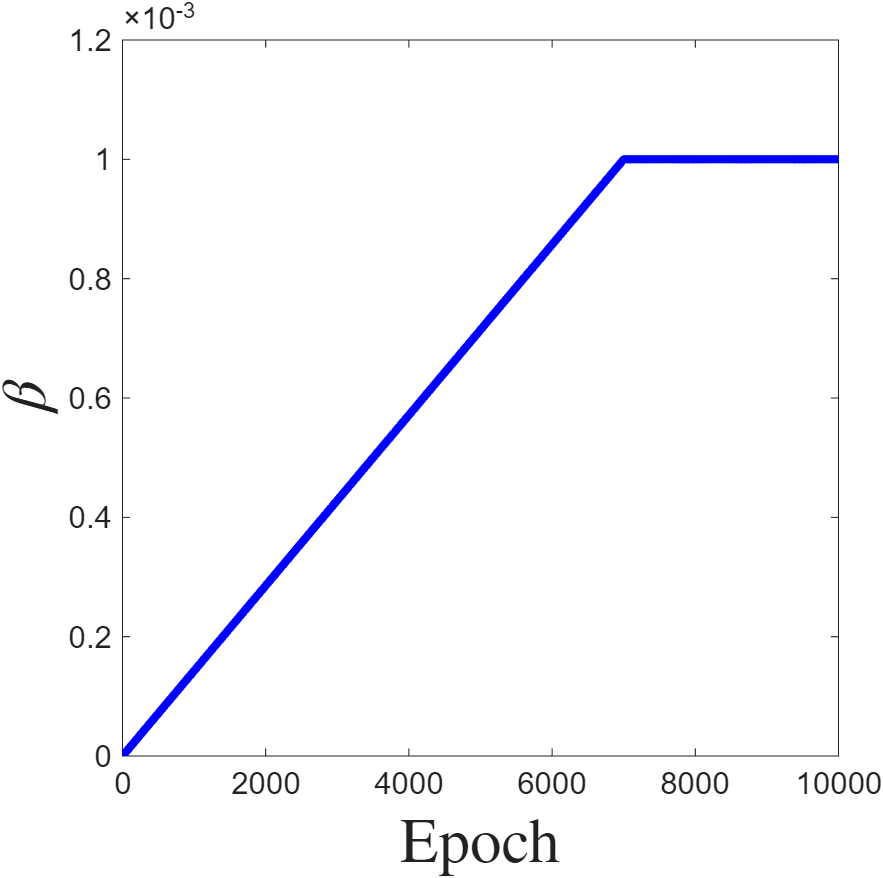}\hfill
    \includegraphics[width=0.32\textwidth]{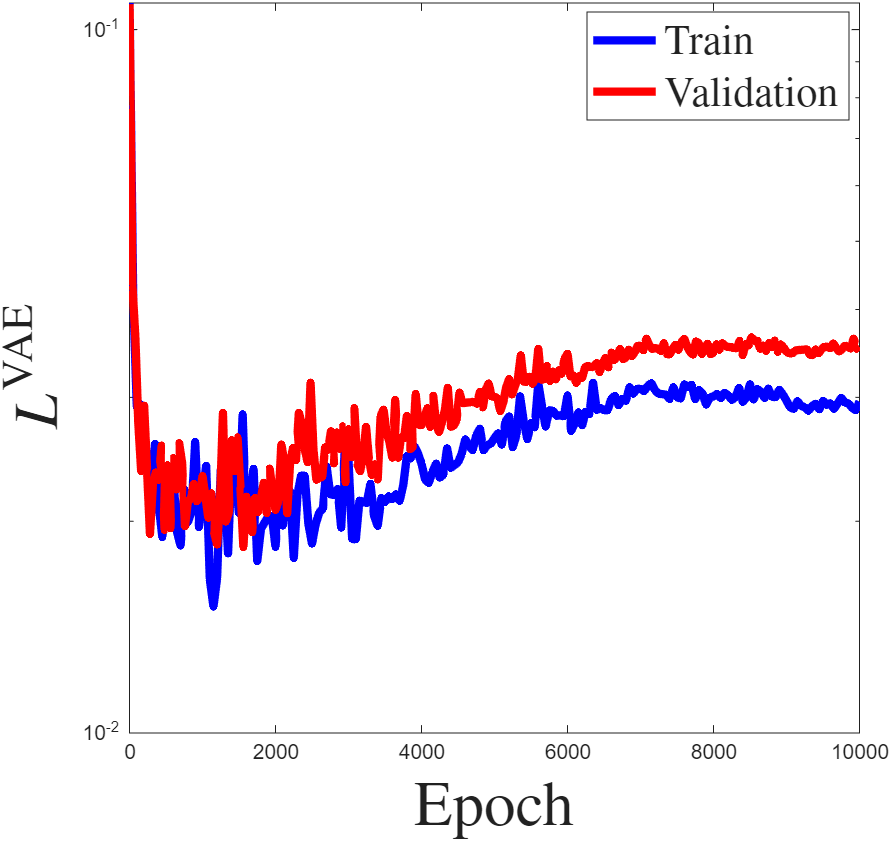}\hfill
    \includegraphics[width=0.32\textwidth]{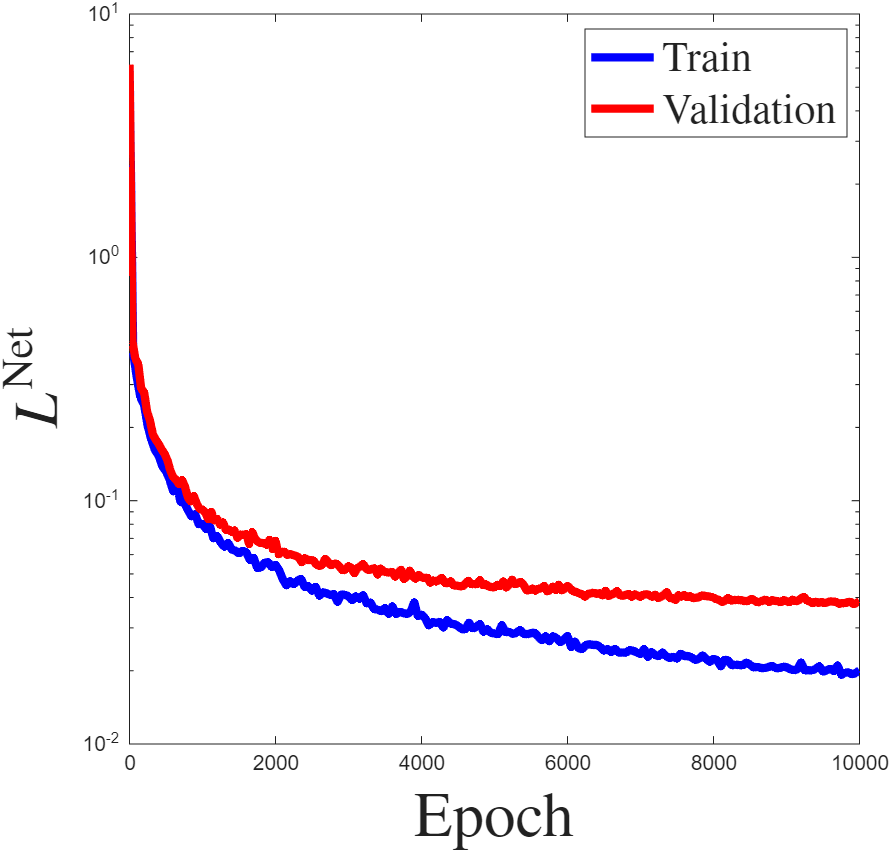}
    \caption{Training of the ellipse representation and surrogate.
    \emph{Left:} KL annealing schedule for the weight \(\beta\).
    \emph{Middle:} training and validation losses of the VAE.
    \emph{Right:} training and validation losses of the surrogate network.}
    \label{fig:VAE_ellipse}
    \label{fig:ellipse-VAE-network-loss}
\end{figure}

Since the VAE learns a probabilistic latent representation of the coefficient family, new coefficient fields can be generated by sampling latent vectors from the standard Gaussian prior and passing them through the decoder. Figure~\ref{fig:numerics-ellipse-samples} shows a collection of fields generated in this manner. The displayed samples exhibit geometric characteristics similar to those of the training data.

We then train a surrogate network to map latent vectors to the boundary responses of the decoded coefficient fields.

\begin{figure}
    \centering
    \includegraphics[width=0.75\linewidth]{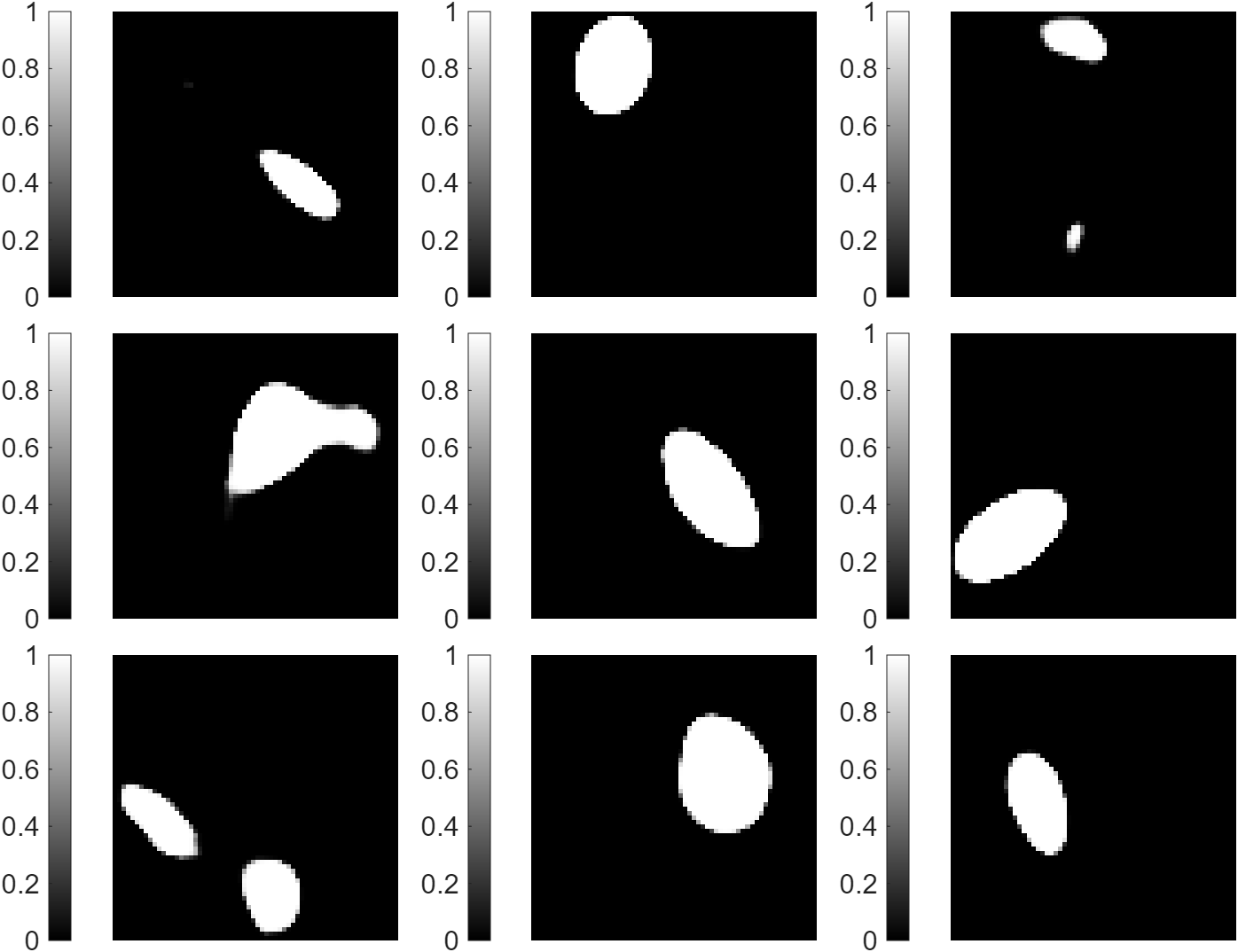}
    \caption{Examples of normalized coefficient contrasts \(\widehat a=a-a_{\min}\) generated by sampling latent vectors from the standard Gaussian prior and decoding them with the trained ellipse VAE.}
    \label{fig:numerics-ellipse-samples}
\end{figure}

\paragraph{Training Details (Surrogate Network).}
We trained the surrogate network using a dataset of \(5\,000\) latent-vector and boundary-response pairs constructed as described in Section~\ref{subsec:numerics-surrogate}, holding out \(500\) pairs for validation. We used the ELU activation function, and \eqref{eq:numerics-overparametrization} gives \(M=18\,000\) hidden units. We trained the network with batches of size \(128\) for \(10\,000\) epochs using the Adam optimizer with an initial learning rate of \(10^{-3}\), reduced by a factor of \(0.75\) every \(2\,000\) epochs. The training and validation loss curves are shown in Figure~\ref{fig:ellipse-VAE-network-loss}.

\paragraph{Reconstruction.}
To illustrate reconstruction in a learned latent space, we reused the two target coefficients from the analytical parametrization experiment and reconstructed them using the trained surrogate network and the VAE. We applied gradient descent with Armijo line search to minimize \(J^{\mathrm{Net}}(z)\), defined in \eqref{eq:numerics-boundary-objective-surrogate}, with \(\Phi\) given by \eqref{eq:numerics-representation-map}. The initial latent vectors were sampled from the standard Gaussian distribution.

Figures~\ref{fig:ellipse-sequence-1} and~\ref{fig:ellipse-sequence-2} show the reconstruction process for two targets, including selected iterates and the final reconstructions after \(100\) gradient descent iterations. For these displayed examples, the reconstructed inclusions closely match the targets, and the visible errors are concentrated near the ellipse boundaries. Unlike the binary analytical parametrization, the learned decoder is not constrained to produce binary pixel values. Its final sigmoid layer and the approximate nature of the learned representation therefore allow intermediate values between \(0\) and \(1\). This spatial transition should be distinguished from the \(C^1\)-dependence of the decoder on the latent variable.

Along the displayed optimization trajectories, the surrogate objective follows the overall trend of the FEM objective. The figures also include the \(L^2(\Omega)\)-error between the target and reconstructed coefficient fields.

\begin{figure}
    \centering
    \includegraphics[width=\linewidth]{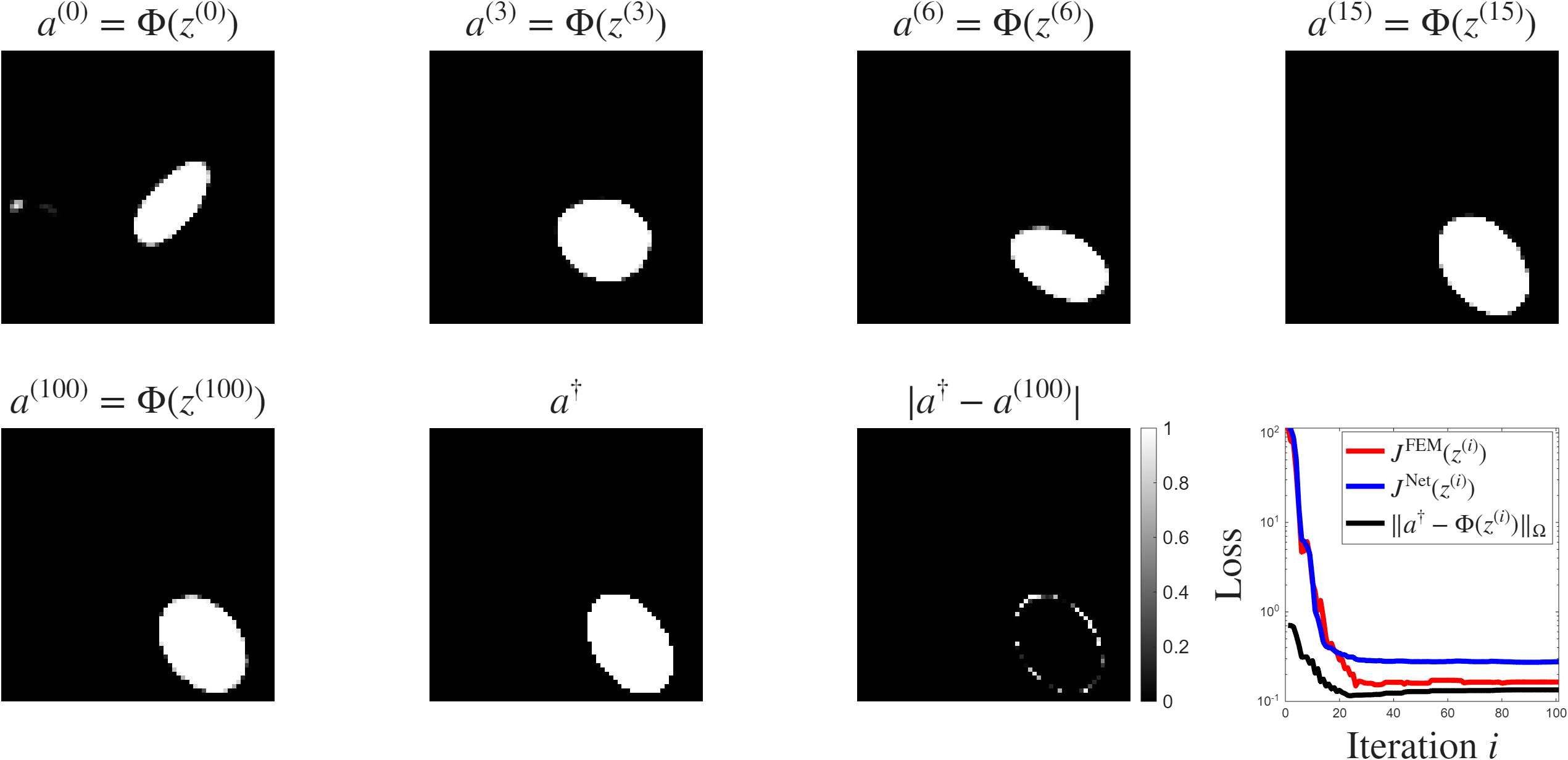}
    \caption{Surrogate-based reconstruction of a target coefficient \(a^\dagger\) using the learned ellipse representation. \emph{Top row:} selected iterates \(a^{(i)}=\Phi(z^{(i)})\). \emph{Bottom row:} final reconstruction \(a^{(100)}\), target \(a^\dagger\), pointwise absolute error \(\lvert a^\dagger-a^{(100)}\rvert\), and histories of \(J^{\mathrm{Net}}(z^{(i)})\), \(J^{\mathrm{FEM}}(z^{(i)})\), and \(\lVert a^\dagger-a^{(i)}\rVert_{L^2(\Omega)}\).}
    \label{fig:ellipse-sequence-1}
\end{figure}

\begin{figure}
    \centering
    \includegraphics[width=\linewidth]{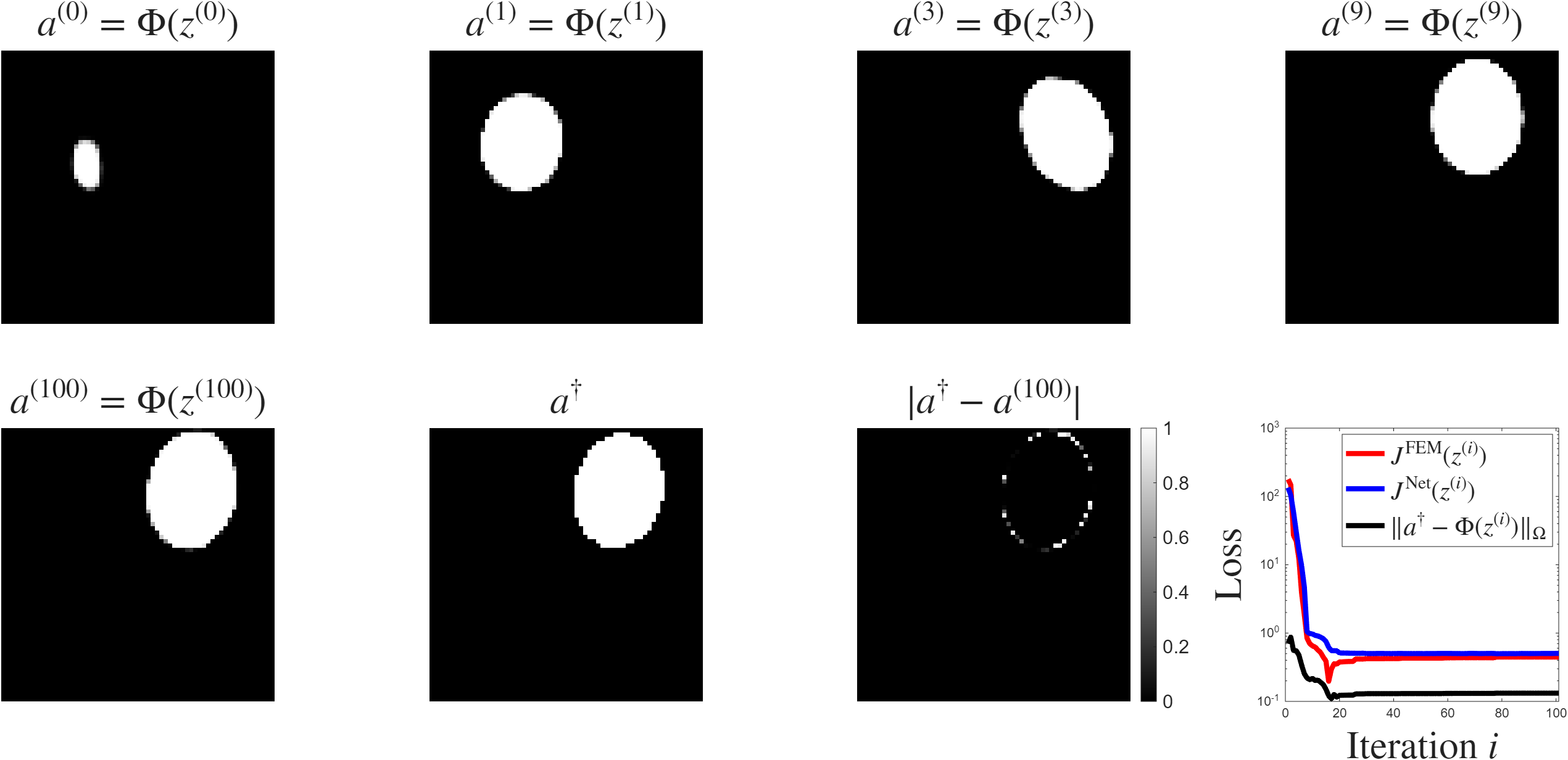}
    \caption{Surrogate-based reconstruction of another target coefficient \(a^\dagger\) using the learned ellipse representation. \emph{Top row:} selected iterates \(a^{(i)}=\Phi(z^{(i)})\). \emph{Bottom row:} final reconstruction \(a^{(100)}\), target \(a^\dagger\), pointwise absolute error \(\lvert a^\dagger-a^{(100)}\rvert\), and histories of \(J^{\mathrm{Net}}(z^{(i)})\), \(J^{\mathrm{FEM}}(z^{(i)})\), and \(\lVert a^\dagger-a^{(i)}\rVert_{L^2(\Omega)}\).}
    \label{fig:ellipse-sequence-2}
\end{figure}

\paragraph{Refined Reconstructions.}
We refine the surrogate-based reconstruction by minimizing \(J^{\mathrm{FEM}}(z)\) in \eqref{eq:numerics-boundary-objective-fem} with a Newton-CG method and Armijo line search, initialized with the recovered latent vector.

Specifically, we first perform \(150\) gradient descent iterations with Armijo line search to minimize \(J^{\mathrm{Net}}(z)\) for a given target coefficient \(a^\dagger\). This yields a latent vector \(z^{(150)}\) and coefficient field \(a^{(150)}=\Phi(z^{(150)})\). Starting from \(z^{(150)}\), we then perform \(50\) Newton-CG iterations with Armijo line search. These iterations remain in the latent space but evaluate the boundary responses with the FEM rather than the surrogate. JAX is used to differentiate through the FEM solver.

Figure~\ref{fig:ellipse-refined-reconstruction} shows the results for four targets. The first column contains the target coefficients, the second contains the final refined reconstructions, and the third contains the loss curves. Note that the targets are generated from the underlying coefficient model, rather than being constructed by sampling latent vectors and decoding them with the trained ellipse VAE.

\begin{figure}
    \centering
    \setlength{\tabcolsep}{2pt} 
    \begin{tabular}{ccc}
    \textbf{Target \(a^\dagger\)} &
    \textbf{\(a^{(200)}=\Phi(z^{(200)})\)} &
    \textbf{Losses}
    \\[0.4em]
    \includegraphics[height=4.5cm]{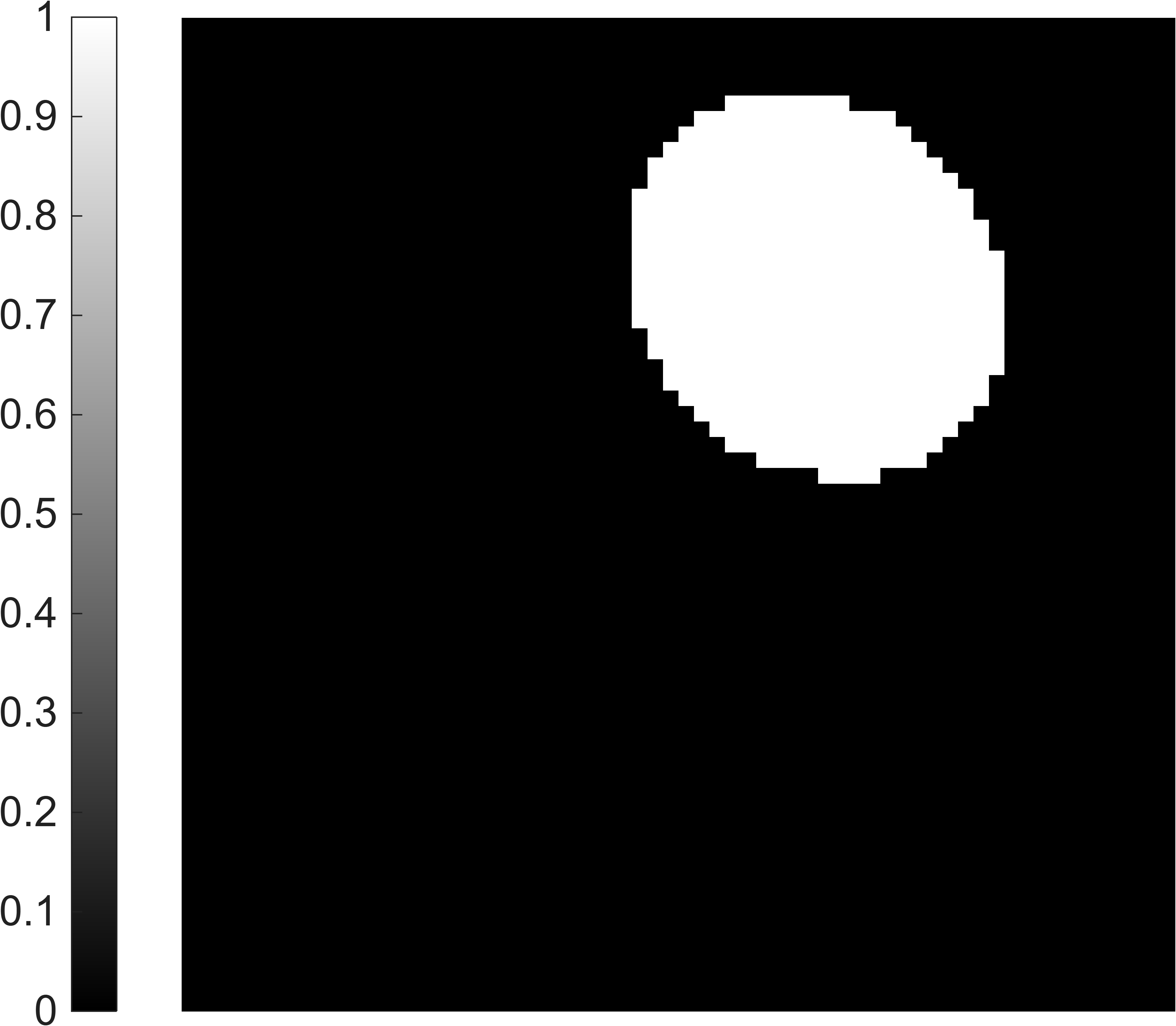} &
    \raisebox{0.07cm}{\includegraphics[height=4.35cm]{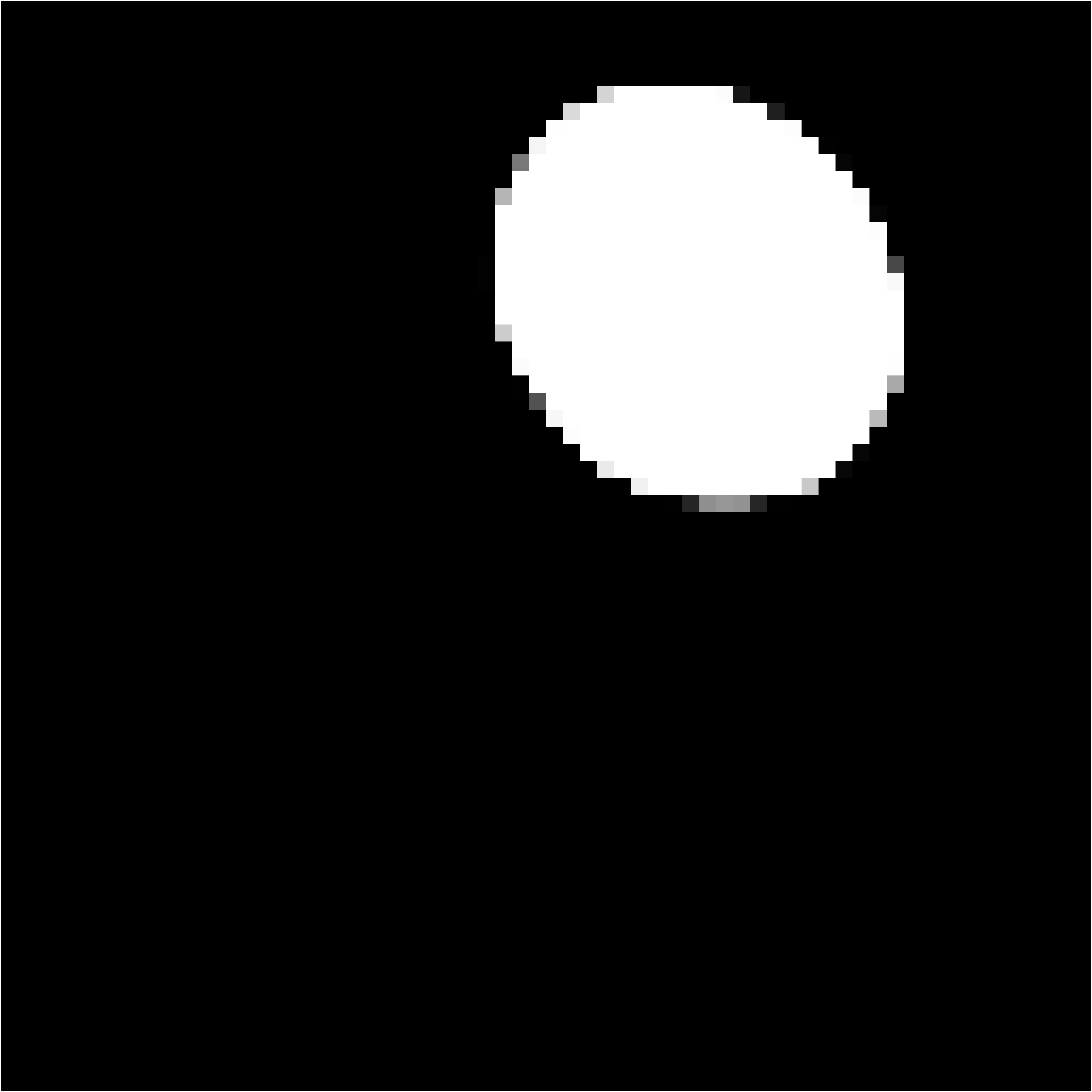}} &
    \includegraphics[height=4.6cm]{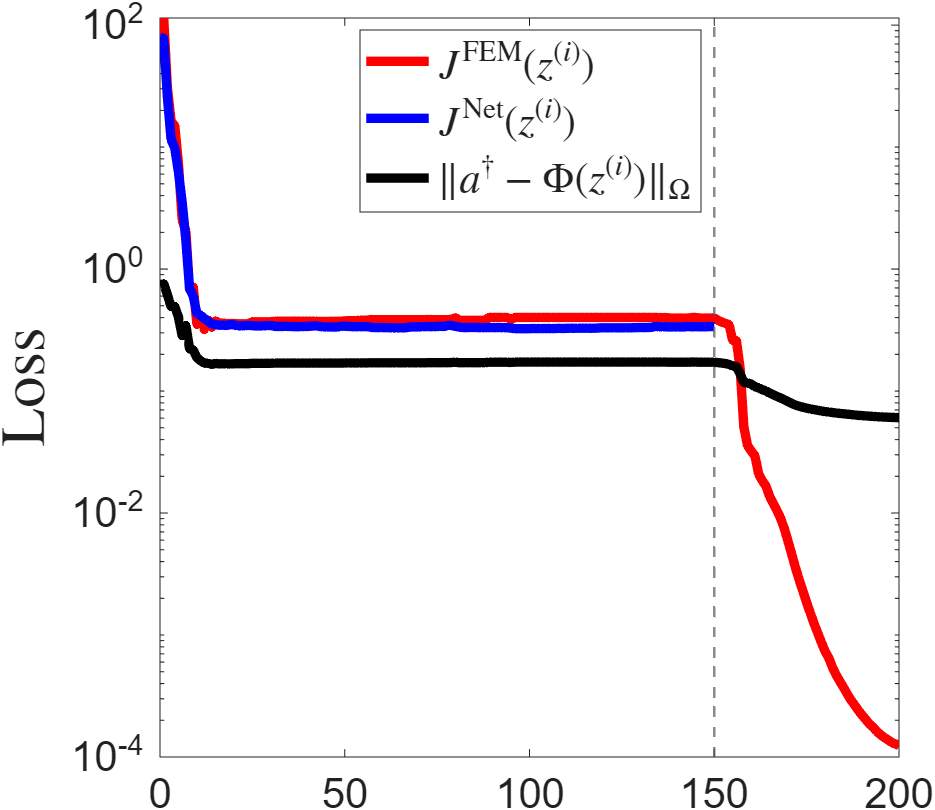}
    \\[0.5em]
    \includegraphics[height=4.5cm]{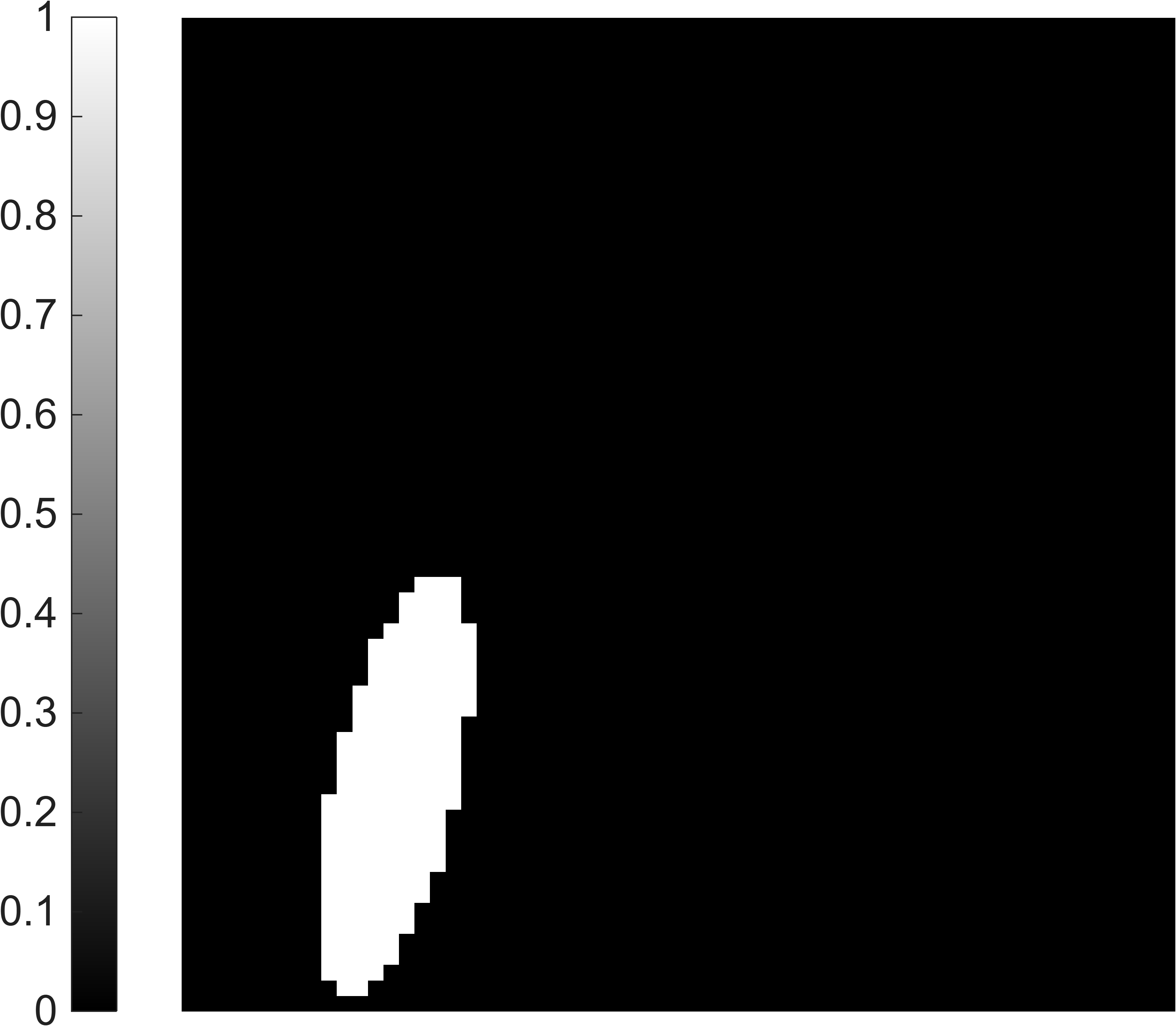} &
    \raisebox{0.07cm}{\includegraphics[height=4.35cm]{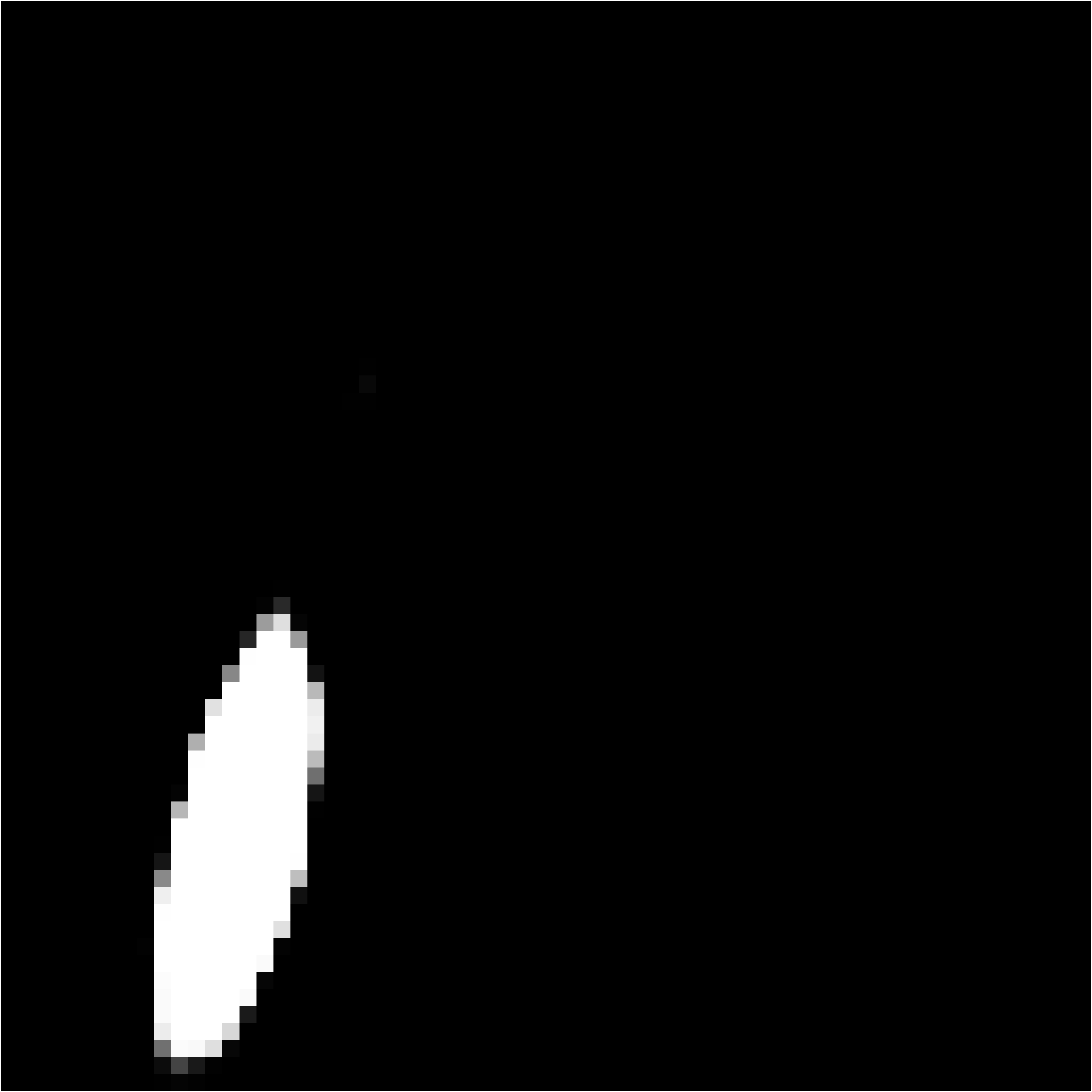}} &
    \includegraphics[height=4.6cm]{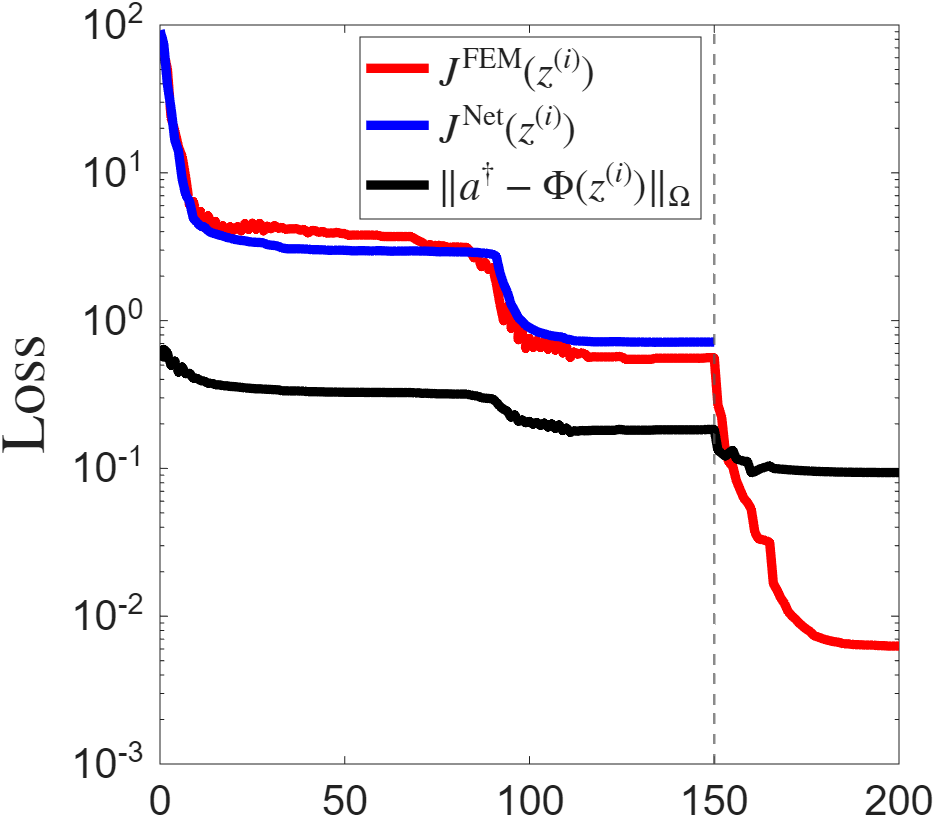}
    \\[0.5em]
    \includegraphics[height=4.5cm]{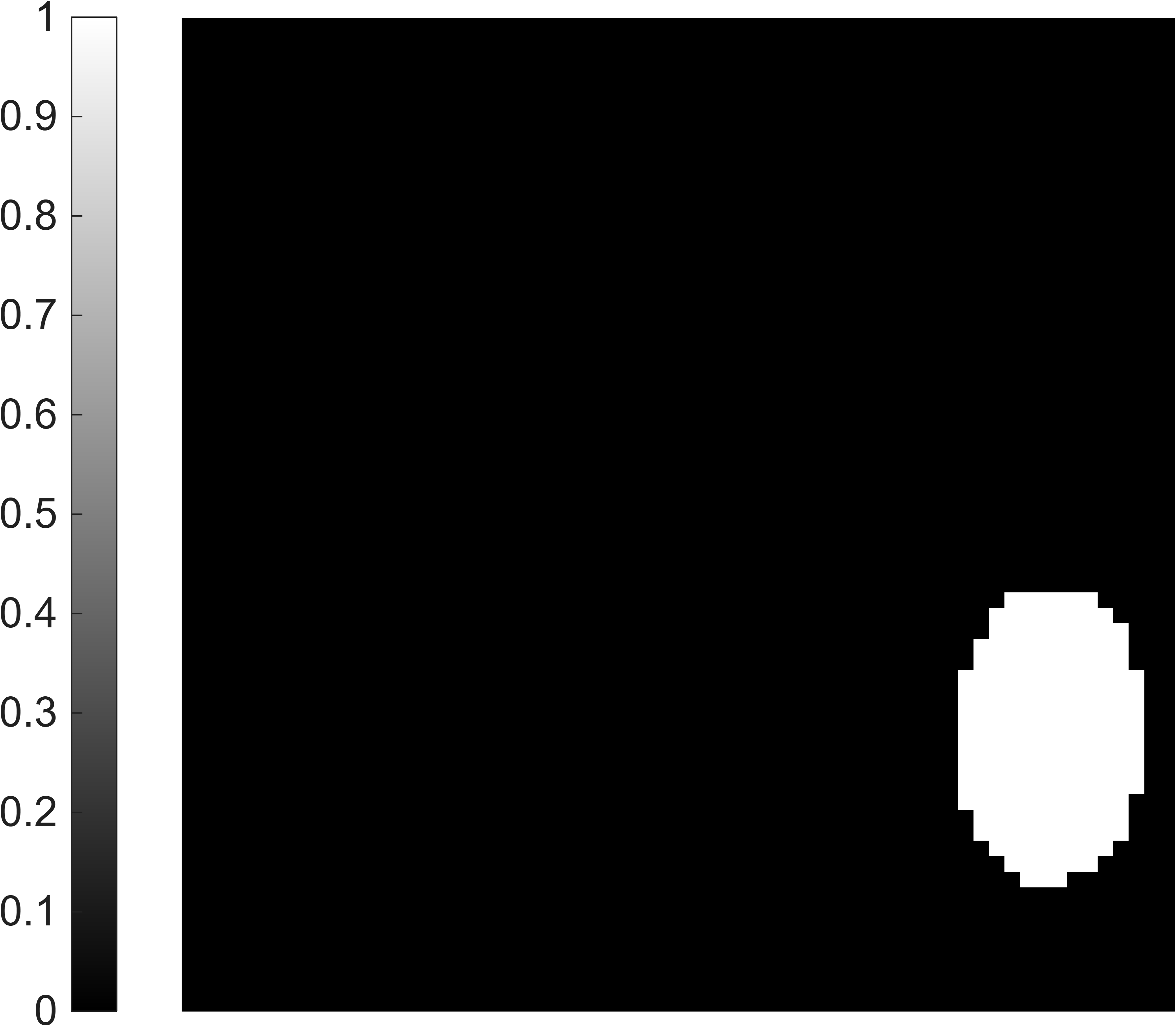} &
    \raisebox{0.07cm}{\includegraphics[height=4.35cm]{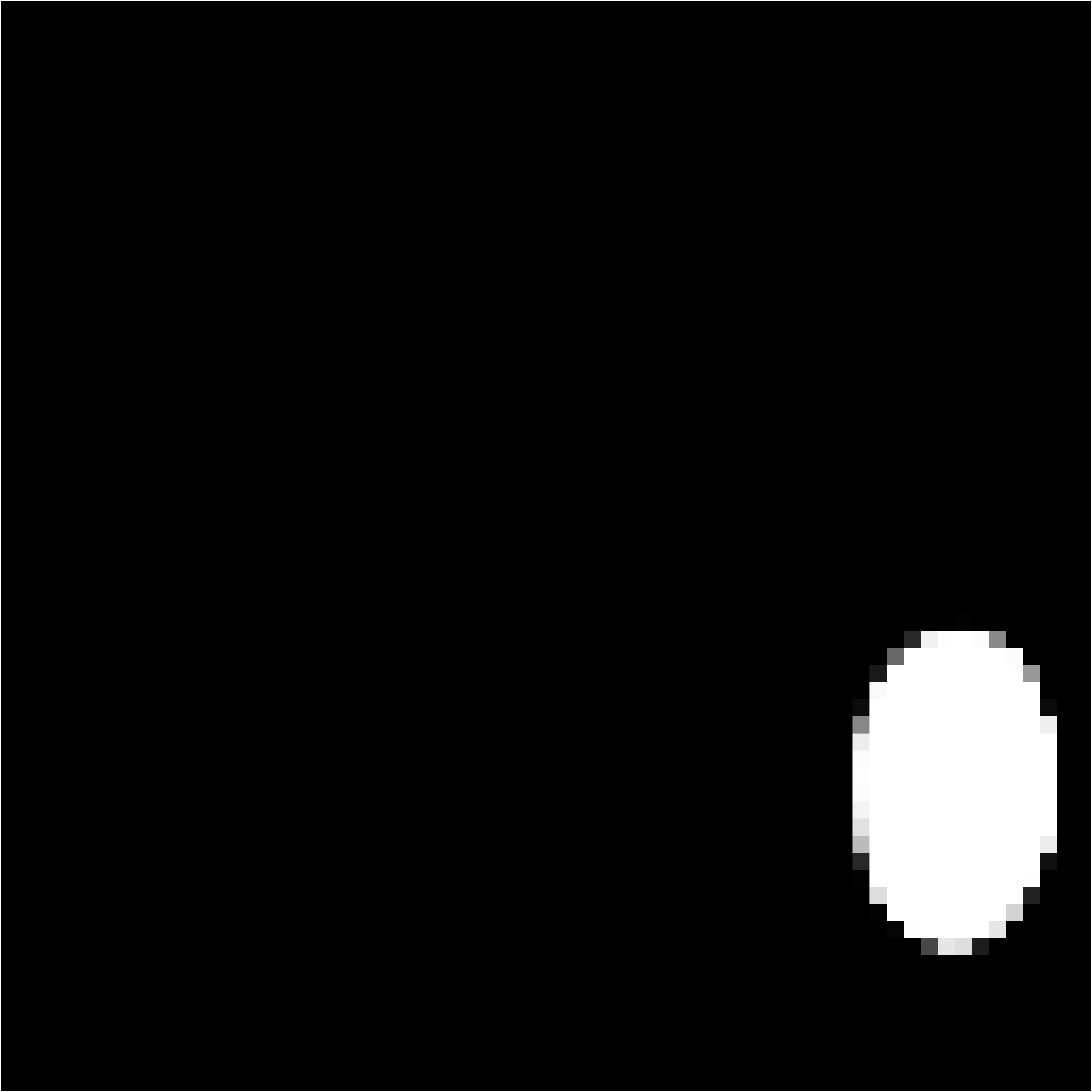}} &
    \includegraphics[height=4.6cm]{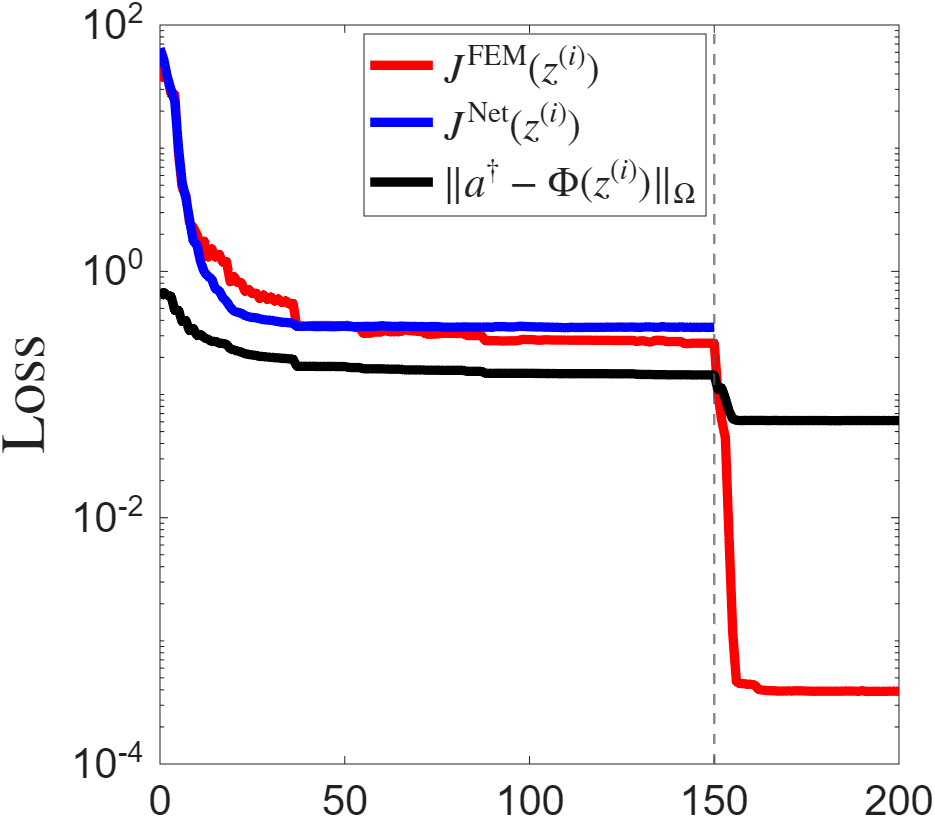}
    \\[0.5em]
    \includegraphics[height=4.5cm]{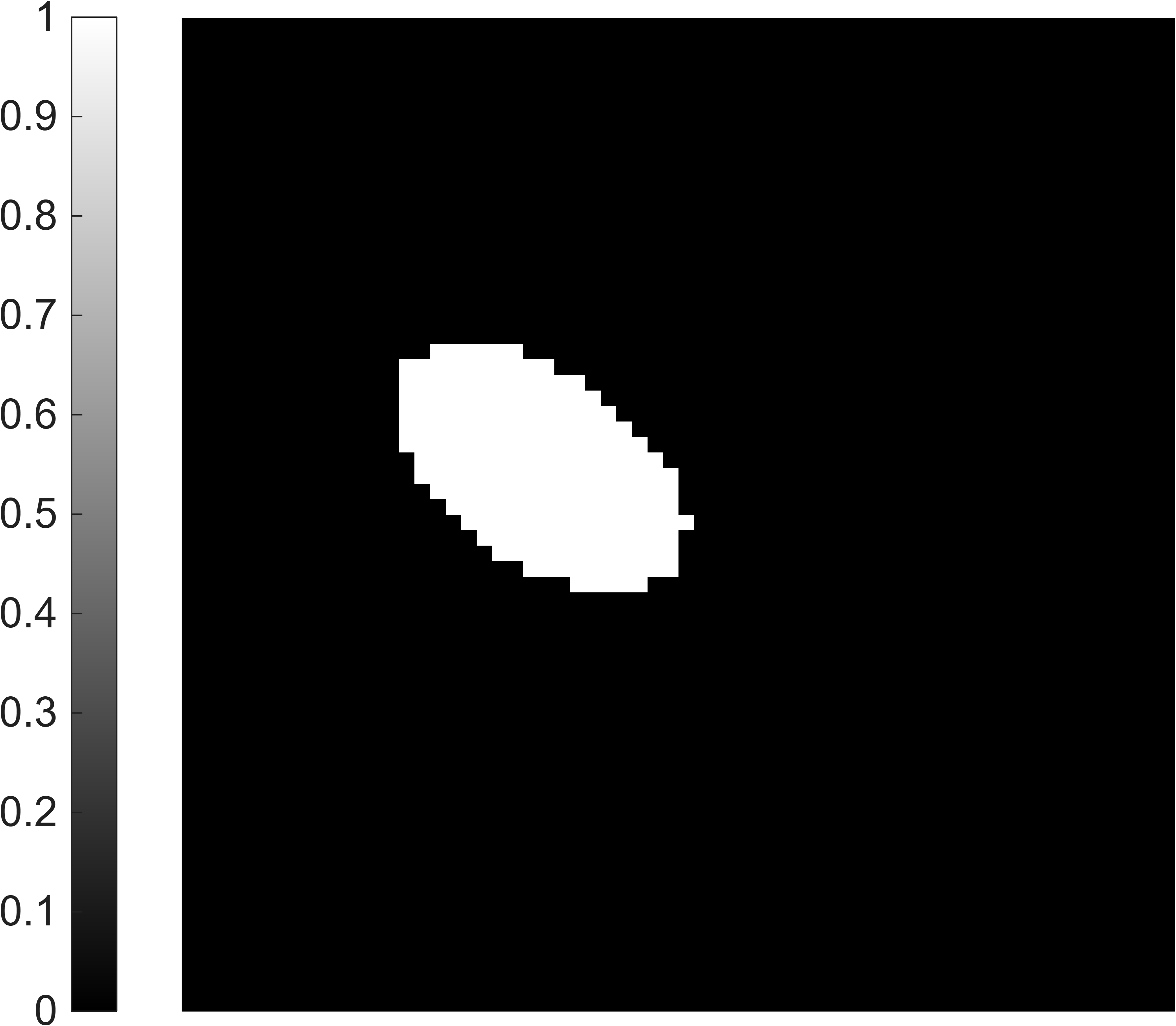} &
    \raisebox{0.07cm}{\includegraphics[height=4.35cm]{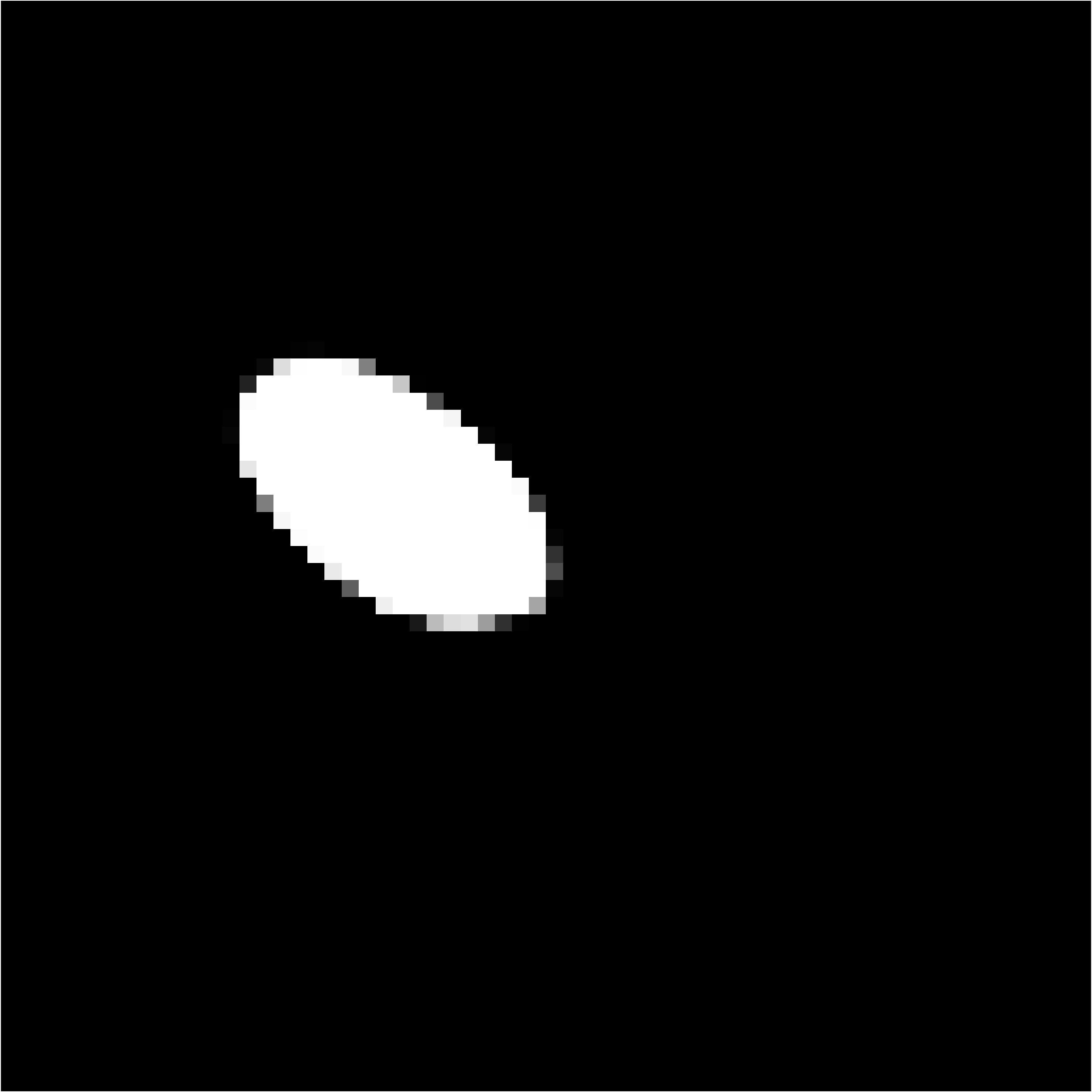}} &
    \includegraphics[height=4.6cm]{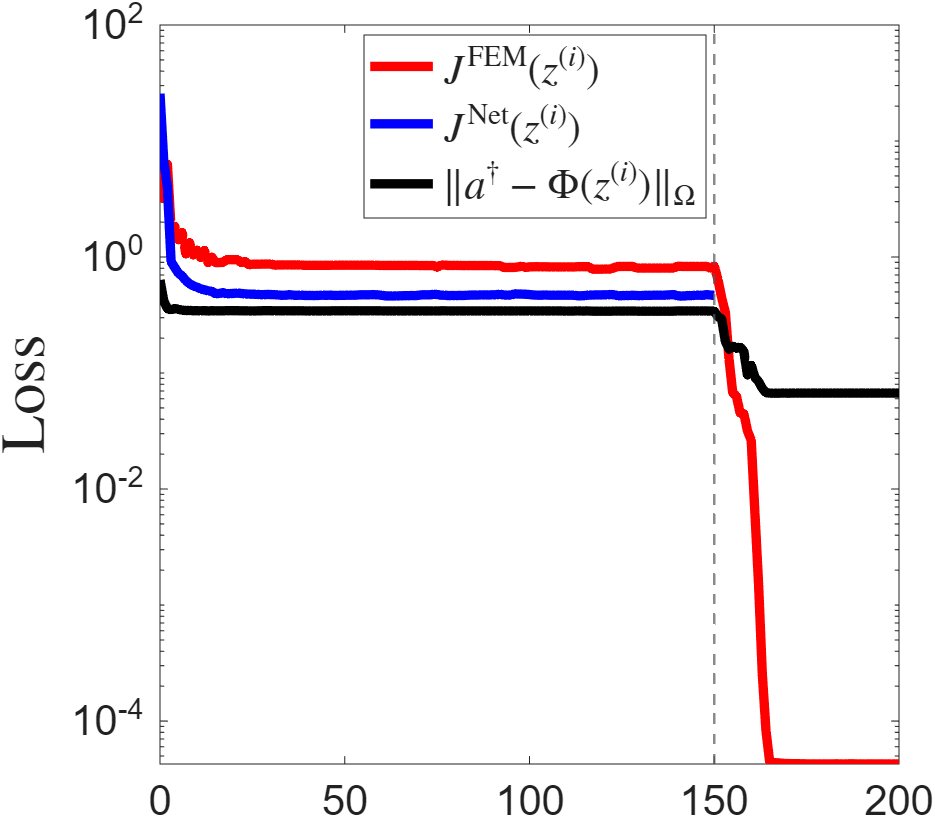}
    \\
    & & \small Iteration \(i\)
    \end{tabular}
    \caption{Refined reconstructions for four target coefficients from the ellipse model. \emph{Columns:} target coefficient \(a^\dagger\), final reconstruction \(a^{(200)}=\Phi(z^{(200)})\), and histories of \(J^{\mathrm{Net}}(z^{(i)})\), \(J^{\mathrm{FEM}}(z^{(i)})\), and \(\lVert a^\dagger-a^{(i)}\rVert_{L^2(\Omega)}\). The vertical dashed line at \(i=150\) marks the transition from surrogate-based gradient descent to FEM-based Newton-CG iterations.}
    \label{fig:ellipse-refined-reconstruction}
\end{figure}

For the displayed targets, the third column of Figure~\ref{fig:ellipse-refined-reconstruction} shows a rapid decrease in \(J^{\mathrm{FEM}}(z)\) after the Newton iterations begin. The corresponding decrease in the \(L^2(\Omega)\)-error shows that the refinement improves both the data fit and the coefficient reconstruction in these examples.

\subsection{Crack-Like Defects in Heterogeneous Backgrounds}
\label{subsec:cracks-with-background-field}
The second coefficient family models localized high-contrast cracks in spatially correlated heterogeneous backgrounds. The background is generated from a stationary
Gaussian random field using a truncated Karhunen--Loève expansion of a
squared-exponential covariance operator. 

\paragraph{Background Field.}
We first define a squared-exponential covariance matrix \(C\in\mathbb R^{p^2\times p^2}\) with entries \(C_{ij,kl}=\exp(-|c_{ij}-c_{kl}|^2/(2\ell^2))\), where the correlation length is \(\ell=1/16\) and \(c_{ij}\) and \(c_{kl}\) are the centers of \(Q_{ij}\) and \(Q_{kl}\), respectively. Let \((\lambda_k,v_k)\) denote the eigenpairs of \(C\), ordered so that \(\lambda_1\geq\lambda_2\geq\cdots\geq0\). We generate a background sample by truncating the Karhunen--Loève expansion after \(100\) terms, followed by rescaling and shifting,
\begin{align}
W
&=0.1+0.1\sum_{k=1}^{100}\sqrt{\lambda_k}\,\xi_k v_k,
\qquad \xi_k \stackrel{\mathrm{i.i.d.}}{\sim}\mathcal{N}(0,1)
\label{eq:numerics-kl-background}
\end{align}
We set \(W_c=\operatorname{clip}(W,0,0.2)\), with clipping applied pixelwise.

\paragraph{Crack Geometry.}
Each crack is represented by a binary mask on the pixel grid. Every mask contains a single crack generated as a random polyline. Starting from a uniformly sampled random point and direction, the crack is grown by successively adding line segments of length \(30\) pixels. At each step, the direction is perturbed by a random angle drawn from \(\mathcal{N}(0,0.25)\).

The segment is stamped onto the mask by setting all pixels within a distance of
\(1.5\) pixels from the segment to \(1\). When the growth process reaches the boundary, it is continued in the opposite direction from the initial point. The process stops when both ends of the polyline have reached the boundary.

Writing \(\chi\) for the resulting binary crack mask, the physical coefficient is
\begin{equation}
a=a_{\min}+\chi+(1-\chi)W_c
\qquad a_{\min}=0.05
\label{eq:numerics-crack-coefficient-field}
\end{equation}
where the operations are pixelwise. Crack pixels replace the background by the
value \(1\) before the positive offset is added; the remaining pixels retain
their background values.

Examples of the resulting coefficient fields are shown in Figure~\ref{fig:numerics-cracks-examples}. Recall that the constant offset \(a_{\min}\) is omitted from the coefficient plots.

\begin{figure}
\centering
\includegraphics[width=0.75\linewidth]{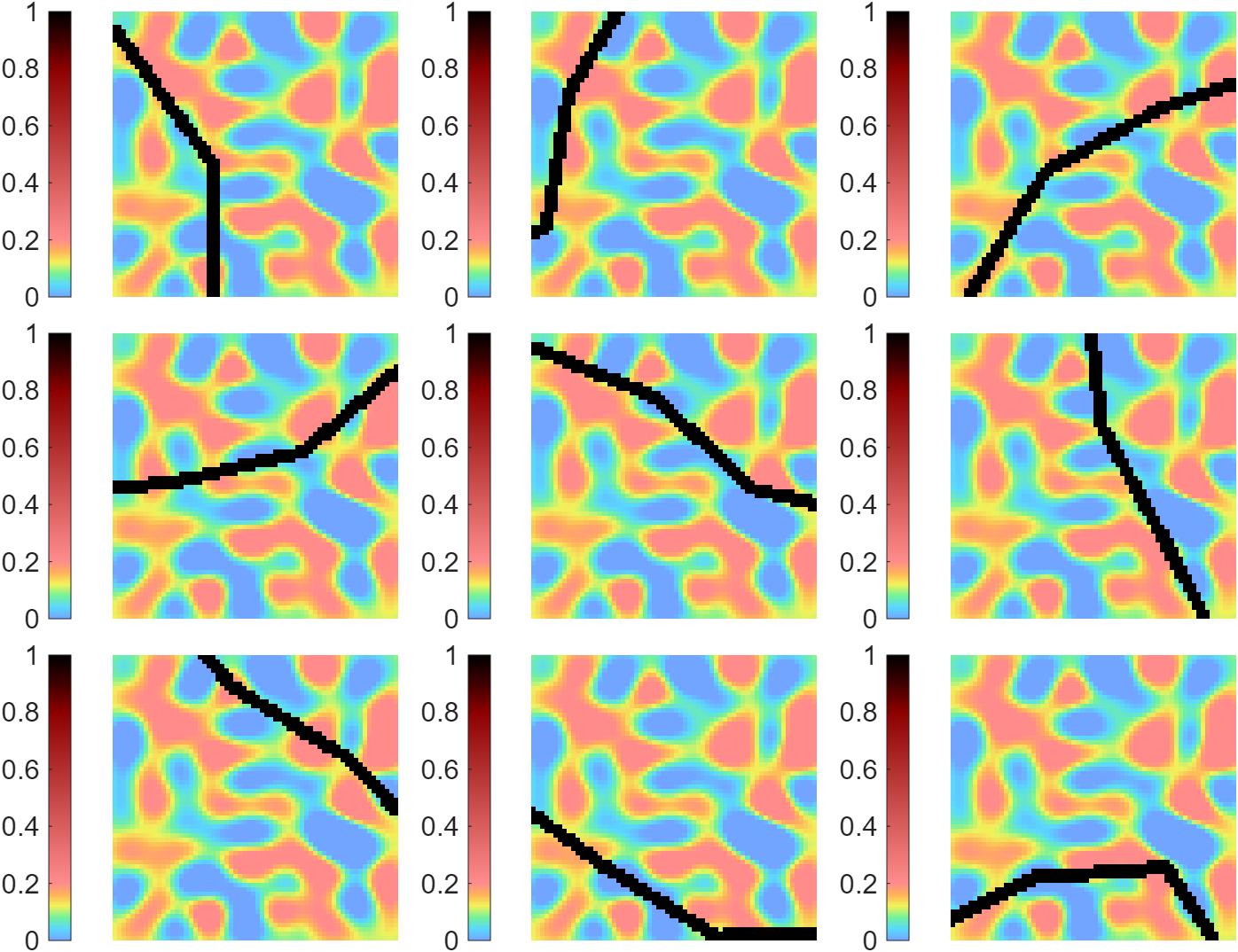}
\caption{Examples of normalized coefficient contrasts \(\widehat a=a-a_{\min}\) from the crack family. Cracks are shown in black, while the colors encode variation in the heterogeneous background.}
\label{fig:numerics-cracks-examples}
\end{figure}

\subsubsection{VAE Parametrization}
For this more complex coefficient family, the representation map is learned using a variational autoencoder. We use the same architecture as in the previous experiment, except that the ELU activation functions are replaced by ReLU activations, which provided better performance in this setting. The latent dimension is increased to \(m=8\). We use \(L=10\) boundary excitations with the Neumann patterns defined by \eqref{eq:numerics-neumann-modes}.

\paragraph{Training Details (VAE).}
The VAE was trained for \(10\,000\) epochs using SGD with a learning rate of \(0.5\) and a batch size of \(128\). We trained it on \(9\,000\) samples, with an additional \(1\,000\) samples reserved for validation. To improve training stability, we used KL annealing, gradually increasing the weight \(\beta\) from \(10^{-8}\) to \(2\times10^{-3}\) in the variational objective \eqref{eq:numerics-vae-loss}. The edge-aware weight \(\lambda^{\mathrm{edge}}\) in \eqref{eq:numerics-vae-loss} was fixed at \(0.1\) throughout training. Figure~\ref{fig:VAE_cracks} shows the annealing schedule together with the training and validation loss curves.

\begin{figure}
    \centering
    \includegraphics[width=0.32\textwidth]{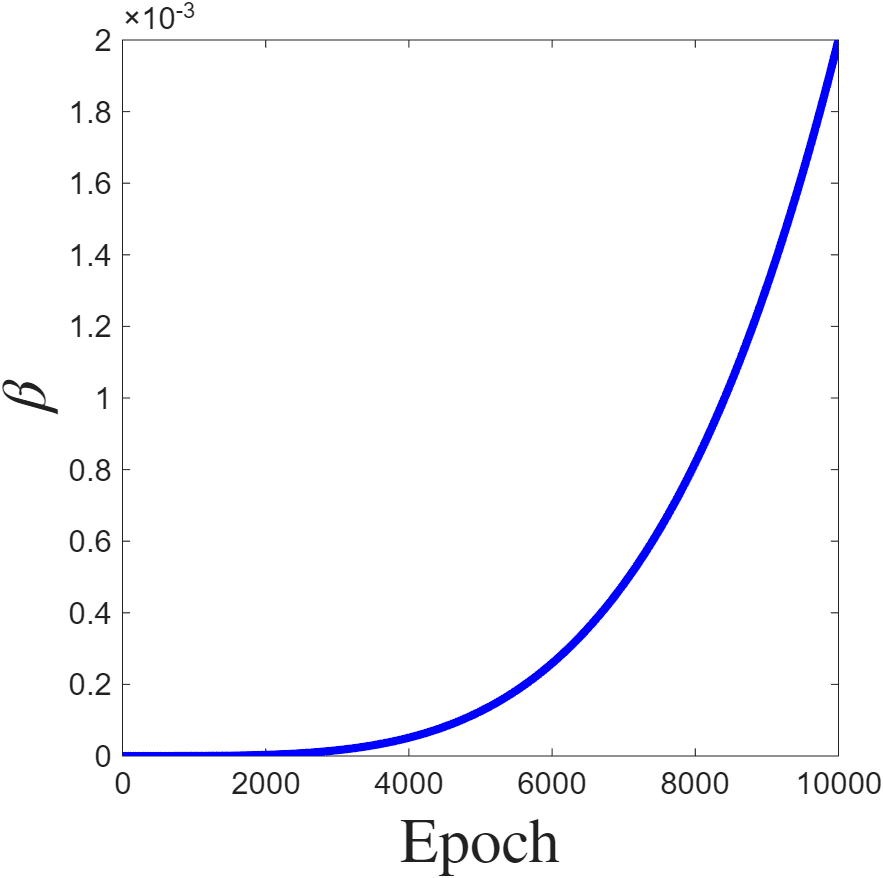}\hfill
    \includegraphics[width=0.32\textwidth]{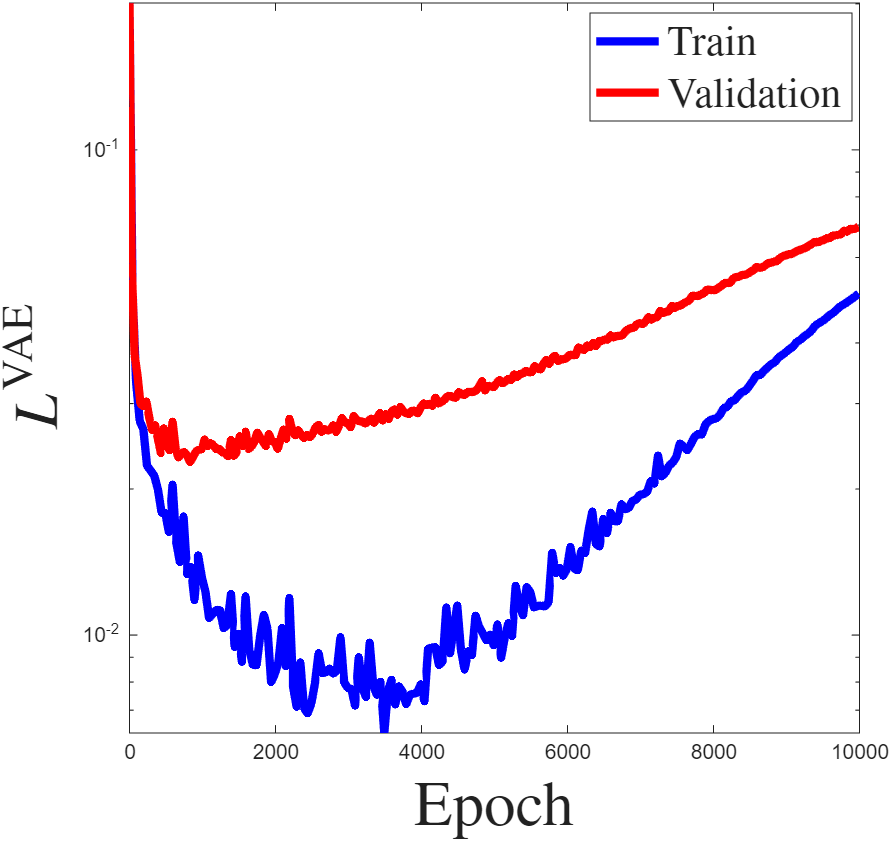}\hfill
    \includegraphics[width=0.32\textwidth]{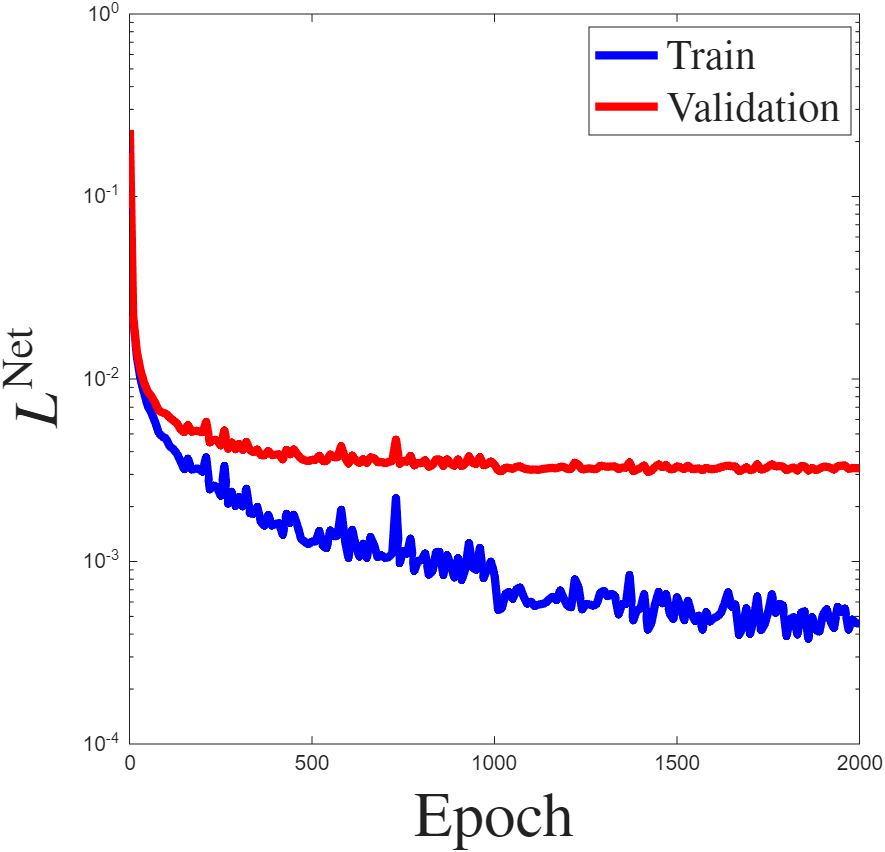}
    \caption{Training of the crack representation and surrogate.
    \emph{Left:} KL annealing schedule for the weight \(\beta\).
    \emph{Middle:} training and validation losses of the VAE.
    \emph{Right:} training and validation losses of the surrogate network.}
    \label{fig:VAE_cracks}
    \label{fig:Network-cracks-loss}
\end{figure}
Figure~\ref{fig:numerics-cracks-samples} presents coefficient fields obtained by sampling latent vectors from the standard Gaussian prior and decoding them with the VAE. The displayed samples reproduce characteristic geometric and background features of the training data.
\begin{figure}
    \centering
    \includegraphics[width=0.75\linewidth]{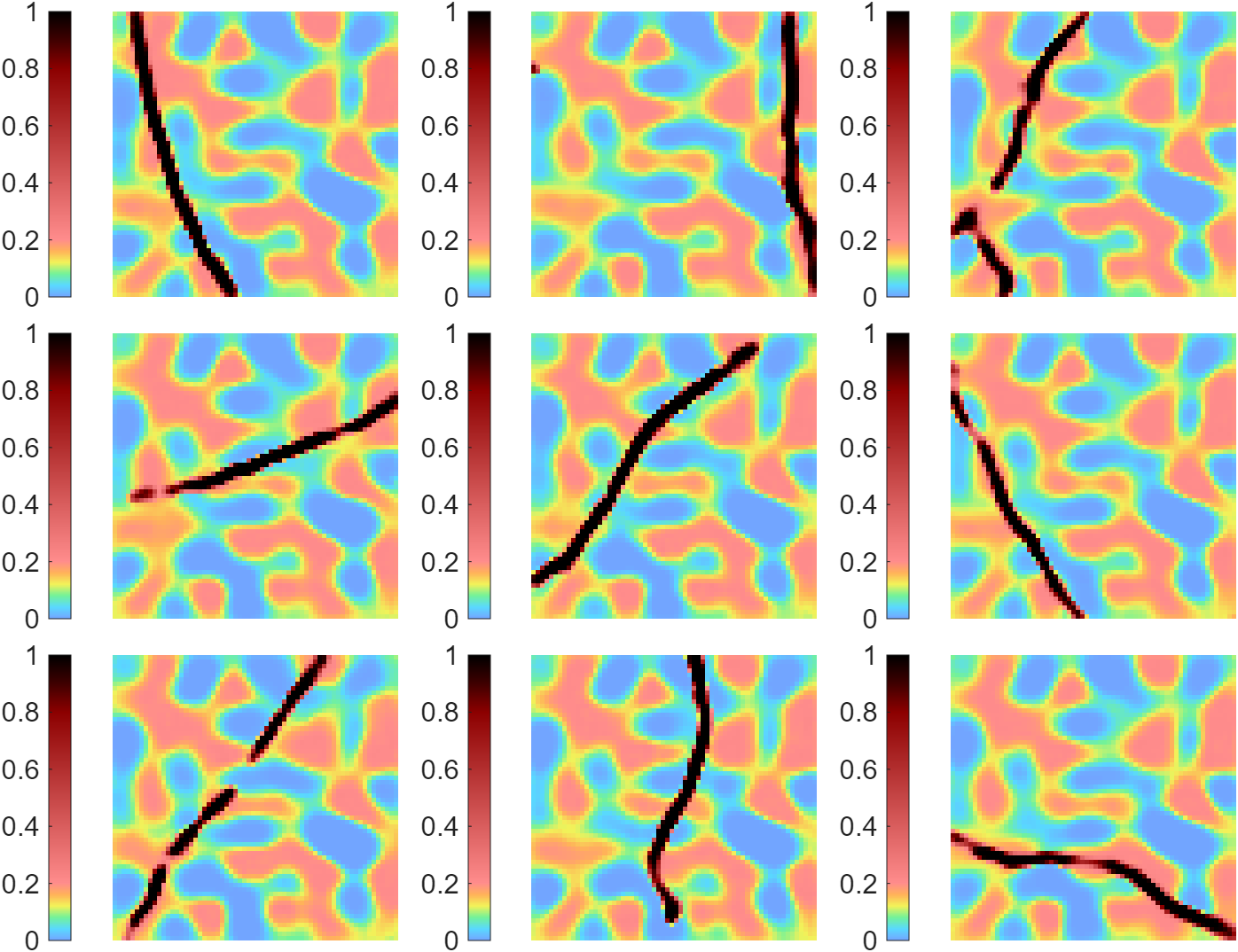}
    \caption{Examples of normalized coefficient contrasts \(\widehat a=a-a_{\min}\) generated by sampling latent vectors from the standard Gaussian prior and decoding them with the trained crack VAE.}
    \label{fig:numerics-cracks-samples}
\end{figure}
Using the latent representation learned by the VAE, we next construct a surrogate network that predicts the boundary responses directly from the latent vectors.

\paragraph{Training Details (Surrogate Network).}
Using \(4\,500\) data pairs for training and \(500\) for validation, we trained the network for \(2\,000\) epochs with batches of size \(128\) and the Adam optimizer. By \eqref{eq:numerics-overparametrization}, the network has \(M=18\,000\) hidden units. The initial learning rate was \(10^{-3}\) and was reduced by a factor of \(0.75\) after \(1\,000\) epochs. We used the ReLU activation function in the surrogate network for this experiment. The loss curves are shown in Figure~\ref{fig:Network-cracks-loss}.

\paragraph{Reconstruction.}
Two target coefficients were drawn from the underlying crack model and reconstructed by minimizing the surrogate objective \(J^{\mathrm{Net}}(z)\) in \eqref{eq:numerics-boundary-objective-surrogate} using gradient descent with Armijo line search. The initial latent vectors were sampled from the standard Gaussian prior. The optimization is performed in the learned latent space \(\IR^8\), and the surrogate maps the latent vectors to the boundary responses.

The reconstruction processes for the two targets are shown in Figures~\ref{fig:cracks-sequence-1} and~\ref{fig:cracks-sequence-2}. Each figure displays the initial guess, selected iterates, and the final reconstruction after \(100\) iterations.

For the displayed targets, the VAE-based representation and surrogate recover the dominant crack geometry and the main features of the background field. The visible reconstruction errors are concentrated near the crack boundaries and in parts of the background. The softened transitions near the cracks are empirical features of the learned reconstruction, rather than a necessary consequence of continuity of the decoder with respect to the latent variable.

Along these optimization trajectories, the surrogate objective follows the overall trend of the FEM objective. We also show the evolution of the \(L^2(\Omega)\)-error between the target and reconstructed coefficient fields.

\begin{figure}
    \centering
    \includegraphics[width=\linewidth]{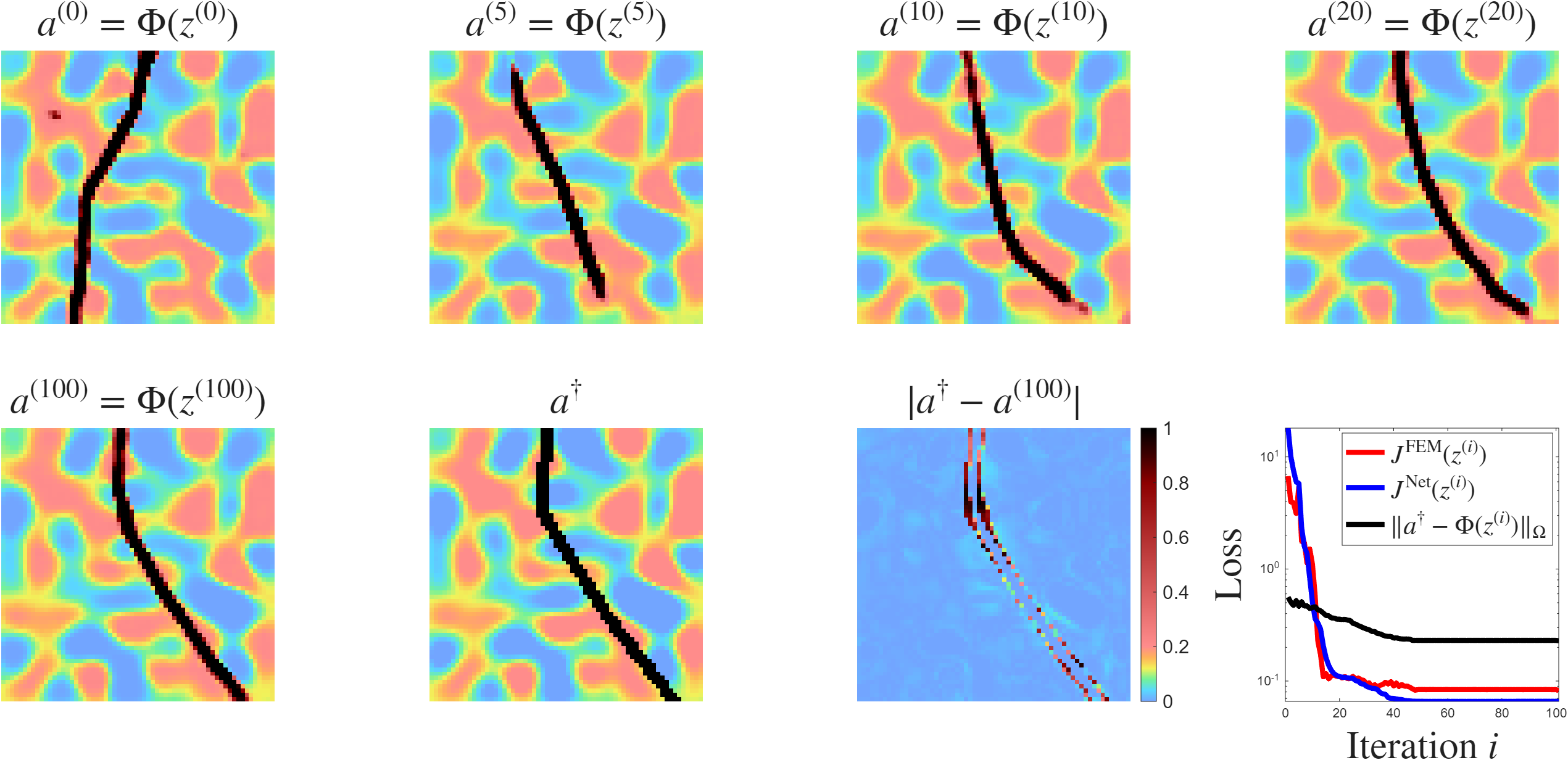}
    \caption{Surrogate-based reconstruction of a target coefficient \(a^\dagger\) from the crack family. \emph{Top row:} selected iterates \(a^{(i)}=\Phi(z^{(i)})\). \emph{Bottom row:} final reconstruction \(a^{(100)}\), target \(a^\dagger\), pointwise absolute error \(\lvert a^\dagger-a^{(100)}\rvert\), and histories of \(J^{\mathrm{Net}}(z^{(i)})\), \(J^{\mathrm{FEM}}(z^{(i)})\), and \(\lVert a^\dagger-a^{(i)}\rVert_{L^2(\Omega)}\).}
    \label{fig:cracks-sequence-1}
\end{figure}

\begin{figure}
    \centering
    \includegraphics[width=\linewidth]{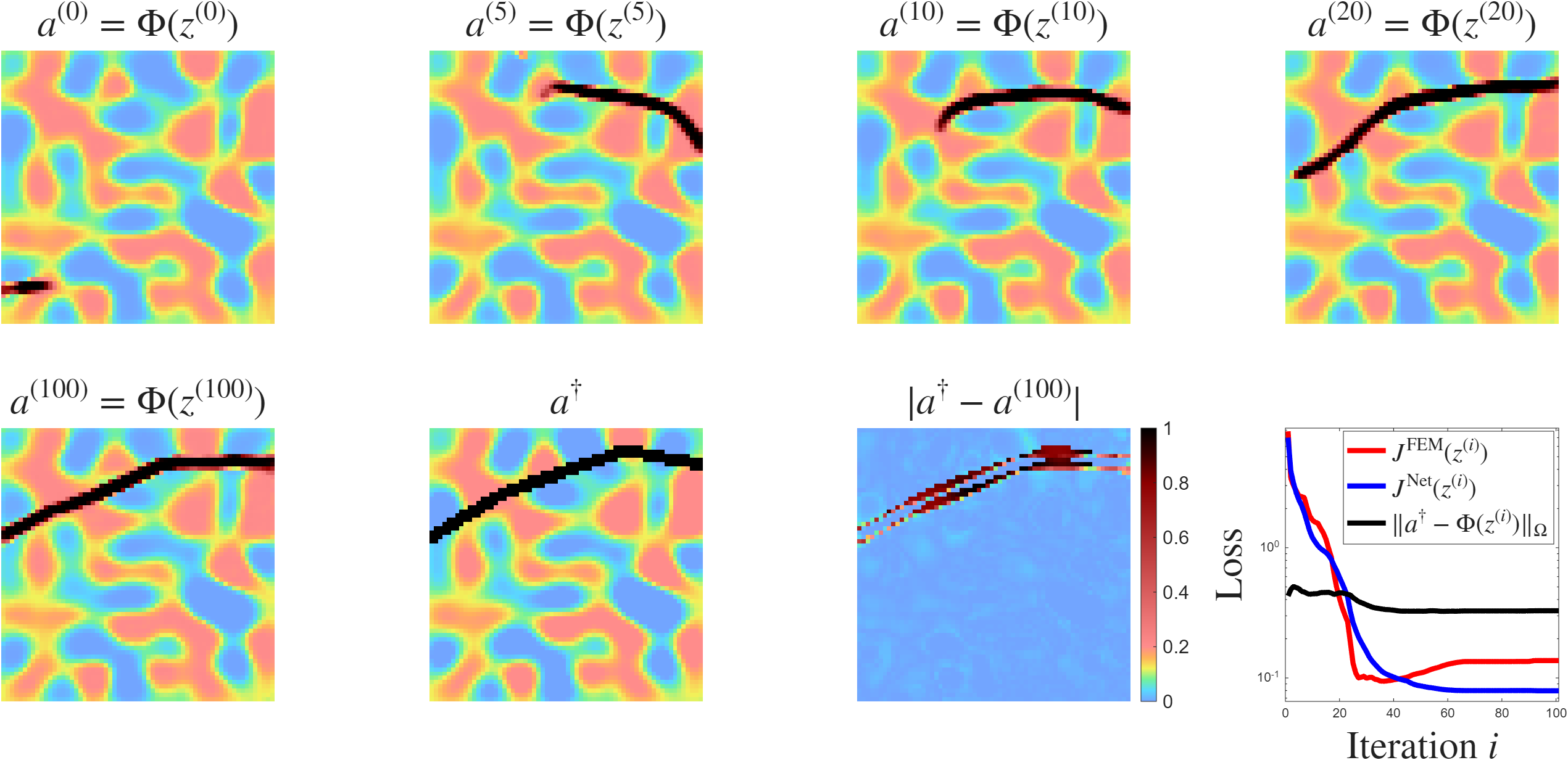}
    \caption{Surrogate-based reconstruction of another target coefficient \(a^\dagger\) from the crack family. \emph{Top row:} selected iterates \(a^{(i)}=\Phi(z^{(i)})\). \emph{Bottom row:} final reconstruction \(a^{(100)}\), target \(a^\dagger\), pointwise absolute error \(\lvert a^\dagger-a^{(100)}\rvert\), and histories of \(J^{\mathrm{Net}}(z^{(i)})\), \(J^{\mathrm{FEM}}(z^{(i)})\), and \(\lVert a^\dagger-a^{(i)}\rVert_{L^2(\Omega)}\).}
    \label{fig:cracks-sequence-2}
\end{figure}

\paragraph{Refined Reconstructions.}
We further refine the reconstruction by using the recovered latent vector as the initial guess for minimizing the FEM objective \(J^{\mathrm{FEM}}(z)\). Starting from the latent vector obtained after \(150\) iterations applied to \(J^{\mathrm{Net}}(z)\), we perform an additional \(50\) Newton-CG iterations with Armijo line search using the FEM model. Figure~\ref{fig:cracks-refined-reconstruction} shows the results for four targets. The columns contain the target coefficients, the refined reconstructions, and the corresponding loss curves, respectively.

    \begin{figure}
        \centering
        \setlength{\tabcolsep}{2pt} 
        \begin{tabular}{ccc}
        \textbf{Target \(a^\dagger\)} &
        \textbf{\(a^{(200)}=\Phi(z^{(200)})\)} &
        \textbf{Losses}
        \\[0.4em]
        \includegraphics[height=4.5cm]{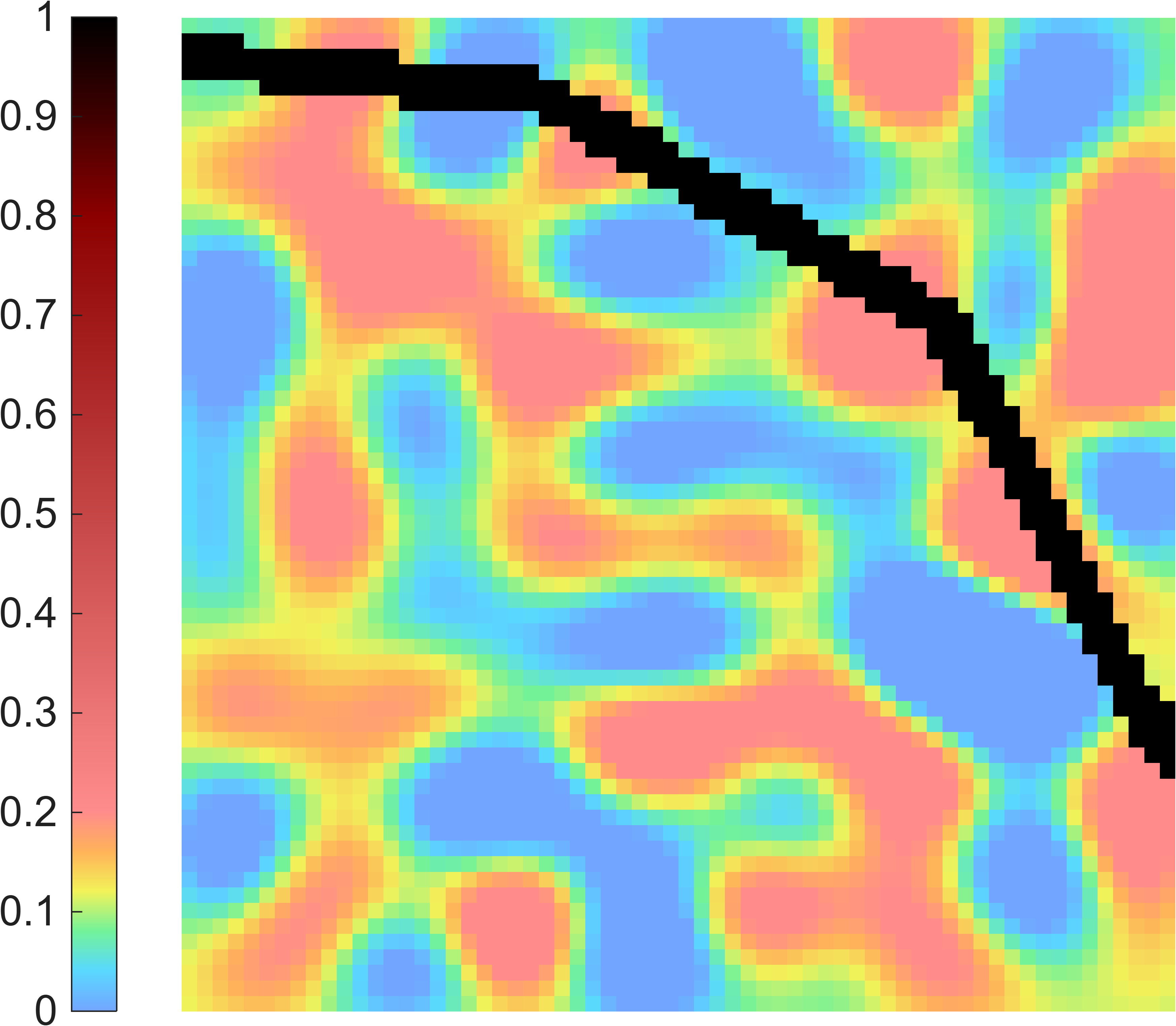} &
        \raisebox{0.07cm}{\includegraphics[height=4.35cm]{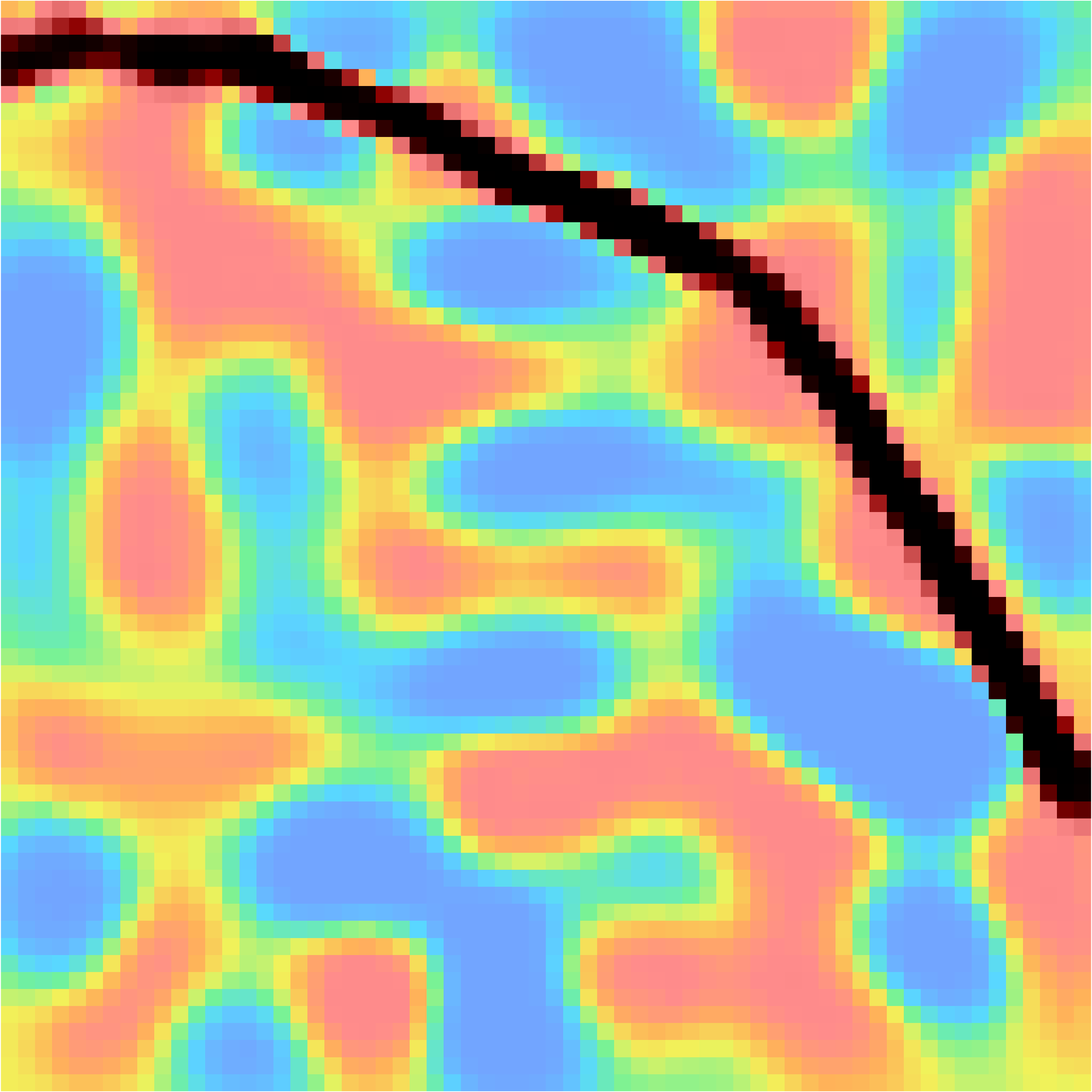}} &
        \includegraphics[height=4.6cm]{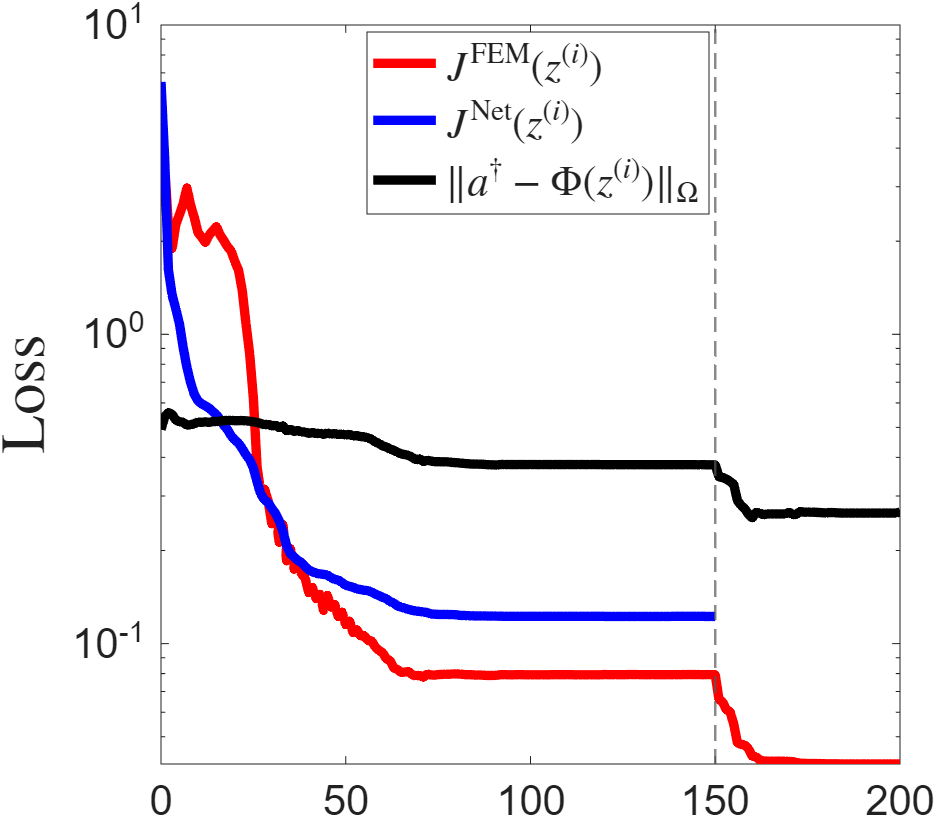}
        \\[0.5em]
        \includegraphics[height=4.5cm]{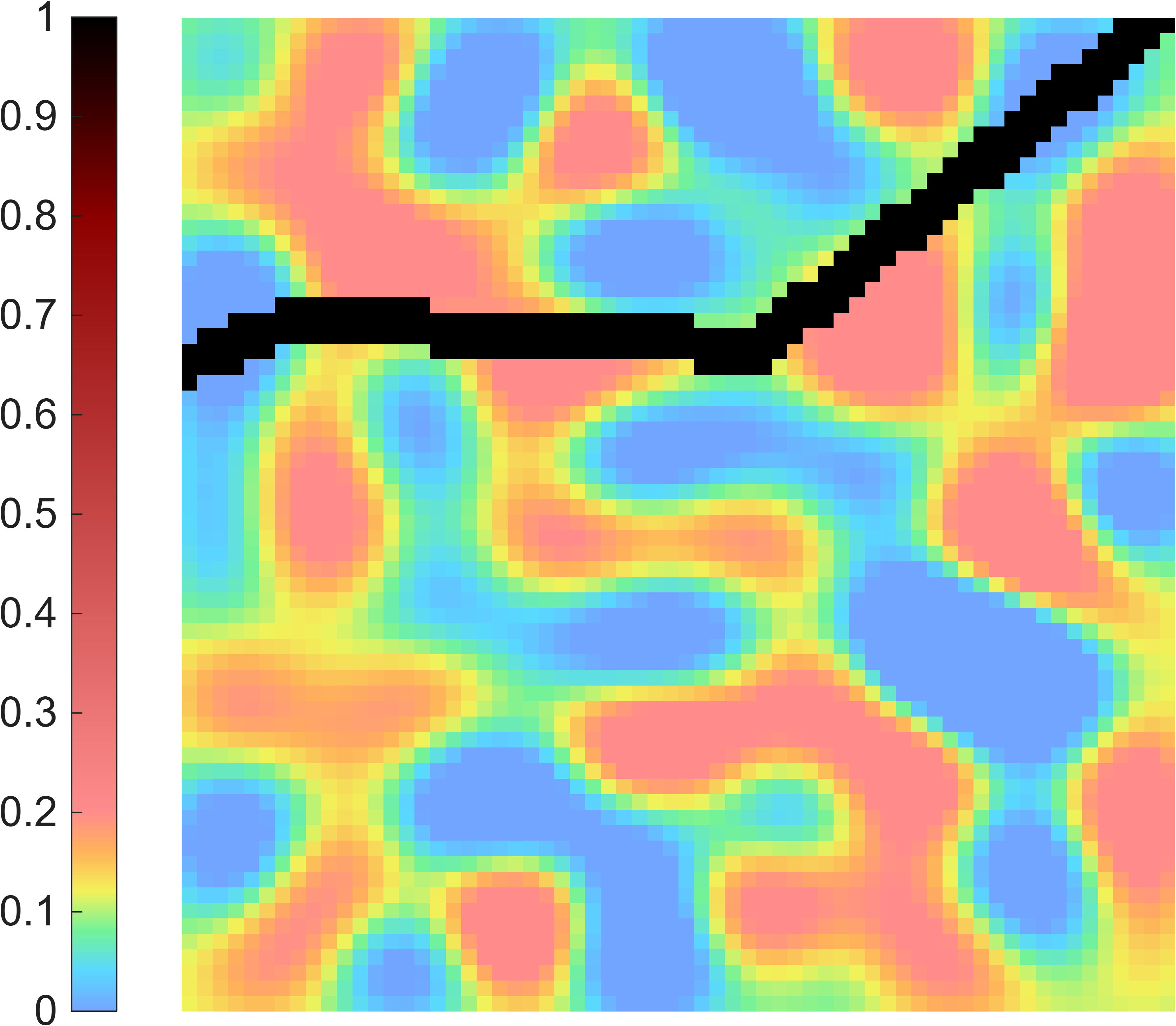} &
        \raisebox{0.07cm}{\includegraphics[height=4.35cm]{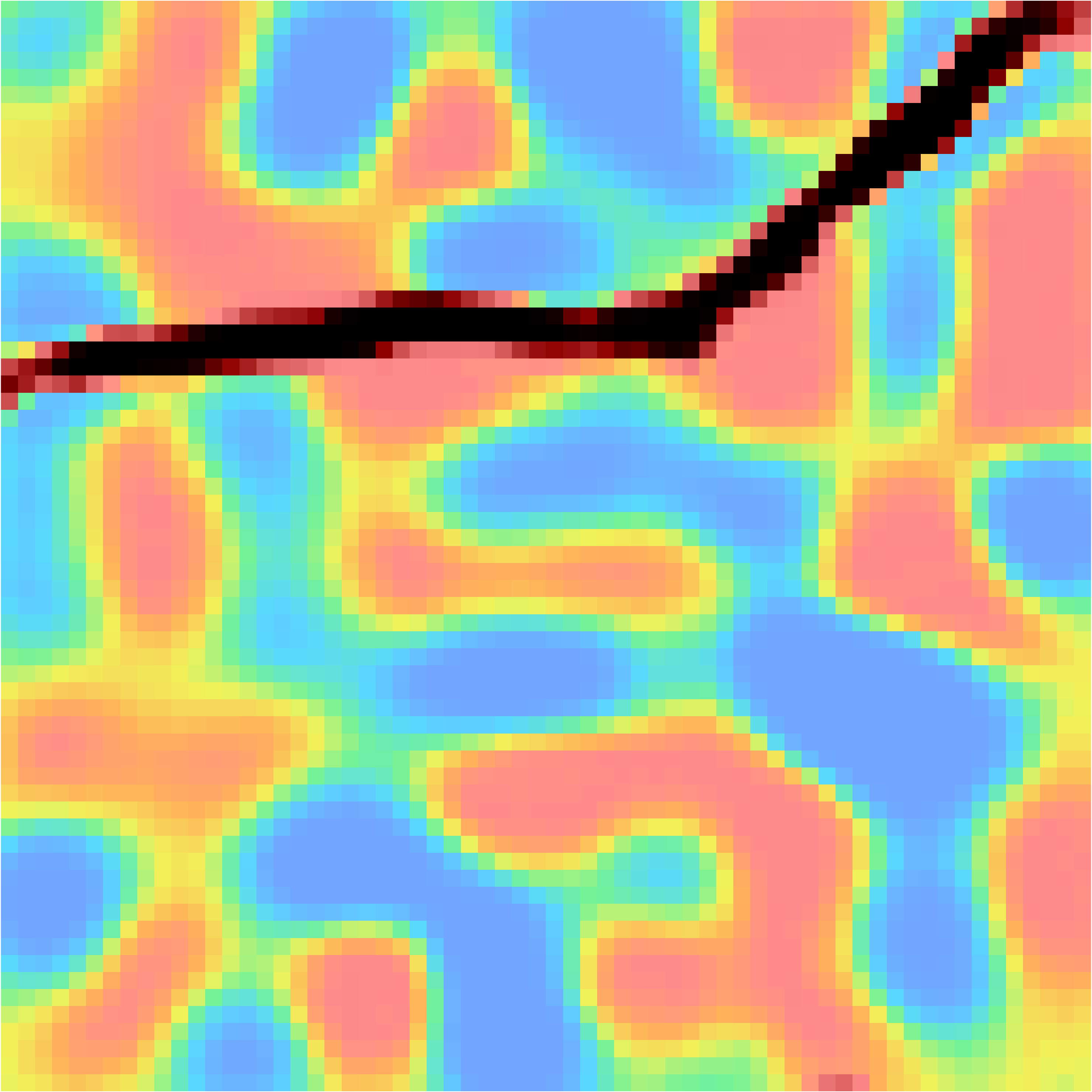}} &
        \includegraphics[height=4.6cm]{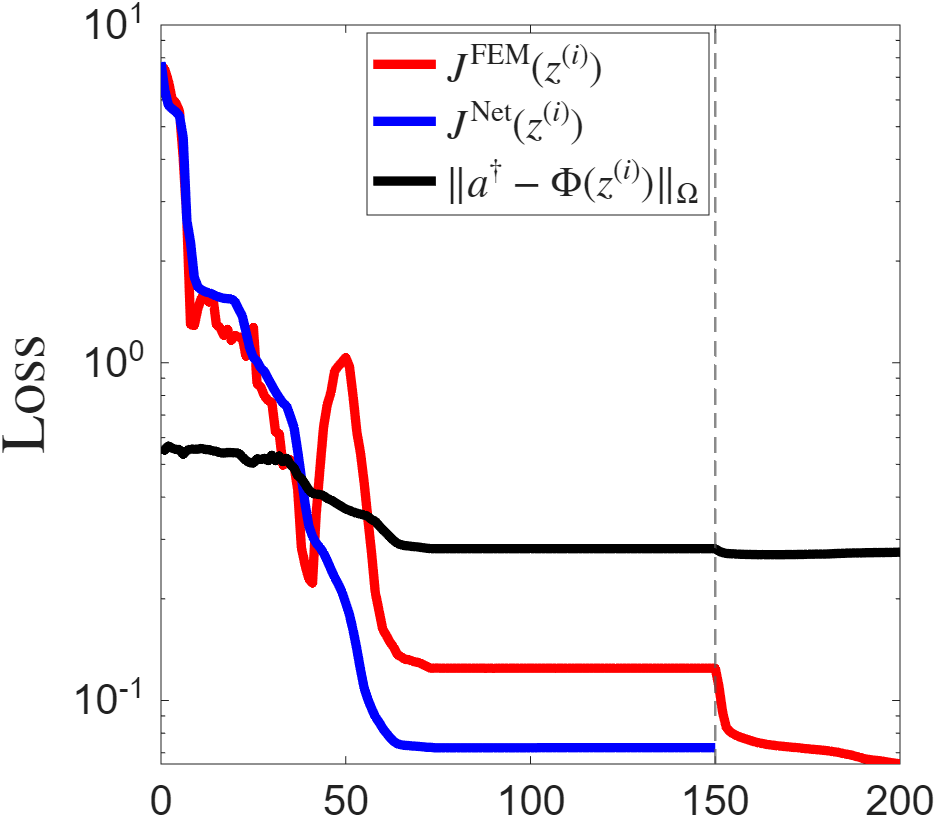}
        \\[0.5em]
        \includegraphics[height=4.5cm]{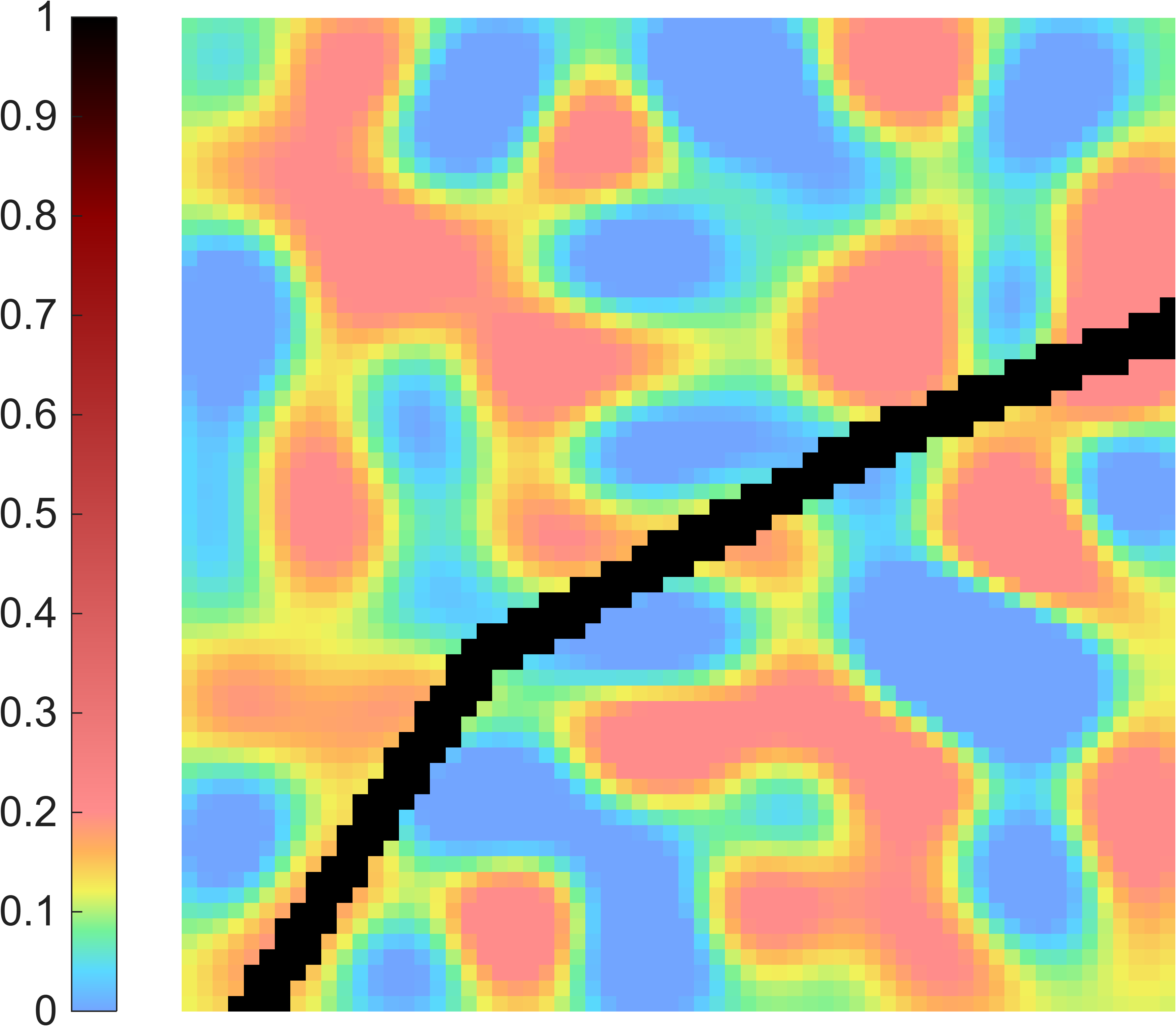} &
        \raisebox{0.07cm}{\includegraphics[height=4.35cm]{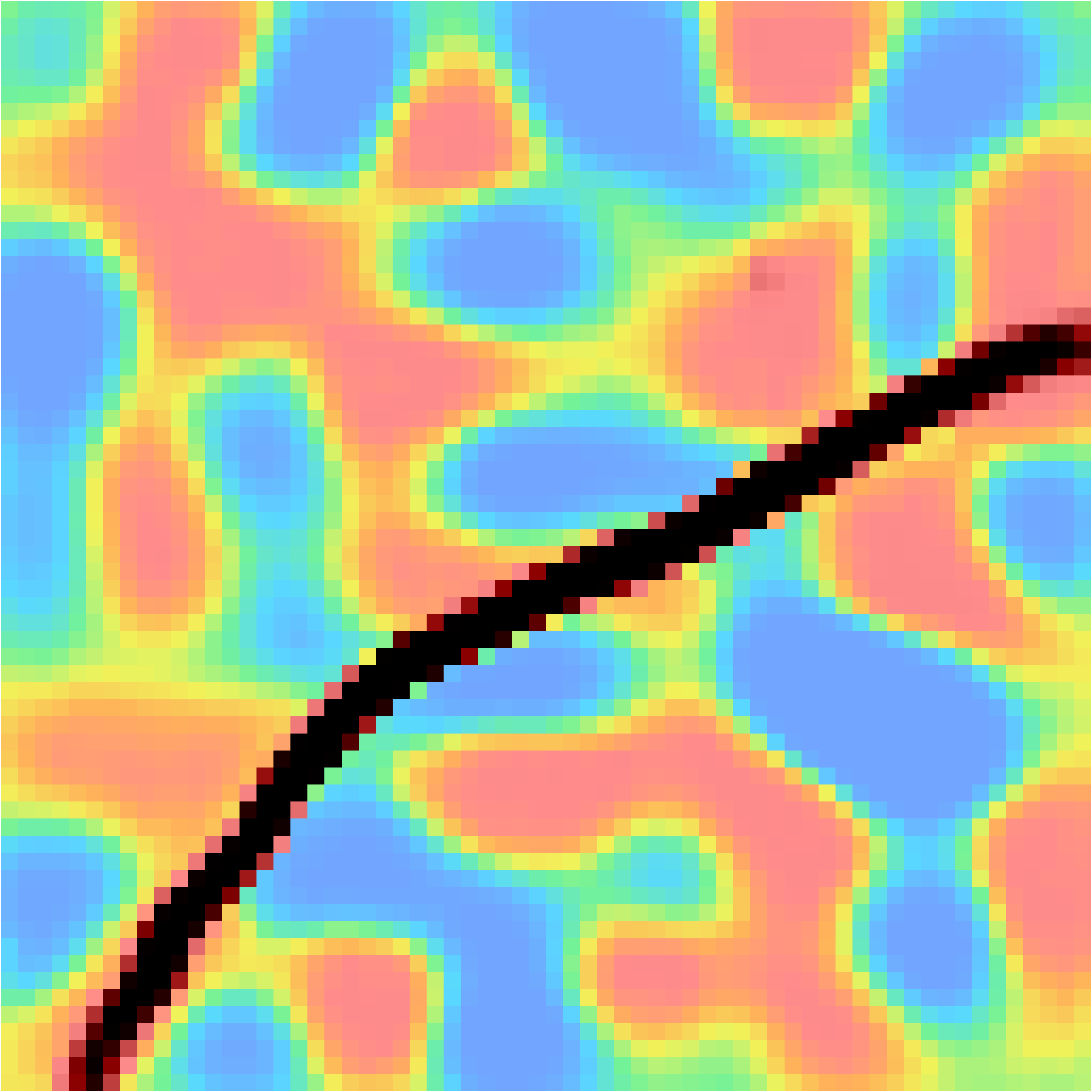}} &
        \includegraphics[height=4.6cm]{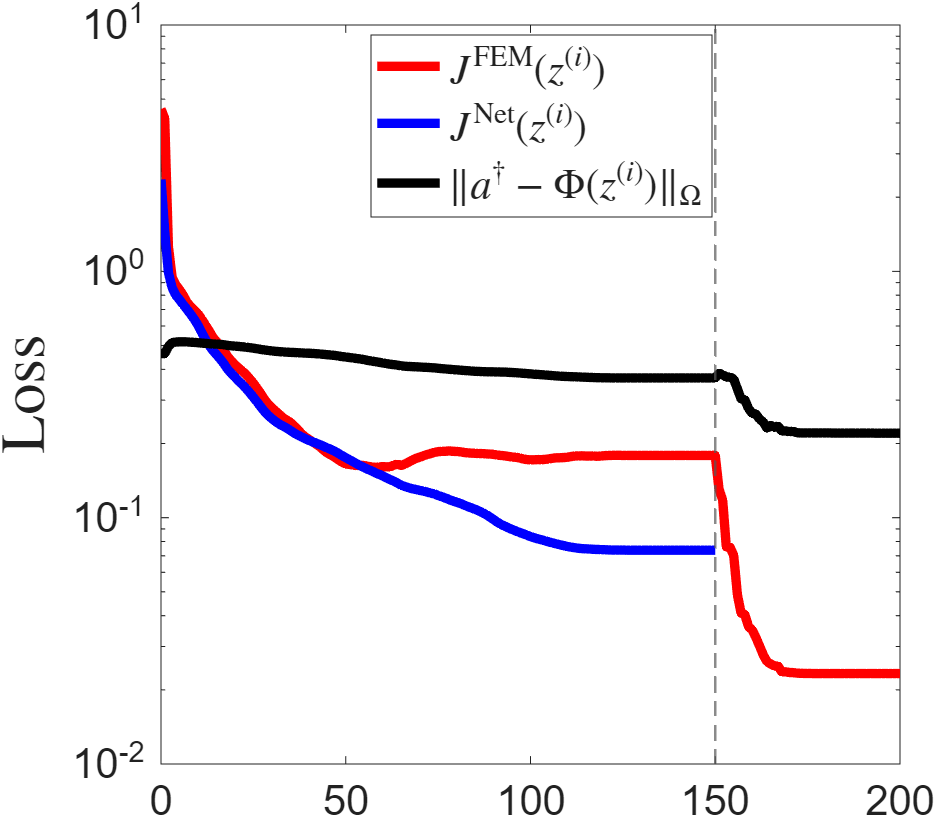}
        \\[0.5em]
        \includegraphics[height=4.5cm]{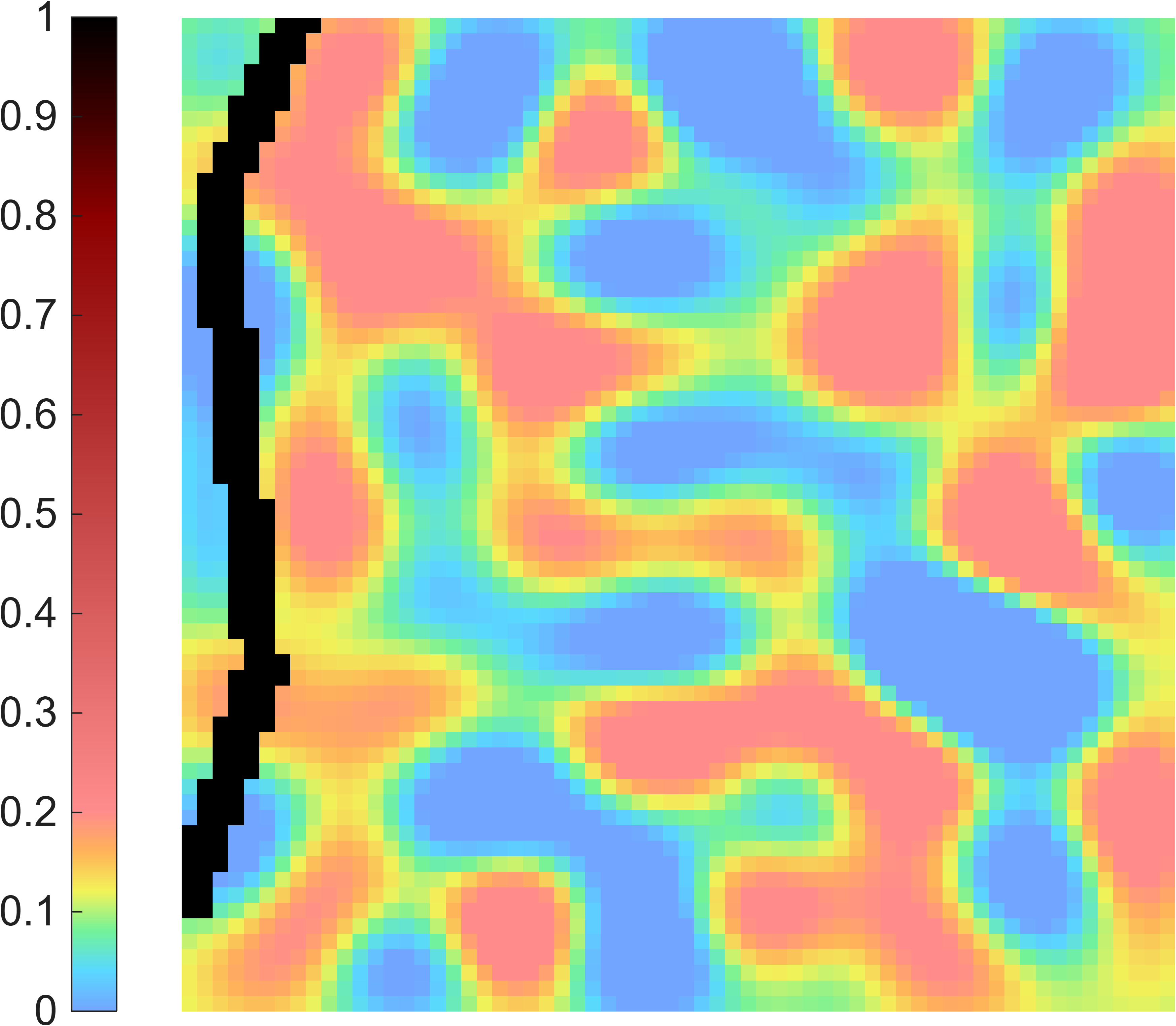} &
        \raisebox{0.07cm}{\includegraphics[height=4.35cm]{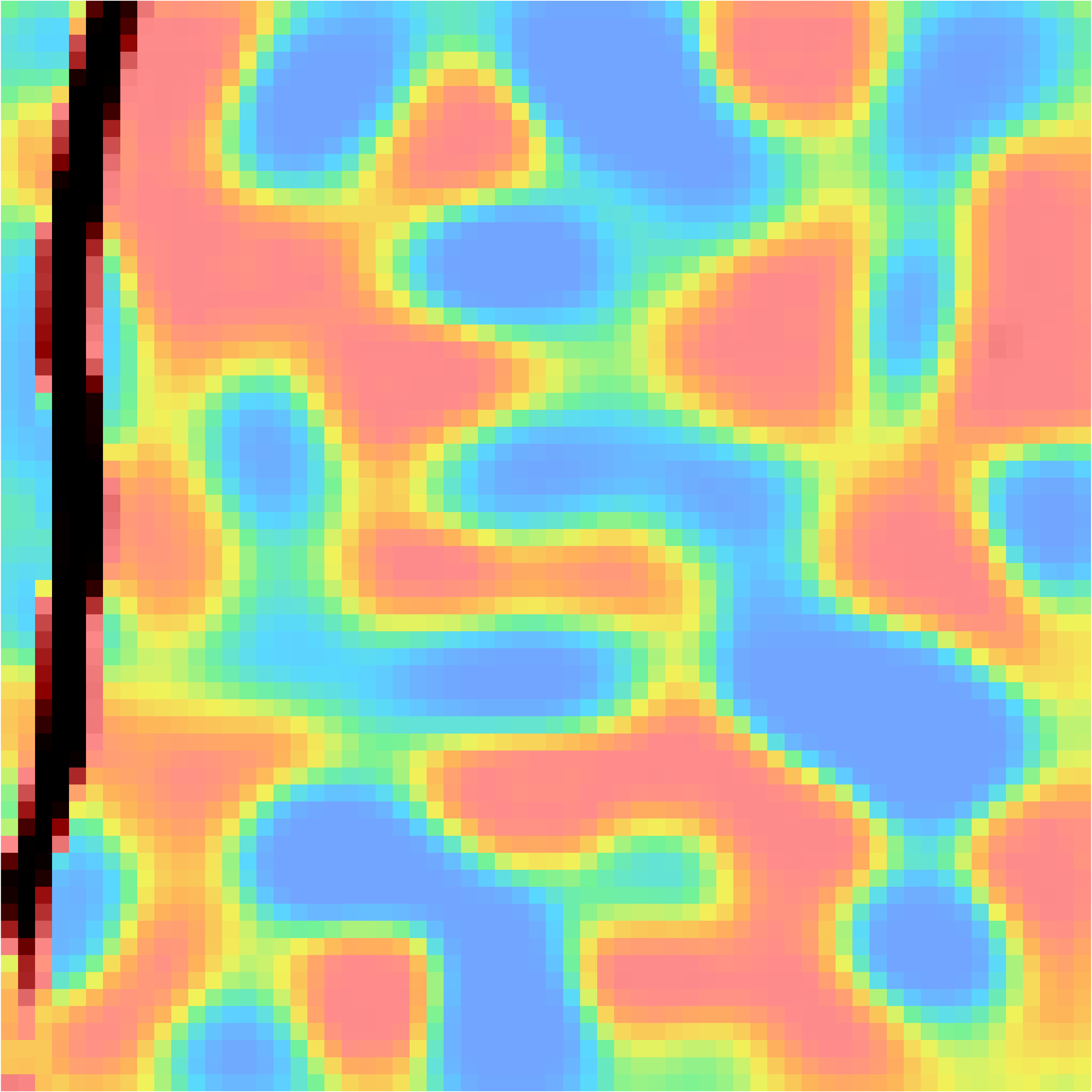}} &
        \includegraphics[height=4.6cm]{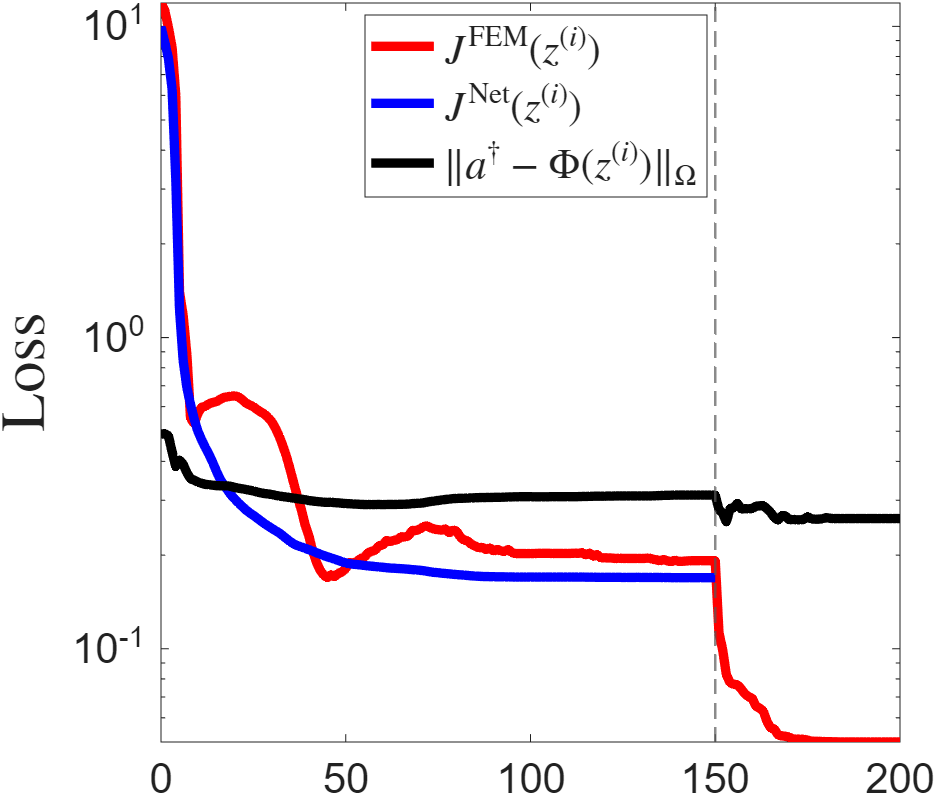}
        \\
        & & \small Iteration \(i\)
        \end{tabular}
        \caption{Refined reconstructions for four target coefficients from the crack model. \emph{Columns:} target coefficient \(a^\dagger\), final reconstruction \(a^{(200)}=\Phi(z^{(200)})\), and histories of \(J^{\mathrm{Net}}(z^{(i)})\), \(J^{\mathrm{FEM}}(z^{(i)})\), and \(\lVert a^\dagger-a^{(i)}\rVert_{L^2(\Omega)}\). The vertical dashed line at \(i=150\) marks the transition from surrogate-based gradient descent to FEM-based Newton-CG iterations.}
        \label{fig:cracks-refined-reconstruction}
    \end{figure}
    For the displayed targets, the loss curves in the third column of Figure~\ref{fig:cracks-refined-reconstruction} show that the Newton refinement yields a rapid decrease in the FEM objective. The accompanying reduction in the \(L^2(\Omega)\)-error shows that the refinement improves both the boundary-data fit and the coefficient reconstruction in these examples.

    \subsection{Computational Cost}
    \label{sec:numerics-computational-cost}
    
    We compare the cost of evaluating the neural surrogate and the finite element model, and report network training and reconstruction times. The GPU computations used an NVIDIA RTX PRO 2000 Blackwell Generation GPU.
    
    \paragraph{Forward Evaluation Time.}
    The reported times are averages over \(100\) evaluations. Each evaluation maps either a latent vector \(z\) or a coefficient field \(a\) to the boundary responses for \(L=10\) prescribed excitations.
    
    The surrogate evaluates the network on a latent vector. The complete FEM pipeline first decodes the latent vector into a coefficient field and then solves the finite element problems. We also measure FEM evaluation time with the coefficient field supplied directly, excluding decoder cost. This comparison uses the network trained on the crack dataset. Table~\ref{tab:forward-evaluation-times} reports the CPU and GPU evaluation times.
    
    \begin{table}
        \centering
        \begin{tabular}{@{}lcc@{}}
            \toprule
            Method & CPU & GPU \\
            \midrule
            Surrogate network
            & \(4.101\,\mathrm{ms}\)
            & \(2.547\,\mathrm{ms}\) \\
            Decoder + FEM
            & \(445.889\,\mathrm{ms}\)
            & \(26.117\,\mathrm{ms}\) \\
            FEM without decoder
            & \(438.631\,\mathrm{ms}\)
            & \(24.521\,\mathrm{ms}\) \\
            \bottomrule
        \end{tabular}
        \caption{Average CPU and GPU wall-clock times per forward evaluation, computed from \(100\) evaluations. Each evaluation produces the boundary responses for \(L=10\) boundary excitations.}
        \label{tab:forward-evaluation-times}
    \end{table}
    
    In this implementation, the surrogate evaluation is approximately \(109\) times faster than the complete decoder--FEM pipeline on the CPU. On the GPU, the corresponding speedup is approximately a factor of \(10\). The measured cost of evaluating the decoder is small compared with the subsequent FEM solve. Thus, for this implementation and discretization, the finite element solves dominate the cost of the physics-based forward map.

    \paragraph{Offline Training.}
    The computational savings obtained from the surrogate require an initial offline stage in which the VAE and surrogate network are trained. For the crack experiment, the training times are reported in Table~\ref{tab:training-times}. The surrogate dataset contained \(5\,000\) generated pairs, of which \(4\,500\) were used for training and \(500\) for validation; training required approximately \(9\) minutes and \(32\) seconds. The VAE used \(9\,000\) training fields and \(1\,000\) validation fields and required approximately \(2\) hours and \(34\) minutes. Both networks were trained on the GPU.
    
    \begin{table} 
        \centering 
        \begin{tabular}{@{}lccc@{}}
        \toprule
        Model & Dataset split & Number of epochs & Training time \\
        \midrule
        Surrogate network & \(4\,500+500\) & \(2\,000\) & \(9\,\mathrm{min}\ 32\,\mathrm{s}\) \\
        VAE & \(9\,000+1\,000\) & \(10\,000\) & \(2\,\mathrm{h}\ 34\,\mathrm{min}\) \\
        \bottomrule
        \end{tabular}
         \caption{Training times for the surrogate network and VAE used in the crack experiment. The dataset split is reported as training plus validation samples. All training was performed on the GPU.}
         \label{tab:training-times} 
        \end{table}

    The longer VAE training time reflects, among other factors, its larger number of epochs and more complex architecture. We have not systematically studied the number of epochs required for a prescribed reconstruction quality. The VAE loss function also changes as the KL-divergence weight is annealed, whereas the surrogate loss function remains fixed throughout training.

    \paragraph{Reconstruction Time.}
    We finally report the wall-clock times for the inverse reconstruction procedure on the GPU. Since the number of line-search steps and the behavior of the optimization depend on both the target coefficient and the initialization, these timings should be interpreted as representative values rather than fixed costs.
    
    In the representative surrogate-only run, \(100\) gradient descent iterations with Armijo line search require approximately \(5\) seconds after offline training.
    
    For the refined reconstruction pipeline, we first perform \(150\) iterations using the surrogate objective, followed by \(50\) Newton-CG iterations using the full FEM objective. In a representative run, the surrogate stage required approximately \(8\) seconds, whereas the subsequent FEM-based Newton-CG refinement required approximately \(132\) seconds, giving a total reconstruction time of approximately \(140\) seconds. Thus, in the refined pipeline, only a small fraction of the total computational time is spent in the surrogate-based initialization, while the FEM-based refinement accounts for the dominant part of the runtime. The reconstruction times are summarized in Table~\ref{tab:reconstruction-times}.
    
    \begin{table}
        \centering
        \small
        \begin{tabular}{@{}lcccc@{}}
            \toprule
            \begin{tabular}{@{}c@{}}
                Reconstruction \\
                method
            \end{tabular}
            &
            \begin{tabular}{@{}c@{}}
                Number of \\
                iterations
            \end{tabular}
            &
            \begin{tabular}{@{}c@{}}
                Surrogate \\
                stage
            \end{tabular}
            &
            \begin{tabular}{@{}c@{}}
                FEM \\
                refinement
            \end{tabular}
            &
            \begin{tabular}{@{}c@{}}
                Total \\
                time
            \end{tabular}
            \\
            \midrule
            Surrogate only
            & \(100\) GD
            & \(5\,\mathrm{s}\)
            & --
            & \(5\,\mathrm{s}\)
            \\
            
            Refined pipeline
            & \(150\) GD \(+\) \(50\) Newton-CG
            & \(8\,\mathrm{s}\)
            & \(132\,\mathrm{s}\)
            & \(140\,\mathrm{s}\)
            \\
            \bottomrule
        \end{tabular}
        \caption{Representative GPU wall-clock times for the two reconstruction procedures.
        Here, GD denotes gradient descent with Armijo line search applied to the surrogate objective; the refined pipeline then uses FEM-based Newton-CG iterations. The reported times
        depend on the target coefficient and the initial latent vector, and should
        therefore be interpreted as representative values.}
        \label{tab:reconstruction-times}
    \end{table}

    The timing results illustrate the lower forward-evaluation cost of the surrogate in this implementation. Constructing the learned representation and surrogate requires an offline investment, after which the reported surrogate-only reconstruction takes a few seconds. The refined pipeline uses the surrogate to obtain an initial reconstruction before applying the more expensive FEM model.

%% file: sec-conclusions-rev0.tex
\section{Conclusions}
\label{sec:conclusions}

We have analyzed latent reconstruction of a leading elliptic coefficient from
finitely many Neumann excitations and their full Dirichlet traces. If the
linearized boundary map is injective on the \(m\)-dimensional latent tangent
space, at most \(m\) excitations can be selected to obtain local Lipschitz
stability. Convergence of the finite element coefficient sensitivities
preserves this stability on sufficiently fine meshes, with a stability
constant and neighborhood independent of the mesh size. The argument also applies to
the boundary \(L^2\)-norm.

The surrogate analysis distinguishes two approximation requirements.
Uniform value accuracy gives a coefficient-error estimate in terms of
representation error, data noise, finite element error, and surrogate error
for reconstructions satisfying residual comparison in the local stability set.
Representation error is measured relative to a comparison coefficient in that set.
Derivative accuracy further ensures that the surrogate itself remains locally injective and Lipschitz
stable. The results are local, and the selected tests and stability neighborhood
may depend on the reference coefficient. The analysis does not provide a
convergence guarantee for the numerical optimizer.

The numerical experiments illustrate the framework on analytical and learned
families of elliptic inclusions and crack-like coefficients. For the displayed
targets, latent optimization recovers the principal geometric features, and
FEM-based refinement in the same latent space further reduces the boundary
misfit and coefficient error. In this implementation, the trained surrogate
is substantially cheaper to evaluate than the finite element model. These two-dimensional
experiments illustrate the computational method; they do not verify the
separation or surrogate approximation hypotheses. In particular, the moving
binary-interface and ReLU representations fall outside the global \(C^1\)
assumptions.

%% file: sec-appendix-rev0.tex
\appendix

\section{CGO Verification of the Separation Condition}
\label{sec:cgo-verification-of-the-separation-condition}

In the continuous injectivity argument, the richness of the admissible boundary experiments enters through the separation condition \eqref{eq:separation-condition-tangent-space}. The next proposition verifies this condition in dimensions \(d\ge 3\) using standard complex geometrical optics (CGO) solutions, following \cite{SU87}. The corresponding Lipschitz-boundary setting was considered in \cite{Ale90}; for the two-dimensional uniqueness result, see \cite{Nachman1996}. The proposition supplies an independent sufficient criterion for the abstract separation condition. Its smoothness and dimensional assumptions are not imposed on the two-dimensional numerical experiments.

\begin{prop}[CGO verification of the separation condition]
\label{prop:cgo-verification-of-the-separation-condition}
Assume that \(d\ge 3\), that \(\Omega\subset\mathbb R^d\) is bounded, connected, and has smooth boundary, and that
\begin{equation}
a^\dagger\in C^\infty(\overline\Omega),
\qquad
0<c_0\le a^\dagger(x)
\quad \forall x\in\overline\Omega
\label{eq:cgo-reference-coefficient-assumptions}
\end{equation}
Assume also that the admissible Neumann test class is the full compatible class,
\begin{equation}
\mathcal G=H^{-1/2}_\diamond(\partial\Omega)
\label{eq:cgo-full-neumann-test-class}
\end{equation}
Let \(\mathcal U(a^\dagger)\) be defined by \eqref{eq:background-state-family}. If \(\delta a\in L^\infty(\Omega)\) satisfies
\begin{equation}
\int_\Omega \delta a\,\nabla u\cdot\nabla v\,dx=0
\quad \forall u,v\in \mathcal U(a^\dagger)
\label{eq:cgo-orthogonality-assumption}
\end{equation}
then
\begin{equation}
\delta a=0
\quad \text{a.e. in } \Omega
\label{eq:cgo-orthogonality-conclusion}
\end{equation}
Consequently, \eqref{eq:separation-condition-tangent-space} holds for every latent tangent space \(T^\Phi_{z^\dagger}\subset L^\infty(\Omega)\) associated with \(a^\dagger=\Phi(z^\dagger)\).
\end{prop}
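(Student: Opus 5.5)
The plan is to follow the Sylvester--Uhlmann reduction \cite{SU87}. First, I will turn \eqref{eq:cgo-orthogonality-assumption} into the statement that \(\delta a\) annihilates products of gradients of \emph{all} \(H^1\) solutions of \(\nabla\cdot(a^\dagger\nabla u)=0\) in \(\Omega\). Since \(\mathcal G=H^{-1/2}_\diamond(\partial\Omega)\), every such solution \(u\in H^1(\Omega)\) has conormal derivative \(a^\dagger\partial_n u\in H^{-1/2}(\partial\Omega)\). This conormal derivative has zero mean by the divergence theorem, so \(u\) coincides, up to an additive constant, with \(u(a^\dagger;a^\dagger\partial_n u)\in\mathcal U(a^\dagger)\). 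Constants do not affect gradients, so the orthogonality extends to all real solutions. By bilinearity it then extends to complex-valued solutions, with real and imaginary parts treated separately.

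Second, I will use the Liouville substitution. If \(\tilde u\) solves \((-\Delta+q)\tilde u=0\) with \(q=\Delta\sqrt{a^\dagger}/\sqrt{a^\dagger}\in C^\infty(\overline\Omega)\), then \(u=(a^\dagger)^{-1/2}\tilde u\) solves the conductivity equation. Extend \(a^\dagger\) smoothly to \(\mathbb R^d\) so that it is constant outside a ball, which makes \(q\) compactly supported. For \(k\in\mathbb R^d\) and \(d\ge3\), choose \(\zeta_1,\zeta_2\in\mathbb C^d\) with \(\zeta_j\cdot\zeta_j=0\), \(\zeta_1+\zeta_2=-ik\), and \(|\zeta_j|\to\infty\) along a family parametrized by \(s\to\infty\). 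The third dimension is used exactly here: it lets us take \(\zeta_1=-ik/2+s\omega_1+i\tau\omega_2\) with \(\omega_1,\omega_2,k\) mutually orthogonal and \(\tau^2=s^2+|k|^2/4\). The Sylvester--Uhlmann estimates then give solutions \(\tilde u_j=e^{x\cdot\zeta_j}(1+\psi_j)\) with \(\|\psi_j\|_{L^2(\Omega)}=O(1/|\zeta_j|)\) and \(\|\nabla\psi_j\|_{L^2(\Omega)}=O(1)\). By elliptic regularity on a slightly larger domain, one can even get \(\|\psi_j\|_{H^1(\Omega)}\) small relative to \(|\zeta_j|\).

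Third, I will compute \(\nabla u_1\cdot\nabla u_2\) for \(u_j=(a^\dagger)^{-1/2}\tilde u_j\). The leading term is \(e^{-ik\cdot x}(a^\dagger)^{-1}\zeta_1\cdot\zeta_2\), with \(\zeta_1\cdot\zeta_2=-|k|^2/2-\zeta_1\cdot\zeta_1\ldots\); more precisely, \(\zeta_1\cdot\zeta_2=-(|\zeta_1+\zeta_2|^2\text{-type})\) is of size \(s^2\). I therefore divide the orthogonality identity by \(\zeta_1\cdot\zeta_2\sim -2s^2\) and let \(s\to\infty\). The remaining terms involve \(\nabla\psi_j\), \(\psi_j\), and derivatives of \((a^\dagger)^{-1/2}\), multiplied by at most one factor of \(\zeta\), and they need to vanish after this normalization.

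This is the main obstacle. Naively, the cross term \(\zeta_1\cdot\nabla\psi_2\) is only \(O(s)\cdot O(1)\) in \(L^1\), and after dividing by \(s^2\) it is \(O(1/s)\), which is fine. The terms \(\nabla\psi_1\cdot\nabla\psi_2\) are \(O(1)\), also fine after division. So the proof reduces to the \(H^1\) bound \(\|\nabla\psi_j\|_{L^2}=O(1)\), which I would take from the weighted-space estimates of \cite{SU87} (\(\|\psi\|_{H^1_\delta}\lesssim 1\)), using \(\delta a\in L^\infty\). The limit then yields \(\int_\Omega \delta a\,(a^\dagger)^{-1}e^{-ik\cdot x}\,dx=0\) for all \(k\). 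Fourier uniqueness for the zero extension of the \(L^1\) function \(\delta a/a^\dagger\) gives \(\delta a=0\) a.e., and the consequence for \(T^\Phi_{z^\dagger}\) is immediate.
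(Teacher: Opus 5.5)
Your reduction to arbitrary complex solutions and your use of CGO phases in $d\ge3$ are fine. The argument breaks at the normalization step.

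In $\nabla u_1\cdot\nabla u_2$ the dot product is bilinear, with no conjugation. The conditions $\zeta_j\cdot\zeta_j=0$ and $\zeta_1+\zeta_2=-ik$ then give $2\,\zeta_1\cdot\zeta_2=(\zeta_1+\zeta_2)\cdot(\zeta_1+\zeta_2)=-|k|^2$. So $\zeta_1\cdot\zeta_2=-|k|^2/2$ is bounded; it is $\zeta_1\cdot\overline{\zeta_2}$ that behaves like $-2s^2$. (Separately, your $\zeta_1$ is null only if $\tau^2=s^2-|k|^2/4$, not $s^2+|k|^2/4$.) There is therefore nothing to divide by, and the terms you meant to discard are as large as the leading term or larger:
\begin{itemize}
\item With $b=\tfrac12\log a^\dagger$, the terms $-\zeta_j\cdot\nabla b$, each of size $s$, combine to $ik\cdot\nabla b$, which is $O(1)$.
\item The cross term $\zeta_1\cdot\nabla\psi_2$ is only bounded by $O(s)$ under the $H^1_\delta$ estimate you quote.
\item $\nabla\psi_1\cdot\nabla\psi_2$, which you bound by $O(1)$, is no longer negligible.
\item Even with the sharper bounds $\|\psi_j\|_{H^m_\delta}\lesssim s^{-1}$ available for smooth $q$, the equation $\Delta\psi_2+2\zeta_2\cdot\nabla\psi_2=q(1+\psi_2)$ together with $\zeta_1=-ik-\zeta_2$ gives $\zeta_1\cdot\nabla\psi_2\to-q/2$, not $0$.
\end{itemize}
Keeping all terms and using $q=\Delta b+|\nabla b|^2$, the correct limit identity is, with $\beta=\delta a/a^\dagger$,
\[
\int_\Omega \beta\,e^{-ik\cdot x}\Bigl(-\tfrac12|k|^2+ik\cdot\nabla b-\Delta b\Bigr)\,dx=0.
\]
It is not $\int_\Omega\delta a\,(a^\dagger)^{-1}e^{-ik\cdot x}\,dx=0$.

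Two ideas are missing.

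First, the paper avoids the gradient cross terms entirely through an exact identity. Let $\beta_0$ be the zero extension of $\beta$ and $T_\beta=\tfrac12(\Delta\beta_0+2\nabla b\cdot\nabla\beta_0)$, a compactly supported distribution of order two. Then
\[
\int_\Omega\delta a\,\nabla u_1\cdot\nabla u_2\,dx=\langle T_\beta,\tilde u_1\tilde u_2\rangle ,
\]
so only the product $\tilde u_1\tilde u_2=e^{-ik\cdot x}(1+\psi_1)(1+\psi_2)$ is tested. Passing to the limit requires $\psi_j\to0$ in $C^2$ on a neighborhood of $\overline\Omega$. The paper gets this from $H^m_\delta$ estimates with $m>d/2+2$, which are available because $q\in C_c^\infty$; the $H^1_\delta$ bound is not enough.

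Second, the limit only yields $\widehat{T_\beta}\equiv0$, that is, $\nabla\cdot(a^\dagger\nabla\beta_0)=0$ in $\mathcal D'(\mathbb R^d)$ with $a^\dagger$ extended smoothly and positively. When $a^\dagger$ is nonconstant, Fourier uniqueness does not give $\beta_0=0$ directly. You need hypoellipticity, then the energy identity together with compact support, as in the paper, to conclude $\beta_0=0$. Your closing Fourier-uniqueness step is correct only for constant $a^\dagger$. There $b$ is constant, $q=0$, $\psi_j=0$, and the identity reduces to $-\tfrac12|k|^2\widehat{\beta_0}(k)=0$.
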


\begin{proof}
\proofparagraph{Liouville Transform.} Set
\begin{equation}
\gamma=a^\dagger,
\qquad
s=\gamma^{1/2},
\qquad
b=\log s,
\qquad
\beta=\delta a/\gamma
\label{eq:cgo-liouville-quantities}
\end{equation}
We extend \(\gamma\) to a smooth positive function on \(\mathbb R^d\), still denoted by \(\gamma\), such that
\begin{equation}
\gamma=1
\quad \text{outside a ball } B\supset\overline\Omega
\label{eq:cgo-extension-normalization}
\end{equation}
After decreasing \(c_0\) if necessary, we may also assume that
\(\gamma\geq c_0>0\) throughout \(\mathbb R^d\).
The corresponding Schrödinger potential is
\begin{equation}
q=s^{-1}\Delta s=\Delta b+|\nabla b|^2
\label{eq:cgo-potential-q}
\end{equation}
and hence \(q\in C_c^\infty(\mathbb R^d)\).
Let \(u=s^{-1}v\). A direct computation gives
\begin{align}
\nabla\cdot(\gamma\nabla u)
&=\nabla\cdot\bigl(s^2\nabla(s^{-1}v)\bigr)
\label{eq:cgo-liouville-divergence-start}\\
&=s\Delta v-v\Delta s
\label{eq:cgo-liouville-divergence-middle}\\
&=s(\Delta v-qv)
\label{eq:cgo-liouville-divergence-end}
\end{align}
Thus \(u\) solves the conductivity equation if and only if \(v=su\) solves
\begin{equation}
(-\Delta+q)v=0
\label{eq:cgo-schroedinger-equation}
\end{equation}
This is the Liouville form used below.

\proofparagraph{The Distribution Associated with the Perturbation.}
Extend \(\beta\) by zero outside \(\Omega\) and define
\begin{equation}
\beta_0=\mathbf 1_\Omega\beta
\label{eq:cgo-beta-zero-definition}
\end{equation}
We introduce the compactly supported distribution
\begin{equation}
T_\beta=\frac12\bigl(\Delta\beta_0+2\nabla b\cdot\nabla\beta_0\bigr)
\quad \text{in } \mathcal D'(\mathbb R^d)
\label{eq:cgo-distribution-definition}
\end{equation}
Equivalently, for every \(\phi\in C_c^\infty(\mathbb R^d)\),
\begin{equation}
\langle T_\beta,\phi\rangle
=\frac12\int_\Omega \beta\bigl(\Delta\phi-2\nabla b\cdot\nabla\phi-2(\Delta b)\phi\bigr)\,dx
\label{eq:cgo-distribution-action}
\end{equation}
We use this form of the distributional definition in the product calculation below.

\proofparagraph{CGO Product Identity.}
Let \(v_1\) and \(v_2\) solve \eqref{eq:cgo-schroedinger-equation} in a neighborhood of \(\overline\Omega\), and set
\begin{equation}
u_j=s^{-1}v_j,
\quad j=1,2
\label{eq:cgo-conductivity-from-schroedinger}
\end{equation}
Then the Liouville transform gives
\begin{equation}
\nabla\cdot(\gamma\nabla u_j)=0
\quad \text{in } \Omega,\quad j=1,2
\label{eq:cgo-background-equations-from-cgo}
\end{equation}
We claim that
\begin{equation}
\langle T_\beta,v_1v_2\rangle
=\int_\Omega \delta a\,\nabla u_1\cdot\nabla u_2\,dx
\label{eq:cgo-product-identity}
\end{equation}
Indeed, applying \eqref{eq:cgo-distribution-action} with \(\phi=v_1v_2\) gives
\begin{equation}
\langle T_\beta,v_1v_2\rangle
=\frac12\int_\Omega \beta\bigl(\Delta(v_1v_2)-2\nabla b\cdot\nabla(v_1v_2)-2(\Delta b)v_1v_2\bigr)\,dx
\label{eq:cgo-action-on-product}
\end{equation}
Since \(v_j\) solve \eqref{eq:cgo-schroedinger-equation}, we have
\begin{equation}
\Delta(v_1v_2)=2qv_1v_2+2\nabla v_1\cdot\nabla v_2
\label{eq:cgo-laplacian-product}
\end{equation}
Using \eqref{eq:cgo-potential-q}, we obtain
\begin{align}
\langle T_\beta,v_1v_2\rangle
&=\int_\Omega \beta\Bigl(\nabla v_1\cdot\nabla v_2
-\nabla b\cdot(v_2\nabla v_1+v_1\nabla v_2)
+|\nabla b|^2v_1v_2\Bigr)\,dx
\label{eq:cgo-action-expanded}\\
&=\int_\Omega \beta(\nabla v_1-v_1\nabla b)\cdot(\nabla v_2-v_2\nabla b)\,dx
\label{eq:cgo-action-factorized}
\end{align}
On the other hand,
\begin{align}
\nabla u_j
&=e^{-b}(\nabla v_j-v_j\nabla b),
\quad j=1,2
\label{eq:cgo-gradient-u-j}\\
\delta a
&=e^{2b}\beta
\label{eq:cgo-delta-a-beta}
\end{align}
Therefore
\begin{align}
\int_\Omega \delta a\,\nabla u_1\cdot\nabla u_2\,dx
&=\int_\Omega \beta(\nabla v_1-v_1\nabla b)\cdot(\nabla v_2-v_2\nabla b)\,dx
\label{eq:cgo-energy-factorized}\\
&=\langle T_\beta,v_1v_2\rangle
\label{eq:cgo-energy-equals-distribution}
\end{align}
This proves \eqref{eq:cgo-product-identity}.

\proofparagraph{Admissibility of the CGO Boundary Data.}
Let \(v\) be any of the Schrödinger solutions used above and let \(u=s^{-1}v\). Its conormal trace is
\begin{equation}
g_N=\gamma\partial_n u|_{\partial\Omega}
\label{eq:cgo-neumann-trace}
\end{equation}
Since \(u\) solves the homogeneous conductivity equation, the compatibility condition follows from
\begin{equation}
\langle g_N,1\rangle_{\partial\Omega}
=\int_{\partial\Omega}\gamma\partial_n u\,dS
=\int_\Omega \nabla\cdot(\gamma\nabla u)\,dx
=0
\label{eq:cgo-boundary-compatibility}
\end{equation}
Thus \(g_N\in H^{-1/2}_\diamond(\partial\Omega)=\mathcal G\). Subtracting
the boundary mean
\(
|\partial\Omega|^{-1}\int_{\partial\Omega}u\,dS
\)
from \(u\) places the solution in \(V\) and leaves its gradient unchanged.
Hence the CGO-generated conductivity solutions are admissible elements of
\(\mathcal U(a^\dagger)\). Although the CGO solutions are complex-valued, the bilinear
identity follows by applying the real identity to real and imaginary parts.
Therefore \eqref{eq:cgo-orthogonality-assumption} and
\eqref{eq:cgo-product-identity} imply
\begin{equation}
\langle T_\beta,v_1v_2\rangle=0
\label{eq:cgo-product-pairing-vanishes}
\end{equation}
for all CGO pairs \(v_1,v_2\) constructed below.

\proofparagraph{Passage to Fourier Modes.}
Fix \(k\in\mathbb R^d\setminus\{0\}\). Since \(d\ge 3\), choose \(\theta,\eta\in\mathbb R^d\) such that
\begin{equation}
|\theta|=|\eta|=1,
\qquad
\theta\cdot\eta=0,
\qquad
\theta\cdot k=0,
\qquad
\eta\cdot k=0
\label{eq:cgo-theta-eta-choice}
\end{equation}
For \(\tau>|k|/2\), define
\begin{equation}
\lambda_\tau=\bigl(\tau^2-|k|^2/4\bigr)^{1/2}
\label{eq:cgo-lambda-tau}
\end{equation}
and
\begin{equation}
\rho_1
=\tau\theta+i\bigl(k/2+\lambda_\tau\eta\bigr),
\qquad
\rho_2
=-\tau\theta+i\bigl(k/2-\lambda_\tau\eta\bigr)
\label{eq:cgo-rho-definitions}
\end{equation}
Then
\begin{align}
\rho_1+\rho_2
&=ik
\label{eq:cgo-rho-sum}\\
\rho_j\cdot\rho_j
&=0,
\quad j=1,2
\label{eq:cgo-rho-null}
\end{align}
The standard CGO construction for the smooth compactly supported potential \(q\) gives solutions of \eqref{eq:cgo-schroedinger-equation} of the form
\begin{equation}
v_j(x)=e^{\rho_j\cdot x}\bigl(1+r_j(x)\bigr),
\quad j=1,2
\label{eq:cgo-solutions}
\end{equation}
To justify the regularity needed below, fix \(-1<\delta<0\) and let
\(G_\rho\) denote the Faddeev inverse of \(-\Delta-2\rho\cdot\nabla\).
We use the standard weighted Sobolev norm
\begin{equation}
\|f\|_{H^m_\sigma}^2
=\sum_{|\alpha|\leq m}\|\langle x\rangle^\sigma D^\alpha f\|_{L^2}^2
\label{eq:cgo-weighted-sobolev-norm}
\end{equation}
The weighted \(L^2\) estimate in \cite[Corollary~2.2]{SU87}, applied after
commuting \(G_\rho\) with derivatives, gives, for every integer \(m\geq0\),
\begin{equation}
\|G_\rho f\|_{H^m_\delta(\mathbb R^d)}
\leq \frac{C_m}{|\rho|}\|f\|_{H^m_{\delta+1}(\mathbb R^d)}
\label{eq:cgo-faddeev-weighted-estimate}
\end{equation}
The remainders satisfy \(r_j=-G_{\rho_j}\bigl(q(1+r_j)\bigr)\).
Since \(q\in C_c^\infty(\mathbb R^d)\), multiplication by \(q\) maps
\(H^m_\delta\) boundedly into \(H^m_{\delta+1}\). Using
\(|\rho_j|=\sqrt{2}\tau\), the usual contraction argument applied to
\eqref{eq:cgo-faddeev-weighted-estimate} gives, for sufficiently large \(\tau\),
\begin{equation}
\|r_j\|_{H^m_\delta(\mathbb R^d)}\leq C_m\tau^{-1},
\quad j=1,2
\label{eq:cgo-remainder-weighted-estimate}
\end{equation}
Choosing \(m>d/2+2\) and using local Sobolev embedding on \(B\), we obtain
\begin{equation}
\|r_j\|_{C^2(B)}\to0
\quad \text{as } \tau\to\infty,\quad j=1,2
\label{eq:cgo-remainder-convergence}
\end{equation}
Consequently,
\begin{align}
v_1(x)v_2(x)
&=e^{(\rho_1+\rho_2)\cdot x}\bigl(1+r_1(x)\bigr)\bigl(1+r_2(x)\bigr)
\label{eq:cgo-product-before-sum}\\
&=e^{ik\cdot x}\bigl(1+r_1(x)\bigr)\bigl(1+r_2(x)\bigr)
\label{eq:cgo-product-after-sum}
\end{align}
Hence
\begin{equation}
\|v_1v_2-e^{ik\cdot x}\|_{C^2(B)}\to0
\quad \text{as } \tau\to\infty
\label{eq:cgo-product-convergence}
\end{equation}
Since \(T_\beta\) is compactly supported in \(\overline\Omega\) and has order at most two, \eqref{eq:cgo-product-convergence} gives
\begin{equation}
\langle T_\beta,v_1v_2\rangle\to \langle T_\beta,e^{ik\cdot x}\rangle
\quad \text{as } \tau\to\infty
\label{eq:cgo-pairing-convergence}
\end{equation}
Combining this with \eqref{eq:cgo-product-pairing-vanishes}, we obtain
\begin{equation}
\langle T_\beta,e^{ik\cdot x}\rangle=0
\quad \forall k\in\mathbb R^d\setminus\{0\}
\label{eq:cgo-fourier-nonzero-modes}
\end{equation}
Since \(T_\beta\) has compact support, its Fourier transform is continuous, and therefore
\begin{equation}
\widehat{T_\beta}(k)=0
\quad \forall k\in\mathbb R^d
\label{eq:cgo-fourier-transform-zero}
\end{equation}
It follows that
\begin{equation}
T_\beta=0
\quad \text{in } \mathcal D'(\mathbb R^d)
\label{eq:cgo-distribution-zero}
\end{equation}
This is the distributional information extracted from the CGO products.

\proofparagraph{Conclusion.}
By \eqref{eq:cgo-distribution-definition} and \eqref{eq:cgo-distribution-zero},
\begin{equation}
\Delta\beta_0+2\nabla b\cdot\nabla\beta_0=0
\quad \text{in } \mathcal D'(\mathbb R^d)
\label{eq:cgo-beta-equation-nondivergence}
\end{equation}
Since \(\gamma=e^{2b}\), this is equivalent to the divergence-form equation
\begin{equation}
\nabla\cdot(\gamma\nabla\beta_0)=0
\quad \text{in } \mathcal D'(\mathbb R^d)
\label{eq:cgo-beta-equation-divergence}
\end{equation}
Elliptic hypoellipticity for this smooth uniformly elliptic operator gives
\(\beta_0\in C^\infty(\mathbb R^d)\). Since \(\beta_0\) has compact support,
testing \eqref{eq:cgo-beta-equation-divergence} with \(\beta_0\) yields
\begin{equation}
0
=\int_{\mathbb R^d}\beta_0\nabla\cdot(\gamma\nabla\beta_0)\,dx
=-\int_{\mathbb R^d}\gamma|\nabla\beta_0|^2\,dx
\label{eq:cgo-energy-zero-end}
\end{equation}
Because \(\gamma\ge c_0>0\), \eqref{eq:cgo-energy-zero-end} implies
\begin{equation}
\nabla\beta_0=0
\quad \text{in } \mathbb R^d
\label{eq:cgo-gradient-beta-zero}
\end{equation}
Since \(\beta_0\) has compact support, this constant must be zero:
\begin{equation}
\beta_0=0
\quad \text{in } \mathbb R^d
\label{eq:cgo-beta-zero}
\end{equation}
Finally, \(\delta a=\gamma\beta\) in \(\Omega\), and therefore
\begin{equation}
\delta a=0
\quad \text{a.e. in } \Omega
\label{eq:cgo-delta-a-zero-final}
\end{equation}
This proves \eqref{eq:cgo-orthogonality-conclusion}, and hence the separation condition \eqref{eq:separation-condition-tangent-space}.
\end{proof}

\begin{rem}[Admissible subclasses]
\label{rem:cgo-admissible-subclasses}
The proof only uses that \(\mathcal G\) contains the real and imaginary parts
of the conormal traces of the conductivity solutions obtained from the CGO
solutions. Hence the full-data
assumption~\eqref{eq:cgo-full-neumann-test-class} may be replaced by this
smaller richness condition.
\end{rem}